\documentclass[12pt,a4paper,reqno]{amsart}
\usepackage[english]{babel}
\usepackage[applemac]{inputenc}
\usepackage[T1]{fontenc}
\usepackage{palatino}
\usepackage{amsmath}
\usepackage{amssymb}
\usepackage{amsthm}
\usepackage{amsfonts}
\usepackage{graphicx}
\usepackage[colorlinks = true, citecolor = black]{hyperref}
\pagestyle{headings}
\author{Tuomas Orponen}\thanks{The research was supported by the Finnish Centre of Excellence in Analysis and Dynamics Research}
\title[Packing Dimension and Category of Exceptions]{On the Packing Dimension and Category of Exceptional Sets of Orthogonal Projections}
\address{Department of Mathematics and Statistics, University of Helsinki, P.O.B. 68, FI-00014 University of Helsinki, Finland}
\email{tuomas.orponen@helsinki.fi} 
\subjclass[2010]{28A78 (Primary); 28A80 (Secondary).}

\newcommand{\R}{\mathbb{R}}
\newcommand{\N}{\mathbb{N}}

\newcommand{\Z}{\mathbb{Z}}

\newcommand{\calP}{\mathcal{P}}

\newcommand{\calT}{\mathcal{T}}
\newcommand{\calI}{\mathcal{I}}
\newcommand{\calH}{\mathcal{H}}

\newcommand{\calG}{\mathcal{G}}
\newcommand{\calL}{\mathcal{L}}
\newcommand{\calQ}{\mathcal{Q}}

\newcommand{\spt}{\operatorname{spt}}
\newcommand{\card}{\operatorname{card}}

\newcommand{\p}{\mathbf{p}}
\newcommand{\B}{\operatorname{B}}
\newcommand{\MB}{\operatorname{MB}}
\newcommand{\Dim}{\operatorname{Dim}}
\newcommand{\prev}{\operatorname{prev}}
\newcommand{\m}{\mathfrak{m}}

\numberwithin{equation}{section}

\theoremstyle{plain}
\newtheorem{thm}[equation]{Theorem}
\newtheorem{lemma}[equation]{Lemma}

\newtheorem{cor}[equation]{Corollary}
\newtheorem{proposition}[equation]{Proposition}
\newtheorem{question}[equation]{Question}
\newtheorem{claim}[equation]{Claim}

\theoremstyle{definition}
\newtheorem{definition}[equation]{Definition}

\newtheorem{construction}[equation]{Construction}

\theoremstyle{remark}
\newtheorem{remark}[equation]{Remark}

\newtheorem{notation}[equation]{Notations}

\addtolength{\hoffset}{-1.15cm}
\addtolength{\textwidth}{2.3cm}
\addtolength{\voffset}{0.45cm}
\addtolength{\textheight}{-0.9cm}

\begin{document}

\begin{abstract} We consider several classical results related to the Hausdorff dimension of \emph{exceptional sets} of orthogonal projections and try to find out whether they have reasonable formulations in terms of packing dimension. We also investigate the existence of category versions for Marstrand and Falconer-Howroyd-type projection results.
\end{abstract}

\maketitle

\tableofcontents

\section{Introduction} Given a set $K \subset \R^{2}$, what is the relation between the Hausdorff or packing dimension of $K$, and the Hausdorff or packing dimension of the generic orthogonal projection $K_{e} = \{x \cdot e : x \in K\}$, for $e \in S^{1}$? This is one of the most classical and thoroughly studied questions in geometric measure theory. As early as 1954, J.M. Marstrand \cite{Mar} proved that Hausdorff dimension is generally preserved in projections. More precisely, if the Hausdorff dimension of $K$, denoted by $\dim K$, is at most one, then $\dim K_{e} = \dim K$ for almost every vector $e \in S^{1}$. In case $\dim K > 1$, the result fails for obvious reasons, but, instead, Marstrand proved that almost every projection has positive length. 

Things change radically when hypotheses on Hausdorff dimension are replaced by those on packing dimension, denoted by $\dim_{\p}$. A special case of a construction due to M. J\"arvenp\"a\"a \cite{Jar} yields for any $\gamma \in (0,2)$ a compact set $K$ in the plane such that $\dim_{\p} K = \gamma$, yet $\dim_{\p} K_{e} \leq 2\gamma/(2 + \gamma) < \gamma$ for every $e \in S^{1}$. A few years later, it was discovered by K. Falconer and J. Howroyd in \cite{FH} that the behavior seen here is essentially the worst possible: any analytic set $K \subset \R^{2}$ with $\dim_{\p} K = \gamma$ has $\dim_{\p} K_{e} \geq 2\gamma/(2 + \gamma)$ for almost every $e \in S^{1}$. Moreover, the function $e \mapsto \dim_{\p} K_{e}$ is almost surely constant.

So, there is nothing new about studying the dimensions of orthogonal projections. Neither is it news that the results of Marstrand and Falconer-Howroyd can be sharpened by examining the dimension of \emph{exceptional sets of projections}. Given $K \subset \R^{2}$, such a set is formed by the directions $e \in S^{1}$ where the 'expected' behavior of $\dim K_{e}$ or $\dim_{\p} K_{e}$ fails. An early result on the dimension of exceptional sets is a theorem of Kaufman \cite{Ka} from 1969 saying that if $K \subset \R^{2}$ is an analytic set, then
\begin{equation}\label{kaufman} \dim \{e \in S^{1} : \dim K_{e} \leq \sigma\} \leq \sigma, \qquad 0 \leq \sigma < \dim K.  \end{equation} 
In particular, it follows that $\dim \{e \in S^{1} : \dim K_{e} < \dim K\} \leq \dim K$, which is sharp according to an example of R. Kaufman and P. Mattila \cite{KM}. In a similar vein but with a completely different technique, J. Bourgain \cite[Theorem 4]{Bo} proved in 2010 that if a set $K \subset \R^{2}$ has Hausdorff dimension $\dim K > \alpha \in (0,2)$, then
\begin{equation}\label{bourgain} \dim \{e \in S^{1} : \dim K_{e} < \eta\} \leq \kappa(\alpha,\eta), \end{equation}
where $\kappa(\alpha,\eta) \to 0$ as $\eta \searrow \alpha/2$. In case $\dim K > 1$, we mention the sharp bound
\begin{displaymath} \dim \{e \in S^{1} : \calH^{1}(K_{e}) = 0\} \leq 2 - \dim K, \end{displaymath}
due to K. Falconer \cite{Fa}. The estimates of Kaufman and Falconer were generalized to a much richer class of 'projections' than merely orthogonal ones in an influential paper of Y. Peres and W. Schlag \cite{PSc} in 2000. In \cite{FH}, Falconer and Howroyd improve on their own 'almost all' results by estimating the Hausdorff dimension of the exceptional sets related to the conservation of packing dimension under orthogonal projections. The sharp bounds are unknown in this situation but, for example, their results imply that
\begin{equation}\label{falc} \dim \left\{e \in S^{1} : \dim_{\p} K_{e} < \frac{\dim_{\p} K}{1 + (1/\sigma - 1/2)\dim_{\p} K} \right\} \leq \sigma, \qquad 0 \leq \sigma \leq 1. \end{equation} 

All estimates cited above are formulated in terms of the Hausdorff dimension of the exceptional sets under consideration. The starting point of this paper is to investigate if similar bounds could be obtained in terms of packing dimension. Since $\dim B \leq \dim_{\p} B$ for any set $B \subset \R^{d}$, bounds for $\dim_{\p}$ can certainly be no lower than those for $\dim$. But, to begin with, it is not even clear if one can hope for \textbf{any} non-trivial estimates for the packing dimension of exceptional sets. The only existing result in any direction seems to be due to M. Rams \cite{Ra} from 2002. It is concerned with the dimensions of self-conformal fractals $\Lambda_{t} \subset \R^{d}$, which vary smoothly and \emph{transversally} (see \cite[Theorem 1.1]{Ra} and the references therein for the definitions) as the parameter $t$ takes values in some open subset $V \subset \R^{d}$. Rams proves that for every $u \in V$ there exists a number $s(u) \geq 0$ (defined in terms of Bowen's equation, equal to the similarity dimension of $\Lambda_{u}$ in case the conformal mappings are similitudes) such that
\begin{displaymath} \limsup_{r \to 0} \dim_{\p} \{t \in B(u,r) : \dim \Lambda_{t} \leq \sigma\} \leq \sigma, \qquad \sigma < \min\{d,s(u)\}. \end{displaymath}
In order to better connect Rams' result to orthogonal projections, let us formulate a special case, which follows immediately from the inequality above. If $K \subset \R^{2}$ is a self-similar set in the plane satisfying the strong separation condition and containing no rotations, then
\begin{equation}\label{rams} \dim_{\p} \{e \in S^{1} : \dim K_{e} \leq \sigma\} \leq \sigma, \qquad \sigma < \dim K. \end{equation} 
This is precisely Kaufman's bound \eqref{kaufman} with one $\dim$ replaced by $\dim_{\p}$! The content of our first result is that such an improvement for \eqref{kaufman} is not possible for general sets.
\begin{thm}\label{main} There exists a compact set $K \subset \R^{2}$ with $\calH^{1}(K) > 0$ such that $\dim K_{e} = 0$ in a dense $G_{\delta}$-set of directions $e$. 
\end{thm}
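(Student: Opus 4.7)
The plan is to build $K$ as a decreasing intersection $K = \bigcap_{n \geq 1} K_{n}$ of compact sets together with probability measures $\mu_{n}$ supported on $K_{n}$ satisfying a uniform Frostman bound $\mu_{n}(B(x,r)) \leq Cr$, with $C$ independent of $n$. Any weak-$*$ cluster point $\mu$ of $(\mu_{n})$ is then supported on $K$ and inherits the bound, so $\calH^{1}(K) > 0$ by the mass distribution principle. In parallel, we fix an enumeration $(e_{n})_{n \geq 1}$ of a countable dense subset $D \subset S^{1}$ in which each element of $D$ appears infinitely often, and arrange $\dim (K_{n})_{e_{n}} \leq 1/n$ at every step. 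Since $K \subset K_{n}$, the inclusion $K_{e} \subset (K_{n})_{e}$ combined with the infinite recurrence of each $e \in D$ in the list $(e_{n})$ forces $\dim K_{e} = 0$ for every $e \in D$.

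The upgrade from ``dense'' to ``dense $G_{\delta}$'' is automatic. For any compact $K \subset \R^{2}$,
\begin{displaymath}
\{e \in S^{1} : \dim K_{e} = 0\} \;=\; \bigcap_{s \in \Q_{>0},\, k \in \N}\{e \in S^{1} : \calH^{s}_{\infty}(K_{e}) < 1/k\},
\end{displaymath}
and each defining set is open because the map $e \mapsto K_{e}$ is continuous from $S^{1}$ into the Hausdorff metric space of compact subsets of $\R$, and $\calH^{s}_{\infty}$ is upper semicontinuous on that space.

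The heart of the argument is the inductive step. Fix rapidly decreasing scales $1 = r_{0} > r_{1} > r_{2} > \cdots \to 0$. Starting from $K_{0}$ the unit segment and $\mu_{0} = \calH^{1}|_{K_{0}}$, at stage $n$ each segment of $K_{n-1}$ (of common length $r_{n-1}$) is replaced by $N_{n} := \lceil (r_{n-1}/r_{n})^{1/n}\rceil$ child segments of length $r_{n}$ oriented in direction $e_{n}^{\perp}$; within each parent, the children are translates of a reference segment along direction $e_{n}$ placed inside a thickened neighborhood of the parent, in a Cantor-like pattern of Hausdorff dimension at most $1/n$. The measure $\mu_{n}$ is Hausdorff $1$-measure on the union of child segments, renormalized. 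The projection $(K_{n})_{e_{n}}$ is then covered by this Cantor pattern, of Hausdorff dimension at most $1/n$. A counting argument --- the ball $B(x,r)$ with $r \in [r_{n}, r_{n-1}]$ meets at most $\sim (r/r_{n})^{1/n}$ children (from the Cantor density in direction $e_{n}$ inside the single relevant parent at this scale), each contributing mass of order $C r/N_{n}$ --- yields $\mu_{n}(B(x,r)) \leq C r \cdot (r/r_{n-1})^{1/n} \leq C r$; for $r \geq r_{n-1}$ the bound is inherited directly from $\mu_{n-1}$.

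The main obstacle I anticipate is the geometric compatibility across iterations. At step $n$ the child segments are oriented in direction $e_{n}^{\perp}$, which differs from the parents' orientation $e_{n-1}^{\perp}$; fitting them inside a small neighborhood of the parent (so that $K_{n} \subset K_{n-1}$ in a usable sense) requires $r_{n}$ small enough relative to the thickness used to fatten the parent --- a constraint depending on the angular difference between successive directions and accumulating over steps. The construction should therefore work with thickened segments (``tubes'') rather than bare line segments, and $r_{n}$ must decrease fast enough (say $r_{n} \leq r_{n-1}^{n}$) to absorb both the Cantor branching $N_{n}$ and the successive angular rotations. Carrying this bookkeeping through --- so that the nested inclusions $K_{n+1} \subset K_{n}$, the projection dimension estimates, and the uniform Frostman bounds all hold simultaneously --- is the delicate technical core.
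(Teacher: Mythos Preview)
Your overall architecture --- nested construction with children aligned perpendicular to a prescribed dense sequence of directions, plus the observation that $\{e : \dim K_{e} = 0\}$ is automatically $G_{\delta}$ by upper semicontinuity of Hausdorff content --- is sound and is essentially the paper's strategy. But the Frostman bound you claim does not survive your choice of branching number, and this is a genuine gap, not a bookkeeping issue.

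With $N_{n} = \lceil (r_{n-1}/r_{n})^{1/n}\rceil$ children of length $r_{n}$ per parent of length $r_{n-1}$, the total $1$-dimensional content inside each parent drops from $r_{n-1}$ to $N_{n}r_{n} \asymp r_{n-1}^{1/n}r_{n}^{1-1/n} \ll r_{n-1}$. Equivalently, after $n$ steps the total number of segments is $M_{n} = \prod_{k \leq n} N_{k}$, each of mass $1/M_{n}$, and your bound $\mu_{n}(B(x,r_{n})) \leq Cr_{n}$ would force $M_{n} \gtrsim r_{n}^{-1}$; but $\log M_{n} = \sum_{k} \tfrac{1}{k}\log(r_{k-1}/r_{k})$ while $-\log r_{n} = \sum_{k}\log(r_{k-1}/r_{k})$, so $M_{n}r_{n} \to 0$ and the Frostman constant blows up. The specific step that fails is ``each contributing mass of order $Cr/N_{n}$'': a child contributes its full mass $\asymp r_{n-1}/N_{n}$, not $r/N_{n}$, once $r \geq r_{n}$, and redoing your arithmetic with $r_{n-1}$ in place of $r$ gives $\mu_{n}(B(x,r)) \lesssim r_{n-1}^{1-1/n}r^{1/n}$, which exceeds $Cr$ for $r < r_{n-1}$.

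The fix is to take $N_{n} \asymp r_{n-1}/r_{n}$, so that the sum of child diameters equals the parent diameter. The Cantor pattern is then unnecessary for the projection estimate: since all children are perpendicular to $e_{n}$, the projection $(K_{n})_{e_{n}}$ is already a finite set regardless of how the children are positioned. This is exactly what the paper does, except it uses balls rather than segments: inside each ball of diameter $d$ it places $q$ balls of diameter $d/q$ along the diameter perpendicular to $e_{n}$, so the projection is covered by one interval of length $d/q$ per parent ball and the sum of diameters is preserved. Working with balls rather than rotated segments also dissolves the ``geometric compatibility'' obstacle you flag --- the children sit inside the parent automatically, with no tube-fattening or angular bookkeeping needed.
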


Dense $G_{\delta}$-sets on $S^{1}$ always have packing dimension one, so this shows that (a) the exceptional set estimate \eqref{rams} cannot be stated for general sets, and (b) the bounds \eqref{bourgain} of Bourgain and \eqref{kaufman} of Kaufman cannot be formulated in terms of packing dimension. Next, we ask what happens if $\dim K_{e}$ is replaced by $\dim_{\p} K_{e}$, that is, can we obtain bounds for $\dim_{\p} \{e \in S^{1} : \dim_{\p} K_{e} \leq \sigma\}$? An example as dramatic as the one in Theorem \ref{main} is not possible now because of
\begin{proposition}\label{mainP} Let $K \subset \R^{2}$ be an analytic set with $\dim_{\p} K = s$, and let $e,\xi \in S^{1}$ be two linearly independent vectors. Then
\begin{displaymath} s \leq \dim_{\p} K_{e} + \dim_{\p} K_{\xi}. \end{displaymath}
In particular, 
\begin{displaymath} \card \left\{e \in S^{1} : \dim_{\p} K_{e} < \tfrac{s}{2}\right\} \leq 2. \end{displaymath}
\end{proposition}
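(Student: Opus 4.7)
The plan is to exploit a bi-Lipschitz coordinate change induced by $e,\xi$ together with the standard product formula for packing dimension. Since $e$ and $\xi$ are linearly independent, the linear map $T \colon \R^{2} \to \R^{2}$ defined by $T(x) = (x \cdot e, x \cdot \xi)$ is a linear isomorphism, and hence bi-Lipschitz. Packing dimension is invariant under bi-Lipschitz maps, so $\dim_{\p} T(K) = \dim_{\p} K = s$. By definition of $K_{e}$ and $K_{\xi}$ one has $T(K) \subset K_{e} \times K_{\xi}$, and therefore
\begin{displaymath} s = \dim_{\p} T(K) \leq \dim_{\p}(K_{e} \times K_{\xi}). \end{displaymath}

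The second ingredient is the product inequality
\begin{displaymath} \dim_{\p}(A \times B) \leq \dim_{\p} A + \dim_{\p} B, \end{displaymath}
valid for any bounded sets $A,B \subset \R^{n}$; countable stability of $\dim_{\p}$ reduces the situation to bounded $K$, and hence to bounded $K_{e}$ and $K_{\xi}$. I would deduce this product inequality from the characterisation $\dim_{\p} F = \inf\{\sup_{i} \overline{\dim}_{\BOX} F_{i} : F \subset \bigcup_{i=1}^{\infty} F_{i}\}$ of Tricot and Falconer, combined with the elementary bound $\overline{\dim}_{\BOX}(A_{i} \times B_{j}) \leq \overline{\dim}_{\BOX} A_{i} + \overline{\dim}_{\BOX} B_{j}$ for bounded factors. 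Indeed, given countable decompositions $A = \bigcup_{i} A_{i}$ and $B = \bigcup_{j} B_{j}$ nearly realising the two packing dimensions, the product decomposition $A \times B = \bigcup_{i,j}(A_{i} \times B_{j})$ witnesses
\begin{displaymath} \dim_{\p}(A \times B) \leq \sup_{i,j} \overline{\dim}_{\BOX}(A_{i} \times B_{j}) \leq \sup_{i}\overline{\dim}_{\BOX} A_{i} + \sup_{j}\overline{\dim}_{\BOX} B_{j}, \end{displaymath}
and letting the decompositions approach optimality yields the claim. Applying this with $A = K_{e}$ and $B = K_{\xi}$ gives $s \leq \dim_{\p} K_{e} + \dim_{\p} K_{\xi}$.

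For the cardinality statement I would argue by contradiction. If three distinct directions $e_{1},e_{2},e_{3} \in S^{1}$ all satisfied $\dim_{\p} K_{e_{i}} < s/2$, then at most one pair among them could be antipodal (for if $e_{1} = -e_{2}$ and $e_{1} = -e_{3}$, then $e_{2} = e_{3}$), so some pair $e_{i},e_{j}$ is linearly independent. The first part applied to this pair would then give $s \leq \dim_{\p} K_{e_{i}} + \dim_{\p} K_{e_{j}} < s$, a contradiction. The only substantive point of the whole argument is recording the correct product inequality for packing dimension; with it in hand, the rest is a one-line coordinate change followed by a short pigeonhole.
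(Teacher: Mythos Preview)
Your proof is correct. The paper does not actually supply its own proof of this proposition; it merely cites it as a special case of a result of J\"arvenp\"a\"a and observes that it ``may be viewed as a generalisation of the well-known inequality $\dim_{\p}(A \times B) \leq \dim_{\p} A + \dim_{\p} B$.'' Your argument via the bi-Lipschitz coordinate change $T(x) = (x \cdot e,\, x \cdot \xi)$, the inclusion $T(K) \subset K_{e} \times K_{\xi}$, and the product inequality is precisely the natural way to make that remark into a proof, and your pigeonhole for the cardinality statement is fine. Incidentally, the analyticity hypothesis plays no role in your argument, and indeed is not needed for this direction of the inequality.
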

This proposition is a special case of a result in \cite{Jar}; one may view it as a generalization of the well-known inequality $\dim_{\p}(A \times B) \leq \dim_{\p} A + \dim_{\p} B$ for the packing dimension of product sets, see \cite[Theorem 8.10(3)]{Ma}. In light of Proposition \ref{mainP}, the worst behavior imaginable is this: a set $K \subset \R^{2}$ with packing dimension $\dim_{\p} K = \gamma$ is projected to a set of packing dimension $\gamma/2$ in a set $E \subset S^{1}$ containing (many) more than two directions. On the other hand, it follows from the bound \eqref{falc} that $\dim E \leq 2\gamma/(2 + \gamma)$, so $E$ cannot be very large in terms of Hausdorff dimension. Our next result shows that $E$ can have full packing dimension:
\begin{thm}\label{main2} Given $\gamma \in [0,2]$, there exists a compact set $K \subset \R^{2}$ with $\dim_{\p} K = \gamma$ such that
\begin{equation}\label{form1} \dim_{\p} \left\{e \in S^{1} : \dim_{\p} K_{e} = \frac{\gamma}{2}\right\} = 1. \end{equation} 
\end{thm}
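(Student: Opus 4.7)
\textit{Reduction.} It suffices to construct a compact $K \subset [0,1]^{2}$ with $\dim_{\p} K = \gamma$ such that $\dim_{\p} K_{e} \leq \gamma/2$ for every $e$ in some dense $G_{\delta}$ set $E \subset S^{1}$. Indeed, dense $G_{\delta}$ subsets of $S^{1}$ have packing dimension $1$ (the fact already invoked by the author just after Theorem \ref{main}), and Proposition \ref{mainP} promotes the upper bound to an equality: for any two linearly independent $e, \xi \in E$ one has $\gamma = \dim_{\p} K \leq \dim_{\p} K_{e} + \dim_{\p} K_{\xi} \leq \gamma/2 + \gamma/2$, so the inequality must be saturated; the same argument with a fixed $\xi \in E$ and varying $e \in E$ not parallel to $\xi$ yields $\dim_{\p} K_{e} = \gamma/2$ throughout $E$ outside at most two exceptional directions.

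\textit{Construction.} I would take $K$ to be a nested Cantor-like grid. Fix a dense enumeration $\{\theta_{n}\} \subset S^{1}$ and an integer $m \geq 2$, and set $c := m^{2/\gamma}$. Starting from $K^{(0)} := [0,1]^{2}$, inductively obtain $K^{(n)}$ from $K^{(n-1)}$ by replacing each constituent cell by an $m \times m$ grid of $m^{2}$ sub-cells of relative scale $c^{-1}$, oriented so that the grid axes are parallel to $\theta_{n}$ and $\theta_{n}^{\perp}$. Let $K := \bigcap_{n} K^{(n)}$. Since $K$ is covered at level $n$ by $m^{2n}$ cells of diameter $\asymp c^{-n}$, a standard box-counting computation delivers $\dim_{\p} K = \log m^{2} / \log c = \gamma$. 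The candidate exceptional set is $E := \bigcap_{N} \bigcup_{n \geq N} B(\theta_{n}, c^{-1}) \subset S^{1}$, which is dense $G_{\delta}$ by the density of $\{\theta_{n}\}$.

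\textit{Projection bound and main obstacle.} The geometric mechanism is that when $|\theta^{*} - \theta_{n}| \lesssim c^{-1}$, each of the $m$ columns in the level-$n$ grid (the $m$ sub-cells aligned with $\theta_{n}^{\perp}$ within a given column-index) has $\theta^{*}$-projection of length at most $R_{n-1} |\sin(\theta^{*} - \theta_{n})| \lesssim c^{-n}$, so the $m^{2}$ sub-cells inside each level-$(n-1)$ cell project under $\pi_{\theta^{*}}$ to only $\sim m$ intervals of size $c^{-n}$ rather than the generic $m^{2}$. Setting $S_{n}(\theta^{*}) := \{k \leq n : |\theta^{*} - \theta_{k}| \lesssim c^{-1}\}$, a direct upper-box estimate only yields $\overline{\dim}_{B} K_{\theta^{*}} \leq \gamma(1 - |S_{n}(\theta^{*})|/(2n))$, which saturates to $\gamma/2$ only in the density-one regime $|S_{n}(\theta^{*})|/n \to 1$. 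This is the main obstacle: for a generic $\theta^{*} \in E$ the catch-density is much smaller than $1$, so the crude upper-box bound exceeds $\gamma/2$. To overcome it I intend to exploit the countable stability of packing dimension, $\dim_{\p} F = \inf\{\sup_{j} \overline{\dim}_{B} P_{j} : F = \bigcup_{j} P_{j}\}$, and decompose $K_{\theta^{*}}$ into countably many pieces $P_{j}$ each adapted to a block of consecutive levels on which the compression density for $\theta^{*}$ approaches $1$. Guaranteeing that such blocks exist simultaneously for every $\theta^{*} \in E$ will likely force a refinement of the above construction—letting $m = m_{n}$ grow with $n$, repeating each $\theta_{n}$ at a carefully chosen block of consecutive levels, and/or shrinking the radii in the definition of $E$ accordingly—and the delicate interplay between these refinements and the equality $\dim_{\p} K = \gamma$ is the central technical task. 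Together with the reduction in the first paragraph, this produces the compact set $K$ with $\dim_{\p}\{e : \dim_{\p} K_{e} = \gamma/2\} = 1$ claimed by the theorem.
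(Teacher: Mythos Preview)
Your reduction is impossible, and the paper itself explains why. You aim to produce a dense $G_{\delta}$ set $E\subset S^{1}$ on which $\dim_{\p} K_{e}\le\gamma/2$; but Theorem~\ref{packingCategory} (together with Remark~\ref{FHRemark}) shows that for any analytic $K$ with $0<\dim_{\p} K=\gamma<2$ the set $\{e:\dim_{\p} K_{e}\neq\m\}$ is meagre, where $\m\ge 2\gamma/(2+\gamma)>\gamma/2$. A dense $G_{\delta}$ set is comeagre, so it must meet $\{e:\dim_{\p} K_{e}=\m\}$, and hence cannot be contained in $\{e:\dim_{\p} K_{e}\le\gamma/2\}$. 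The sentence preceding Theorem~\ref{packingCategory} states exactly this obstruction. So the very first step of your plan cannot succeed, and no amount of parameter tuning or countable-stability tricks on the projection side will rescue it.

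The paper avoids this by building $E$ not as a dense $G_{\delta}$ but as a nowhere-dense Cantor set with $\dim_{\p} E=1$ (Construction~\ref{setE}), with the arc lengths $r_{j}$ coupled to the construction of $K$. The crucial mechanism you are missing is that the paper controls the \emph{upper box dimension} of $K_{e}$ directly, maintaining the invariant $N(\rho_{e}(K_{j}),l)\le l^{-\gamma/2}$ for \emph{all} scales $\ell_{j}\le l\le 1$ and all $e\in E_{j}$ simultaneously (property~(iii) in the proof). This is achieved by letting the side-lengths $\ell_{j}$ drop so fast that the number $q_{j-1}$ of level-$(j-1)$ squares is negligible against $\ell_{j}^{-\gamma/2}$ (inequality~\eqref{ell}), by using rational directions together with Lemma~\ref{grid} to collapse each grid, and by shrinking $r_{j}$ to roughly $q_{j}^{-2/\gamma}$ so that the collapse persists on the whole arc. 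Your ``catch-density'' difficulty never arises because the paper does not try to hit a dense set of directions; each $e\in E$ lies in \emph{every} $\calI_{j}$-arc and therefore inherits the box bound at every scale.
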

This answers -- in the plane -- a question on the packing dimension of exceptional sets raised in \cite[\S4]{FH1}. In contrast with the example in Theorem \ref{main}, we cannot hope to construct compact -- or even analytic -- sets $K \subset \R^{2}$ such that $0 < \dim_{\p} K < 2$, and $\{e \in S^{1} : \dim_{\p} K_{e} = \dim_{\p} K/2\}$ has second category.
\begin{thm}\label{packingCategory} Let $A \subset \R^{2}$ be an analytic set, and let
\begin{displaymath} \m := \sup \{\dim_{\p} A_{e} : e \in S^{1}\}. \end{displaymath}
Then $\{e \in S^{1} : \dim_{\p} A_{e} \neq \m \}$ is a meagre set with with zero length.
\end{thm}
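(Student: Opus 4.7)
I plan to deduce both conclusions from the Falconer--Howroyd theorem \cite{FH} by observing that the energy integrals underpinning their lower bound are lower semicontinuous in the projection direction.

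\emph{Identifying $\m$.} First I would identify $\m$ with the Falconer--Howroyd $1$-packing dimension profile $\Dim_1 A$. By \cite{FH}, $\dim_{\p} A_{e} = \Dim_{1} A$ for $\calH^{1}$-almost every $e \in S^{1}$, and $\dim_{\p} A_{e} \leq \Dim_{1} A$ holds for \emph{every} $e \in S^{1}$, via the sup-over-measures characterization of $\Dim_{1} A$ together with the deterministic measure-level bound $\dim_{\p} \pi_{e} \mu \leq \Dim_{1} \mu$ also established in \cite{FH}. Hence $\m = \Dim_{1} A$, and the exceptional set $\{e : \dim_{\p} A_{e} < \m\}$ has zero length.

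\emph{Meagre.} For the category assertion, fix $s < \m$ and pick a Borel probability measure $\mu = \mu_{s}$ on $A$ with $\Dim_{1} \mu > s$. The lower-bound half of \cite{FH} proceeds by showing that an energy integral
\begin{displaymath}
J_{s}(e) := \int\!\!\int \phi_{s}(|\pi_{e}(x) - \pi_{e}(y)|) \, d\mu(x) \, d\mu(y),
\end{displaymath}
with $\phi_{s} \geq 0$ a continuous Falconer--Howroyd kernel (comparable to $\min\{1, r^{-s}\}$), is finite for $\calH^{1}$-a.e.\ $e \in S^{1}$; finiteness at a single $e$ forces $\dim_{\p} \pi_{e} \mu \geq s$, hence $\dim_{\p} A_{e} \geq s$. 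The key new ingredient is that $e \mapsto J_{s}(e)$ is lower semicontinuous on $S^{1}$: this is Fatou's lemma applied to the continuous dependence of $\pi_{e}(x) - \pi_{e}(y) = (x-y) \cdot e$ on $e$, together with the continuity of $\phi_{s}$. Consequently $U_{s} := \{e : J_{s}(e) < \infty\} = \bigcup_{M \in \N} \{e : J_{s}(e) < M\}$ is a countable union of open subsets of $S^{1}$, hence open; by the Fubini step in \cite{FH} it has full length, so $U_{s}$ is a dense open subset of $S^{1}$. Taking the intersection over a sequence $s_{n} \nearrow \m$ with corresponding measures $\mu_{s_{n}}$ produces a dense $G_{\delta}$ subset of $S^{1}$ on which $\dim_{\p} A_{e} = \m$, and the exceptional set is contained in its meagre complement.

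\emph{Main obstacle.} The technical hurdle lies in arranging the Falconer--Howroyd kernel $\phi_{s}$ so that it simultaneously (i) is continuous, so that $J_{s}$ is lower semicontinuous by Fatou, and (ii) satisfies $J_{s}(e) < \infty \Rightarrow \dim_{\p} \pi_{e} \mu \geq s$. Property (ii) is precisely why such kernels were introduced in \cite{FH}, while (i) can always be arranged by a mild smoothing. The two requirements should coexist without difficulty, but reconciling them with the measure-geometric input is where the real care is needed.
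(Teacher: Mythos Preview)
Your approach is genuinely different from the paper's --- the paper proves a quantitative bound $\underline{\dim}_{\MB}\{e : \dim_{\p} A_{e} < \sigma\} \leq 1 + \sigma - \m$ via a combinatorial $(\delta,1)$-set argument (Proposition \ref{discreteMarstrand}), after reducing packing dimension to box dimension by a Baire-category lemma (Lemma \ref{packingIsBox}); meagreness then follows because sets of lower box dimension $<1$ are nowhere dense. Your semicontinuity idea is more conceptual, and the zero-length part is fine (the paper also cites \cite{FH} for it), but the category argument has a genuine gap.

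The error is in the direction of semicontinuity. Fatou's lemma applied to a nonnegative continuous integrand gives $\liminf_{e_{n} \to e} J_{s}(e_{n}) \geq J_{s}(e)$, i.e.\ $J_{s}$ is \emph{lower} semicontinuous, which makes the \emph{super}level sets $\{J_{s} > M\}$ open --- not the sublevel sets $\{J_{s} < M\}$ as you claim. Thus $\{J_{s} = \infty\} = \bigcap_{M} \{J_{s} > M\}$ is a $G_{\delta}$ set, and a $G_{\delta}$ set of zero length on $S^{1}$ can perfectly well be comeagre (dense $G_{\delta}$ null sets abound), so no category conclusion follows. To run your argument you would need \emph{upper} semicontinuity of $J_{s}$; but any kernel $\phi_{s}$ for which finiteness of $J_{s}(e)$ genuinely forces $\dim_{\p} \pi_{e}\mu \geq s$ must blow up near $0$, so dominated convergence is unavailable and Fatou points the wrong way.

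There is a related structural issue you flag as the ``main obstacle'' but underestimate: the Falconer--Howroyd lower bound does not in fact come from a single energy integral. The condition yielding $\dim_{\p} \pi_{e}\mu \geq s$ in \cite{FH} is a $\mu$-a.e.\ pointwise $\liminf_{r \to 0}$ of potentials $r^{-s}\int \min\{1, r/|(x-y)\cdot e|\}\,d\mu(y)$, and collapsing this into one integral $J_{s}(e)$ with property (ii) is not something \cite{FH} supplies. So even before the semicontinuity misstep, the object $J_{s}$ you need may not exist.
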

\begin{remark}\label{FHRemark} The 'zero length' part of the theorem follows from \cite{FH}. Namely, if $\dim_{\p} A = \gamma \in [0,2]$, it was shown in \cite{FH} that there exists a constant $c \geq 2\gamma/(2 + \gamma)$ such that $\dim_{\p} A_{e} \leq c$ for all $e \in S^{1}$, and $\dim_{\p} A_{e} = c$ for almost all $e \in S^{1}$. Of course, this implies that $c = \m$. In case $0 < \gamma < 2$, we then have $\m = c \geq 2\gamma/(2 + \gamma) > \gamma/2$, and, in particular, the set $\{e : \dim_{\p} A_{e} = \dim_{\p} A/2\}$ is meagre for $0 < \dim_{\p} A < 2$. Our proof of Theorem \ref{packingCategory} -- very different from the one in \cite{FH} -- gives the same result for upper box dimension as a by-product, see Theorem \ref{boxCategory}. This was not contained in \cite{FH}, but (the zero-length part of) the result was proved by Howroyd \cite{Ho} in 2001, developing further the potential theoretic machinery from \cite{FH}. 
\end{remark}
In view of Theorems \ref{main} and \ref{main2}, it might seem that packing dimension is a hopelessly inaccurate tool for measuring the size of exceptional sets. However, there is one more direction unexplored. If the set $K \subset \R^{2}$ has large Hausdorff dimension to begin with, what can we say about the set $\{e \in S^{1} : \dim_{\p} K_{e} \leq \sigma\}$? In this situation, the only existing general result seems to be the following one by Peres, K. Simon and B. Solomyak \cite[Proposition 1.3]{PSS}. If $K \subset \R^{2}$ is an analytic set with $\calH^{\gamma}(K) > 0$ for some $\gamma \in (0,1]$, then
\begin{equation}\label{PSSEstimate} \dim \{e \in S^{1} : \mathcal{P}^{\gamma}(K_{e}) = 0\} \leq \gamma. \end{equation}
In Peres, Simon and Solomyak's result, the size of the exceptional set is still measured in terms of Hausdorff dimension. Our fourth theorem provides an estimate for the \textbf{packing} dimension of the exceptional set $\{e \in S^{1} : \dim_{\p} K_{e} \leq \sigma\}$: 
\begin{thm}\label{estimate} Let $K \subset \R^{2}$ be an analytic set with Hausdorff dimension $\dim K = \gamma \in (0,1]$. Then we have the estimates
\begin{displaymath} \dim_{\p}\{e \in S^{1} : \dim_{\p} K_{e} \leq \sigma\} \leq \frac{\sigma\gamma}{\gamma + \sigma(\gamma - 1)}, \qquad 0 \leq \sigma \leq \gamma, \end{displaymath}
and
\begin{displaymath} \dim_{\p}\{e \in S^{1} : \dim_{\p} K_{e} \leq \sigma\} \leq \frac{(2\sigma - \gamma)(1 - \gamma)}{\gamma/2} + \sigma, \qquad \gamma/2 \leq \sigma \leq \gamma. \end{displaymath}  
\end{thm}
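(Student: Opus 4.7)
The plan is to assume $\dim_{\p} E > t$ for a putative value $t$ exceeding the claimed bound, where $E = \{e \in S^{1} : \dim_{\p} K_{e} \leq \sigma\}$, and derive a contradiction via a discretized counting argument that combines a Frostman measure on $K$ with the box-counting thickness of $E$.

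First I would invoke the characterization of packing dimension via modified upper box-counting dimension: the hypothesis $\dim_{\p} E > t$ yields a compact $F \subset E$ such that $\overline{\dim}_{B}(F \cap U) \geq t$ for every open $U$ meeting $F$. Because $\dim K = \gamma$, I may also fix a Frostman measure $\mu$ on a compact subset of $K$ with $\mu(B(x,r)) \lesssim r^{\gamma - \epsilon}$ for a small $\epsilon$. For each $e \in F$, the assumption $\dim_{\p} K_{e} \leq \sigma$ yields a countable decomposition $K_{e} = \bigcup_{i} K_{e}^{(i)}$ with each piece of upper box-counting dimension $\leq \sigma$; a Baire category argument applied over a countable family of candidate uniform coverings passes to a relatively open subset of $F$ (which inherits the box-counting thickness) on which the coverings become uniform in $e$: there exist $C < \infty$ and scales $r_{n} \searrow 0$ such that $K_{e}$ is covered by at most $Cr_{n}^{-\sigma-\epsilon}$ intervals of length $r_{n}$ for every $e$ in the subset.

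At each $r = r_{n}$ I would then choose a maximal $r$-separated set $\{e_{1}, \ldots, e_{N}\} \subset F$ with $N \gtrsim r^{-t+\epsilon}$, and for each $j$ fix the promised covering of $K_{e_{j}}$. Pulling back under $\tn_{e_{j}}$ gives, for each $j$, a family of at most $Cr^{-\sigma-\epsilon}$ planar tubes of width $r$ covering $\spt(\mu)$. A two-way estimate of the aggregate $\mu$-mass of these tubes---summing individual tube masses bounded via the Frostman property of $\mu$ against the lower bound $N \cdot \mu(\spt \mu) \gtrsim r^{-t+\epsilon}$ for the total sum---should produce an inequality of the form $r^{-t + O(\epsilon)} \lesssim r^{-g(\sigma,\gamma,t) + O(\epsilon)}$ as $r \to 0$. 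Matching the exponents as $\epsilon \to 0$ yields the critical relation $1/t = 1/\sigma - 1/\gamma + 1$, which rearranges exactly to the first bound $t \leq \sigma\gamma / (\gamma + \sigma(\gamma-1))$. The second bound, valid only in the narrower range $\gamma/2 \leq \sigma \leq \gamma$, should arise from a Cauchy--Schwarz / $L^{2}$ refinement of the same counting scheme, analogous to the $L^{2}$ methods underlying \eqref{falc}, buying improved exponents in exchange for a restricted parameter range.

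The main obstacle will be controlling the individual tube masses $\mu(\tn_{e}^{-1}(I))$: these cannot be bounded uniformly in $e$, only on average (as in Marstrand's original projection argument). This forces a pigeonholing step to extract a sub-collection of directions satisfying a common tube estimate while simultaneously preserving the modified box-counting thickness inherited from $F$, and verifying that this pigeonholing is compatible with the uniformization of Step~2 is delicate. Managing the ensuing chain of $\epsilon$-losses, checking that the exponent bookkeeping reproduces the stated expressions exactly, and navigating the transition between the two regimes of $\sigma$ is where most of the technical effort will reside.
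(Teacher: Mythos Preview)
Your reduction step is in the right spirit (the paper uses a Baire-type lemma, Lemma~\ref{packingIsBox}, to pass from $\dim_{\p} K_{e}$ to $\overline{\dim}_{\B} K_{e}$ on a compact piece carrying positive Frostman mass, and then decomposes the exceptional set as $\bigcup_i \bigcap_{\delta<1/i}\{e:N(K_e,\delta)\le\delta^{-\sigma}\}$), but the core counting you describe has two genuine gaps.

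\textbf{First estimate.} The first-moment sum you propose is vacuous: for each direction the tubes partition $\spt\mu$, so $\sum_{T}\mu(T)=1$ and the ``two-way estimate'' collapses to $N\le N$. The paper instead double-counts the \emph{energy}
\[
\mathcal{E}=\sum_{e\in E_0}(\mu\times\mu)\{(x,y):x\sim_e y\},
\]
where $x\sim_e y$ means $x,y$ lie in a common $\delta^{\rho}$-tube in direction $e$. The lower bound $\mathcal{E}\gtrsim (\card E_0)\,\delta^{\rho\sigma}$ is Cauchy--Schwarz on the tube masses (already an $L^2$ step for the first bound, not only the second). The upper bound uses $\card\{e\in E_0:x\sim_e y\}\le\min\{\delta^{\rho-1}/|x-y|,\,\card E_0\}$, interpolates via $\min\{a,b\}\le a^{\gamma}b^{1-\gamma}$, and integrates against $I_{\gamma}(\mu)<\infty$. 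The key point you are missing is that this upper bound depends on the \emph{a priori} estimate $\card E_0\lesssim\delta^{-\tau}$, so the inequality must be \emph{iterated}: each pass improves $\tau$, the sequence converges to $\rho\sigma/\gamma-(\rho-1)$, and finally $\rho$ is optimized to $\gamma/(\gamma+\sigma(\gamma-1))$. Without the bootstrap (and the auxiliary scale parameter $\rho$) no single pass gives the stated exponent.

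\textbf{Second estimate.} This is not a routine $L^2$ refinement of the first. The paper uses the weighted energy $\sum_{e}\iint_{\{x\sim_e y\}}|x-y|^{1-\gamma}\,d\mu\,d\mu$, whose upper bound is immediate from $I_\gamma(\mu)<\infty$. The whole difficulty is the lower bound: one must show that within each $\delta$-tube $T_j$ a definite fraction of the $\mu$-mass lies in two pieces separated by $\gtrsim\delta^{\tau}$ for a controllable $\tau$. The paper does this by fixing, at the outset, three distinct directions $\xi_1,\xi_2,\xi_3\in E$; for any $e$ at least two of them, say $\xi_1,\xi_2$, are transversal to $e$. If instead most mass sat in short $\delta\times\delta^{\tau}$ sub-rectangles $S_j\subset T_j$, then the Frostman bound forces many $\delta$-balls inside each $S_j$, and projecting in one of $\xi_1,\xi_2$ would give $N(K_{\xi_i},\delta)$ too large, contradicting $\xi_i\in E$. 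This pins down $\delta^{\tau}\asymp\delta^{(2\sigma-\gamma)/(\gamma/2)}$, and plugging that $\tau$ into the lower bound yields the second inequality. Nothing like this three-reference-directions argument appears in your outline, and a generic Cauchy--Schwarz step will not produce it.
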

\begin{remark} The bounds may be difficult to read at first sight, so let us review some of their features. First, the restriction $\sigma \geq \gamma/2$ in the second bound has little consequence, since, by Proposition \ref{mainP}, the exceptional set $\{e \in S^{1} : \dim_{\p} K_{e} \leq \sigma\}$ has anyway packing dimension zero for $0 \leq \sigma < \gamma/2$. The first estimate is sharper than the second for $\sigma$ close to $\gamma$: the upper bound in the first estimate is less than $\sigma/\gamma < 1$ for $\sigma < \gamma$, and tends to one as $\sigma \nearrow \gamma$; the second estimate unfortunately tends to $2 - \gamma \geq 1$. Naturally, the second estimate outperforms the first one for $\sigma$ close to $\gamma/2$: the first estimate tends to $\gamma/(1 + \gamma)$ as $\sigma \searrow \gamma/2$, whereas the second estimate recovers the bound $\dim_{\p} \{e : \dim_{\p} K_{e} \leq \gamma/2\} \leq \gamma/2$, which, for self-similar sets, is precisely \eqref{rams}. Finally, the first estimate can be reformulated as follows: if $\tau \leq 1$, then
\begin{displaymath} \dim_{\p} \{e \in S^{1} : \dim_{\p} K_{e} \leq \tau \dim K\} \leq \frac{\tau \cdot \dim K}{\tau \cdot \dim K + (1 - \tau)}. \end{displaymath}
In particular, the bound tends to zero as $\dim K \to 0$.
\end{remark}

The first estimate in Theorem \ref{estimate} shows that $\dim_{\p} \{e \in S^{1} : \dim_{\p} K_{e} \leq \sigma\} < 1$ for any $\sigma < \dim K$, given that $0 \leq \dim K \leq 1$. Since sets with packing dimension less than one are meager, we obtain
\begin{cor} If $0 \leq \dim K \leq 1$, the set $\{e \in S^{1} : \dim_{\p} K_{e} < \dim K\}$ is meager.
\end{cor}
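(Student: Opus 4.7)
The plan is to realize the exceptional set as a countable union of sets to which the first estimate of Theorem \ref{estimate} applies, and then invoke the standard fact that a subset of $S^{1}$ with packing dimension strictly less than one must be meagre.

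First I would handle the trivial case $\dim K = 0$, in which $\{e \in S^{1} : \dim_{\p} K_{e} < \dim K\}$ is empty. So assume $\gamma := \dim K \in (0,1]$ and write
\begin{displaymath} \{e \in S^{1} : \dim_{\p} K_{e} < \gamma\} = \bigcup_{n \in \N} \{e \in S^{1} : \dim_{\p} K_{e} \leq \gamma - 1/n\}, \end{displaymath}
where the union is over $n$ large enough that $\gamma - 1/n \in [0,\gamma]$. The first bound in Theorem \ref{estimate}, applied with $\sigma = \gamma - 1/n$, gives
\begin{displaymath} \dim_{\p} \{e \in S^{1} : \dim_{\p} K_{e} \leq \gamma - 1/n\} \leq \frac{\sigma \gamma}{\gamma + \sigma(\gamma - 1)}. \end{displaymath}
A direct check shows that this upper bound is strictly less than $1$ whenever $\sigma < \gamma$: the inequality $\sigma\gamma < \gamma + \sigma(\gamma - 1) = \sigma\gamma + (\gamma - \sigma)$ is equivalent to $\sigma < \gamma$.

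Now I would use the standard observation that any $E \subset S^{1}$ with $\dim_{\p} E < 1$ is meagre. Indeed, by the definition of packing dimension, such $E$ decomposes as a countable union $E = \bigcup_{i} E_{i}$ with $\overline{\dim}_{\mathrm{B}} E_{i} < 1$ for each $i$. Since upper box dimension is invariant under closure, each $\overline{E_{i}}$ has box dimension strictly less than $1$, hence contains no arc of $S^{1}$; so each $E_{i}$ is nowhere dense and $E$ is meagre. Applying this to each set in the countable union above finishes the proof, since a countable union of meagre sets is meagre. I do not anticipate any genuine obstacle: the only substantive input is Theorem \ref{estimate}, and the passage from packing dimension less than one to meagreness is routine.
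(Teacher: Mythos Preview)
Your proof is correct and follows the same route as the paper: apply the first estimate of Theorem \ref{estimate} at levels $\sigma < \gamma$ to get packing dimension strictly below one, then use that packing dimension $<1$ forces meagreness. The paper states the latter fact in Section 2 (right after Proposition \ref{falcProp}) and simply quotes it, while you spell out both the countable-union decomposition and the ``$\dim_{\p} < 1 \Rightarrow$ meagre'' argument explicitly; otherwise the arguments are identical.
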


Finally, our method for general sets combined with a 'dimension conservation principle' due to H. Furstenberg \cite{Fu} from 2008 can be used to recover a different proof for -- and a slightly generalized version of -- Rams' estimate \eqref{rams}.

\begin{thm}\label{estimate2} Let $K$ be a self-similar or a compact homogeneous set (see the remark below) in the plane. Then
\begin{displaymath} \dim_{\p} \{e \in S^{1} : \dim K_{e} \leq \sigma\} \leq \sigma, \qquad 0 \leq \sigma < \dim K. \end{displaymath}
\end{thm}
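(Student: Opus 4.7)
The strategy is to combine Furstenberg's dimension conservation principle \cite{Fu}, which applies to compact homogeneous sets and via their homogeneity also to self-similar sets, with the method underlying Theorem \ref{estimate}. Set $\gamma = \dim K$ and $E := \{e \in S^{1} : \dim K_{e} \leq \sigma\}$ for $\sigma < \gamma$, and aim at $\dim_{\p} E \leq \sigma$.

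Furstenberg's principle guarantees that every orthogonal projection $\pi_{e}$ is dimension conserving on $K$: there exists $\delta_{e} \geq 0$ with $\dim K_{e} = \delta_{e}$ and a subset $F_{e} \subset K_{e}$ with $\dim F_{e} = \delta_{e}$ such that the slice $K \cap \pi_{e}^{-1}(t)$ has Hausdorff dimension at least $\gamma - \delta_{e}$ for every $t \in F_{e}$. For each $e \in E$ this produces a ``thick'' slice $L_{e} \subset K$ of Hausdorff dimension $\geq \gamma - \sigma$, lying on a line perpendicular to $e$. The desired bound then reads as a Kakeya / Marstrand-slicing statement: the presence of an $E$-indexed family of $(\gamma - \sigma)$-dimensional slices inside $K$ should force
\begin{equation*}
\dim_{\p} K \,\geq\, (\gamma - \sigma) + \dim_{\p} E,
\end{equation*}
which, together with $\dim_{\p} K = \gamma$ (a standard fact for homogeneous and self-similar sets), yields $\dim_{\p} E \leq \sigma$.

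The main obstacle is establishing the displayed slicing inequality in packing dimension. The corresponding Hausdorff-dimensional estimate is classical, but the packing-dimensional version is genuinely subtle, as witnessed by Theorems \ref{main} and \ref{main2} for general planar sets. Here the method underlying Theorem \ref{estimate} enters decisively: one takes a Frostman-type measure on $E$ realising its packing dimension, couples it with Frostman measures on the $(\gamma - \sigma)$-dimensional fibers $L_{e}$, and estimates the resulting joint energy integral. The homogeneity of $K$ supplies uniform fiber behaviour across scales, which is precisely what permits a packing-dimensional conclusion rather than a merely Hausdorff-dimensional one. The self-similar case finally reduces to the homogeneous one on noting that compact self-similar sets in the plane are homogeneous in the sense of \cite{Fu}.
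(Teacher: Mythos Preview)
Your high-level plan---feed Furstenberg's dimension conservation into an energy/Kakeya-type argument to obtain the slicing inequality $\dim_{\p} K \geq (\gamma-\sigma)+\dim_{\p} E$---is exactly the paper's strategy, but two of the steps you propose do not work as stated.

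\textbf{There is no Frostman lemma for packing dimension.} One cannot in general find a measure on $E$ satisfying $\mu(B(x,r))\lesssim r^{s}$ with $s$ close to $\dim_{\p} E$, so the phrase ``a Frostman-type measure on $E$ realising its packing dimension'' has no content. The paper never puts a measure on $E$. Instead it covers $E$ by countably many sets
\[
E_{\varepsilon,\tau}=\bigl\{e: H^{\gamma-\Delta-\tau}(\{t:H^{\Delta-\tau}(K\cap\rho_{e}^{-1}\{t\})>\varepsilon\})>\varepsilon \text{ for some }\Delta\geq\gamma-\sigma\bigr\}
\]
and bounds $\overline{\dim}_{\B} E_{\varepsilon,\tau}\leq\sigma+3\tau$ by direct double counting: take a $\delta$-net $K_{0}\subset K$ of cardinality $\asymp\delta^{-\gamma}$ (this uses $\overline{\dim}_{\B} K=\gamma$, not merely $\dim_{\p} K=\gamma$), and count pairs $(x,y)\in K_{0}\times K_{0}$ lying in a common $\delta$-tube with $|x-y|\gtrsim_{\varepsilon}1$. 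Dimension conservation supplies, for every $e\in E_{\varepsilon,\tau}$, roughly $\delta^{\Delta-\gamma}$ tubes each containing $\gtrsim\delta^{-\Delta}$ such points, hence $\gtrsim\delta^{\sigma-2\gamma}$ such pairs; on the other hand each macroscopically separated pair lies in only $O(1)$ tube-directions, so the total is $\lesssim\delta^{-2\gamma}$. Note that a \emph{single} thick slice $L_{e}$ per direction, as you write, would only give $\overline{\dim}_{\B}E_{\varepsilon,\tau}\leq 2\sigma$; the full strength of dimension conservation (many base points, each with a thick fiber) is needed.

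\textbf{Self-similar sets are not all homogeneous.} Furstenberg's class is only known to contain self-similar sets with strong separation and no rotations. The paper handles the general self-similar case by a separate reduction: if the IFS contains an irrational rotation, Peres--Shmerkin \cite{PS} gives $\dim K_{e}=\gamma$ for every $e$ and there is nothing to prove; otherwise \cite[Lemma~4.2]{Or} extracts a self-similar subset $\tilde{K}\subset K$ with strong separation, no rotations, and $\dim\tilde{K}>\sigma$, to which the homogeneous case applies.
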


\begin{remark} In contrast with the formulation of Rams' estimate \eqref{rams}, we impose no conditions on separation or the absence of rotations in case the set $K \subset \R^{2}$ above is self-similar. Still, Rams' estimate is -- in the self-similar case -- not essentially less general than the one above: our proof of Theorem \ref{estimate2} starts by reducing the situation to the 'no rotations, strong separation' case. However, one needs results more recent than Rams' paper to accomplish this reduction; namely, we use \cite[Theorem 5]{PS} by Peres and P. Shmerkin from 2009, showing that any orthogonal projection of a planar self-similar set containing an irrational rotation preserves dimension. The \emph{homogeneous sets} mentioned in the statement of Theorem \ref{estimate2} were introduced by H. Furstenberg. Self-similar sets satisfying the strong separation condition and containing no rotations are (not the only) examples of such sets, see \cite[\S1]{Fu}.
\end{remark}

It appears to be a challenging task to figure out the sharpness of Theorems \ref{estimate} and \ref{estimate2}. Here is the best construction we could come up with:
\begin{thm}\label{bigEx} Let $\sigma \in (3/4,1)$. Then there exists a compact set $K \subset \R^{2}$ with $\calH^{1}(K) > 0$, and a number $\tau(\sigma) < 1$ such that
\begin{displaymath} \dim_{\p} \{e \in S^{1} : \dim_{\p} K_{e} \leq \tau(\sigma)\} \geq \sigma. \end{displaymath}
\end{thm}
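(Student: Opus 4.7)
The strategy is to construct $K$ as a planar Cantor set obtained by a multi-scale selection of dyadic cells, engineered so that a natural Frostman measure forces $\calH^{1}(K) > 0$, while along a prescribed set $E \subset S^{1}$ with $\dim_{\p} E \geq \sigma$, the projections $K_{e}$ are squeezed into comparatively few $\delta_{k}$-intervals at every scale in a suitable subsequence, giving packing dimension at most some $\tau(\sigma) < 1$. I would first fix a rapidly decreasing sequence of scales $\delta_{k} = 2^{-n_{k}}$ and an Ahlfors $\sigma$-regular Cantor set $E \subset S^{1}$, so that $\dim_{\p} E = \sigma$; for each $k$, a $\delta_{k}$-net $E_{k} \subset E$ satisfies $\card E_{k} \lesssim \delta_{k}^{-\sigma}$. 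The set $K$ will arise from a recursive rule applying, inside every surviving cell of side $\delta_{k-1}$, a single pattern $P_{k} \subset \{1,\ldots,N_{k}\}^{2}$ of cardinality $\asymp N_{k}$, where $N_{k} = \delta_{k-1}/\delta_{k}$.

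The heart of the construction is the choice of $P_{k}$. I would require two compatible properties: (i) $\card P_{k} \asymp N_{k}$, which, after iteration and normalisation, is exactly what is needed so that equidistributing mass $1/\card \calS_{k}$ on the cells $\calS_{k} = \{Q \in \calD_{\delta_{k}} : Q \subset \bigcup \calS_{k-1}\}$ yields a probability measure $\mu$ with $\mu(B(x,r)) \lesssim r$ in the limit; and (ii) for every direction $e$ in some $N_{k}^{-\sigma}$-net on $S^{1}$, the image $\pi_{e}(P_{k})$ can be covered by at most $N_{k}^{\tau}$ unit intervals. Setting $K = \bigcap_{k} \bigcup \calS_{k}$, property (i) combined with the mass distribution principle yields $\calH^{1}(K) > 0$, while property (ii) propagates through the scales: for each $e \in E$, approximating $e$ by the closest $e_{k} \in E_{k}$, the $\delta_{k}$-covering number of $\pi_{e}(K)$ is bounded by $\delta_{k}^{-\tau}$ up to a Lipschitz constant, so that $\overline{\dim}_{\BOX} \pi_{e}(K) \leq \tau$ along the subsequence $\delta_{k}$. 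A standard decomposition of $K$ (or the use of Tricot's characterisation of packing dimension) then upgrades this to $\dim_{\p} K_{e} \leq \tau$ for every $e \in E$, completing the proof.

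\textbf{Main obstacle.} The crux is constructing the pattern $P_{k}$ with both $\card P_{k} \asymp N_{k}$ and $\lesssim N_{k}^{\tau}$ strips in each of $\asymp N_{k}^{\sigma}$ directions. This is an incidence/covering problem: after fixing $N_{k}^{\tau}$ strips per direction, the cells of $P_{k}$ must lie in the intersection of one strip per direction, and a cell count of order $N_{k}$ forces these intersections to be suitably distributed. A direct pigeonhole comparison between the total number of cell-strip incidences, $\asymp N_{k}^{1+\sigma}$, and the available budget $\asymp N_{k}^{\tau+\sigma}$ gives $\tau \geq 1$, so the naive argument is insufficient; but by allowing the strip count in different directions to cluster (using a block-diagonal or Cassels--Schmidt type pattern built from arithmetic progressions) one can fit $N_{k}$ cells into $N_{k}^{\tau}$ strips in $N_{k}^{\sigma}$ directions precisely when $(\sigma,\tau)$ satisfies an inequality whose solution region, restricted to $\tau < 1$, is exactly $\sigma > 3/4$. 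Making this combinatorial step precise, and specifying the resulting function $\tau(\sigma) < 1$, is where I would expect to spend most of the work.
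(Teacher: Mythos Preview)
Your high-level plan matches the paper's architecture, but there is a genuine gap in how the good directions align across scales. Even granting a pattern $P_{k}$ with $\leq N_{k}^{\tau}$ strips in each of $\asymp N_{k}^{\sigma}$ directions, the $\delta_{k}$-covering number of $K_{e}$ is essentially the product $\prod_{j\leq k}(\text{strips of }P_{j}\text{ in direction }e)$, so you need the \emph{same} $e$ to be good for every $P_{j}$, $j\leq k$. Your proposal does not explain why the good directions of $P_{k}$ refine those of $P_{k-1}$; for generic patterns they will not, and an Ahlfors-regular $E$ fixed in advance forces nothing here. The paper solves this by \emph{rotating} its building blocks: it constructs explicit sets $B_{n}$ (nested grids $U_{n}$ combined with a vertical line-segment set $L_{n}$) that come with a bundle of $(n!)^{d-3}$ rational good directions $D_{n}$ clustered near $(0,1)$; at each node $v$ of a tree it inserts a rotated copy $B_{n_{v},e_{v}}$ so that the rotated $D_{n_{v}}$ lands inside the arc $I_{v}$ already assigned to $v$. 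The branching ratio $(n!)^{d-3}$ versus separation $\gtrsim (n!)^{-d}$ is then exactly what gives $\dim_{\p}E\geq 1-3/d\geq\sigma$ once $d\geq 3/(1-\sigma)$.

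A second issue: controlling $N(K_{e},\delta_{k})$ only along the subsequence $(\delta_{k})$ does not yield $\dim_{\p}K_{e}\leq\tau$ by any standard argument, since upper box dimension is a $\limsup$ over all scales and Tricot's characterisation does not bridge this. The paper proves $N(K_{e},\delta)\lesssim\delta^{-\tau}$ for \emph{every} $\delta\in(0,1]$ via a careful interpolation (Lemma~\ref{heart}(i) together with the $\star$-product bookkeeping of Lemma~\ref{projCover}); the elongated $L_{n}$ component of $B_{n}$ and the threshold $\tau>(d+1)/(d+2)$ enter precisely at this step. Finally, your pigeonhole is off: a width-$1$ strip in an $N\times N$ grid may hold up to $N$ cells, so comparing $N_{k}^{1+\sigma}$ incidences to $N_{k}^{\tau+\sigma}$ strips gives no constraint; the correct single-scale obstruction is Proposition~\ref{pointMarstrand} (Szemer\'edi--Trotter), which only forces $\tau\geq(1+\sigma)/2$ and is compatible with $\tau<1$ for all $\sigma<1$. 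The difficulty is entirely in the multi-scale nesting above, and the paper handles it by an explicit arithmetic construction rather than an abstract combinatorial lemma.
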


Thus, one cannot expect very dramatic improvements to Theorem \ref{estimate} -- such as $\dim_{\p} \{e \in S^{1} : \dim_{\p} K_{e} < \dim K\} = 0$ -- but we still strongly suspect that our bounds are not sharp: at any rate, we believe that the packing dimension of the exceptional set $\{e \in S^{1} : \dim_{\p} K_{e} \leq \sigma\}$ should tend to zero as $\sigma \searrow \dim K/2$, in analogue with Bourgain's bound \eqref{bourgain} for Hausdorff dimension. During our futile attempts to verify this conjecture, we came up with the following Marstrand-type theorem for finite planar sets. We had not encountered the result previously, so we state it here and provide a quick proof (based on the Szemer\'edi-Trotter incidence bound) at the end of the paper:
\begin{proposition}\label{pointMarstrand} Let $P \subset \R^{2}$ be a collection of $n \geq 2$ points, and let $1/2 \leq s < 1$. Then
\begin{displaymath} \card \{e \in S^{1} : \card P_{e} \leq n^{s} \} \lesssim_{s} n^{2s - 1}. \end{displaymath}
\end{proposition}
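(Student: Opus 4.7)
The plan is to apply the Szemer\'edi--Trotter incidence bound to a point-line configuration extracted from the exceptional directions. Writing $E := \{e \in S^{1} : \card P_{e} \leq n^{s}\}$ and $N := \card E$, the goal is to establish $N \lesssim_{s} n^{2s-1}$. For each $e \in E$, the inequality $\card P_{e} \leq n^{s}$ is equivalent to the statement that $P$ lies in the union of at most $n^{s}$ lines orthogonal to $e$. I would collect those lines containing at least two points of $P$ into a family $L_{e}$, so $\card L_{e} \leq n^{s}$. Since lines in $L_{e}$ and $L_{e'}$ have distinct directions whenever $e \neq e'$, the combined family $\calL := \bigcup_{e \in E} L_{e}$ satisfies $\card \calL \leq N n^{s}$, with no multiplicity.

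The first substantive step is to bound the number of incidences $I(P,\calL)$ from below. For each fixed $e \in E$, at most $n^{s}$ points of $P$ can fail to lie on a line of $L_{e}$ (namely those alone on their orthogonal slice), so the contribution of $L_{e}$ to the incidence count is at least $n - n^{s}$. Using $s < 1$ to ensure $n^{s} \leq n/2$ for $n$ above a threshold depending only on $s$, summation over $e \in E$ yields $I(P,\calL) \geq Nn/2$.

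Next, I would invoke Szemer\'edi--Trotter to obtain $I(P,\calL) \lesssim (n \cdot \card \calL)^{2/3} + n + \card \calL \lesssim (n^{1+s}N)^{2/3} + n + Nn^{s}$. The term $Nn^{s}$ is at most $Nn/2$ by the same absorption as before, hence can be transferred to the left-hand side. What remains is $Nn \lesssim (n^{1+s}N)^{2/3} + n$, which splits into two cases: if the first term dominates, rearrangement gives exactly $N \lesssim n^{2s-1}$; if the second dominates, then $N \lesssim 1$, which still respects the target bound because $2s-1 \geq 0$. Finitely many small values of $n$ are absorbed into the implicit constant using the trivial estimate $N \leq \binom{n}{2}$.

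I do not anticipate a serious obstacle; the calculation is short once the correct point-line system is assembled. The only delicate points are verifying that all three terms of the Szemer\'edi--Trotter bound are handled and noting where the hypothesis $s \geq 1/2$ is used, namely to guarantee that the trivial alternative $N = O(1)$ does not overshoot the target exponent $n^{2s-1}$.
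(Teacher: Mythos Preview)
Your proposal is correct and follows essentially the same approach as the paper: both apply the Szemer\'edi--Trotter bound to the point set $P$ against a family of at most $Nn^{s}$ lines coming from the fibres over the exceptional directions, then compare with the trivial lower bound $\sim Nn$ on incidences. The only cosmetic differences are that the paper keeps \emph{all} fibres $\rho_{e}^{-1}\{t\}$, $t \in P_{e}$, so the incidence count is exactly $kn$ rather than $\geq Nn/2$, and the paper absorbs the ``$+n$'' term of Szemer\'edi--Trotter into the main term using $s \geq 1/2$ rather than treating it as the separate alternative $N \lesssim 1$; the algebra is otherwise identical.
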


%The following question also remains open:
%\begin{question}\label{Quest} What is the best estimate one can obtain for the \textbf{Hausdorff} dimension of the set $\{e \in S^{1} : \dim_{\p} K_{e} \leq \sigma\}$ for $\sigma < \dim K$? Peres, Simon and Solomyak make no comment on the sharpness of their bound \eqref{PSSEstimate}, and the Hausdorff dimension of the exceptional set in Theorem \ref{bigEx} is likely to equal zero.  Could it be that
%\begin{equation}\label{quest} \dim \{e \in S^{1} : \dim_{\p} K_{e} < \dim K\} = 0, \qquad \dim K \leq 1? \end{equation}
%\end{question}

\section{Notations, definitions and the proof of Theorem \ref{main}}

\begin{notation} The unit circle $\{x \in \R^{2} : |x| = 1\}$ is denoted by $S^{1}$. The orthogonal projection in $\R^{2}$ onto the vector spanned by $e \in S^{1}$ is denoted by $\rho_{e}$. For convenience, we think of $\rho_{e}$ as a mapping onto $\R$ instead of $\operatorname{span}(e) \subset \R^{2}$, which means that we define $\rho_{e}(x) := x \cdot e \in \R$ for $x \in \R^{2}$. In agreement with the notation we adopted in the introduction, we will often use the abbreviation $K_{e} := \rho_{e}(K)$ for sets $K \subset \R^{2}$. If $A,B > 0$, the notation $A \lesssim B$ means that $A \leq CB$ for some constant $C \geq 1$, which may depend on various parameters, but not on $B$. 
\end{notation}

Next, we recall some basic facts on packing and box-counting dimensions.
\begin{definition}[Packing and box-counting dimensions]\label{dimensions} If $B \subset \R^{d}$ is any bounded set and $\delta > 0$, we denote by $P(B,\delta)$ the maximum cardinality of a \emph{$\delta$-packing of $B$ with balls}, that is, 
\begin{displaymath} P(B,\delta) := \max \{j \geq 1 : x_{1},\ldots,x_{j} \in B, \text{ and the balls } B(x_{j},\delta) \text{ are disjoint}\}. \end{displaymath}
Under the same setting, we denote by $N(B,\delta)$ the minimum cardinality of a \emph{$\delta$-cover of $B$ with balls}, that is,
\begin{displaymath} N(B,\delta) := \min \left\{j \geq 1 : x_{1},\ldots,x_{j} \in \R^{d}, \text{ and } B \subset \bigcup_{i = 1}^{j} B(x_{i},\delta) \right\}. \end{displaymath}
Since $N(B,2\delta) \leq P(B,\delta) \leq N(B,\delta/2)$, the numbers
\begin{displaymath} \limsup_{\delta \to 0} \frac{\log N(B,\delta)}{-\log \delta} \quad \text{and} \quad \limsup_{\delta \to 0} \frac{\log P(B,\delta)}{-\log \delta} \end{displaymath}
are equal, and the common value is the (upper) \emph{box-counting dimension of $B$}, denoted by $\overline{\dim}_{\B} B$. The packing dimension of $B$ is now defined by
\begin{displaymath} \dim_{\p} B := \inf \left\{\sup_{j} \overline{\dim}_{\B} F_{j} : B \subset \bigcup_{j \in \N} F_{j} \right\}. \end{displaymath}
Since $\overline{\dim}_{\B} B = \overline{\dim}_{\B} \overline{B}$ for any set $B$, the definition above is unaffected, if we assume that the sets $F_{j}$ are closed. 
\end{definition}

It is immediate from the definition of packing dimension that $\dim_{\p} B \leq \overline{\dim}_{\B} B$. The converse inequality is not true in general, but the following proposition from \cite{Fa2} often solves the issue:
\begin{proposition}[Proposition 3.6 in \cite{Fa2}]\label{falcProp} Assume that $K \subset \R^{d}$ is compact, and
\begin{displaymath} \overline{\dim}_{\B}(K \cap U) = \overline{\dim}_{\B} K \end{displaymath}
for all open sets $U$ that intersect $K$. Then $\dim_{\p} K = \overline{\dim}_{\B} K$.
\end{proposition}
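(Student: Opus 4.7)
The plan is to establish the non-trivial direction $\overline{\dim}_{\B} K \leq \dim_{\p} K$ (the reverse inequality being immediate from the definition) by means of a Baire category argument.

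Set $s := \overline{\dim}_{\B} K$ and suppose, for contradiction, that $\dim_{\p} K < s$. By definition of packing dimension, and using the remark that we may restrict to covers by closed sets, there exist closed sets $F_{j} \subset \R^{d}$ with
\begin{displaymath} K \subset \bigcup_{j \in \N} F_{j} \quad \text{and} \quad t := \sup_{j} \overline{\dim}_{\B} F_{j} < s. \end{displaymath}
Replacing $F_{j}$ by $F_{j} \cap K$ if needed, we may further assume that each $F_{j}$ is a closed subset of the compact (hence complete) metric space $K$, and that $K = \bigcup_{j} F_{j}$.

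Now I would invoke the Baire category theorem: since $K$ is complete and is a countable union of its closed subsets $F_{j}$, at least one $F_{j_{0}}$ must have non-empty interior \emph{relative to $K$}. That is, there exists an open set $U \subset \R^{d}$ with $K \cap U \neq \emptyset$ and $K \cap U \subset F_{j_{0}}$. The monotonicity of $\overline{\dim}_{\B}$ then yields
\begin{displaymath} \overline{\dim}_{\B}(K \cap U) \leq \overline{\dim}_{\B} F_{j_{0}} \leq t < s, \end{displaymath}
which directly contradicts the standing hypothesis $\overline{\dim}_{\B}(K \cap U) = \overline{\dim}_{\B} K = s$.

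The argument is essentially a one-shot application of Baire, so there is no genuine obstacle to worry about; the only minor point requiring care is the reduction to \emph{closed} sets $F_{j}$ in the definition of packing dimension (already noted in the excerpt, via $\overline{\dim}_{\B} F = \overline{\dim}_{\B} \overline{F}$), and the observation that one may intersect each $F_{j}$ with $K$ without increasing its box dimension, so that Baire can be applied inside the complete space $K$ itself.
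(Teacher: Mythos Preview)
The paper does not supply its own proof of this proposition; it is merely quoted from Falconer's textbook \cite{Fa2} as a tool. Your Baire-category argument is correct and is essentially the standard proof given in that reference, so there is nothing to compare against and no gap to address.
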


In association with Theorem \ref{main}, we claimed that dense $G_{\delta}$-sets on the circle always have packing dimension one. In fact, the same is true for any set $B \subset S^{1}$ of the second category. To see this, cover $B$ with a countable collection of sets $F_{j}$. By definition of second category, $B$ cannot be expressed as the countable union of nowhere dense sets. This implies that the closure of $B \cap F_{j}$ must have non-empty interior for some $j$. In particular, $\overline{\dim}_{\B} F_{j} = 1$, which gives $\dim_{\p} B = 1$. 
 
 %The proofs of Theorems \ref{main} and \ref{main2} share a common general scheme. 

\begin{proof}[Proof of Theorem \ref{main}] Choose a countable dense set of directions $\{e_{1},e_{2},\ldots,\} \subset S^{1}$, and choose a sequence $(s_{j})_{j \in \N}$ such that $s_{j} \searrow 0$ as $j \to \infty$. Here is the plan. To every vector $e_{m}$, we will eventually associate countably many open arcs $J(e_{m},n)$, $n \geq 1$. The dense $G_{\delta}$-set $G \subset S^{1}$ will be defined by $G = \bigcap U_{n}$, where
\begin{displaymath} U_{n} := \bigcup_{m = 1}^{\infty} J(e_{m},n). \end{displaymath}
The set $K$ will be constructed so that
\begin{equation}\label{form28} \calH^{s_{n}}_{1/n}(K_{e}) \leq 1, \qquad e \in J(e_{m},n), \: m,n \in \N. \end{equation}
This will evidently force $\dim K_{e} = 0$ for every direction $e \in G$. We order the pairs $(e_{m},n)$ according to the following scheme:
\begin{align} & (e_{1},1)\notag\\
& (e_{1},2) \quad (e_{2},1)\notag\\
&\label{form27} (e_{1},3) \quad (e_{2},2) \quad (e_{3},1)\\
& (e_{1},4) \quad (e_{2},3) \quad (e_{3},2) \quad (e_{4},1)\notag\\
& \ldots. \notag\end{align} 
We start moving through the pairs $(e_{m},n)$ in the order indicated by \eqref{form27} -- that is, top down and from left to right. Whenever we encounter a pair $(e_{m},n)$, we will associate to it (i) an arc $J(e_{m},n)$ containing $e_{m}$, and (ii) a compact set $K(e_{m},n)$, which is the finite union of closed balls with a common diameter and disjoint interiors. The sets $K(e_{m},n)$ will all be nested, and so
\begin{displaymath} K := \bigcap_{m,n} K(e_{m},n) \end{displaymath}
will be a compact subset of $\R^{2}$.

To get the recursive procedure started, we define $K(e_{1},1) := B(0,1/2)$ and $J(e_{1},1) = S^{1}$. Then \eqref{form28} is satisfied for $m = n = 1$, no matter what $s_{1}$ is. Then, assume that we have just finished constructing the set $K_{\prev} := K(e_{m(\prev)},n(\prev))$ for some $m(\prev),n(\prev) \in \N$. We assume that $K_{\prev}$ is the union of $p \in \N$ closed balls with disjoint interiors and a common diameter $d > 0$. Let $(e_{m},n)$ be the 'next' pair in the ordering \eqref{form27}. Thus,
\begin{displaymath} (e_{m},n) = (e_{m(\prev) + 1},n(\prev) - 1) \quad \text{or} \quad (e_{m},n) = (e_{1},m(\prev) + 1). \end{displaymath}
\begin{figure}[h!]
\begin{centering}
\includegraphics[scale = 0.7]{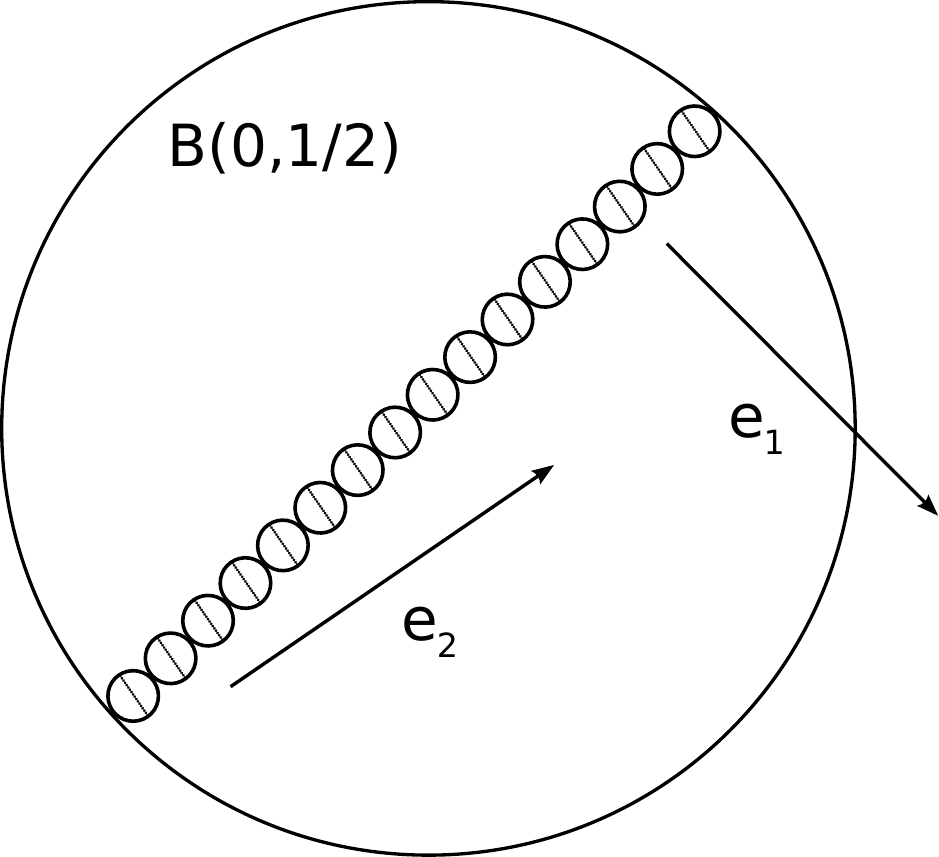}
\caption{A simultaneous depiction of $K(e_{1},1), K(e_{1},2)$ and $K(e_{2},1)$.}\label{fig1}
\end{centering}
\end{figure}
Figure \ref{fig1} shows the idea how to define the set $K(e_{m},n)$. Inside every one of the $p$ balls $B$, which constitute $K_{\prev}$, we place $q$ smaller balls on the diameter of $B$, which is perpendicular to $e_{m}$. Then the projection $\rho_{e_{m}}(K(e_{m},n))$ onto the line spanned by $e_{m}$ can be covered by $p$ intervals of of length $d/q$. The values of $p$ and $d$ only on $K_{\prev}$, whereas $q$ is a completely free parameter. We take $q$ so large that
\begin{displaymath} \calH^{s_{n}}_{1/n}(\rho_{e_{m}}[K(e_{m},n)]) \leq p \cdot \left(\frac{d}{q}\right)^{s_{n}} \leq \frac{1}{2}. \end{displaymath}
Then, we may choose $J(e_{m},n)$ to be an open arc centered at $e_{m}$ so small that
\begin{displaymath} \calH^{s_{n}}_{1/n}(\rho_{e}[K(e_{m},n)]) \leq 1, \qquad e \in J(e_{m},n). \end{displaymath}
Since $K \subset K(e_{m},n)$, this gives \eqref{form28} and completes the induction. The fact the set $K$ produced by the construction satisfies $\calH^{1}(K) > 0$ is standard: every ball in the 'previous generation' is replaced by a fairly uniformly distributed collection of (almost disjoint) new balls, and the sum of the diameters of the new balls equals the sum of the diameters of the previous balls. In fact, the construction of $K$ falls under the general scheme described in \cite[\S4.12]{Ma}, and the conclusion there is precisely that $0 < \calH^{1}(K) < \infty$.
\end{proof}

\section{The example in Theorem \ref{main2}}

%\begin{proof}[Proof of Proposition \ref{mainP}] The claim easily reduces to the well-known inequality, see \cite[Theorem 8.10]{Ma}, that $\dim_{\p}(A \times B) \leq \dim_{\p} A + \dim_{\p} B$ for $A,B \subset \R$. Indeed, let $M \colon \R^{2} \to \R^{2}$ be the non-degenerate 'change of basis' linear map defined by
%\begin{displaymath} M(x,y) = xe + y\xi. \end{displaymath} 
%Then $M$ satisfies
%\begin{displaymath} \begin{cases} \rho_{(1,0)} \circ M^{-1} = \rho_{e}\\ \rho_{(0,1)} \circ M^{-1} = \rho_{\xi}, \end{cases} \end{displaymath} 
%which yields
%\begin{align*} K & \subset \rho_{e}^{-1}[K_{e}] \cap \rho_{\xi}^{-1}[K_{\xi}]\\
%& = M\left(\rho_{(1,0)}^{-1}[K_{e}]\right) \cap M\left(\rho_{(0,1)}^{-1}[K_{\xi}]\right)\\
%& = M\left(\rho_{(1,0)}^{-1}[K_{e}] \cap \rho_{(0,1)}^{-1}[K_{\xi}]\right) = M\left[K_{e} \times K_{\xi}\right]. \end{align*} 
%Since the bilipschitz-deformation $M$ has no impact on packing dimension, we have
%\begin{displaymath} \dim_{\p} K \leq \dim_{\p} (K_{e} \times K_{\xi}) \leq \dim_{\p} K_{e} + \dim_{\p} K_{\xi}. \end{displaymath} 
%\end{proof}
In order to prove Theorem \ref{main2}, we will inductively and simultaneously construct Cantor type compact sets $K \subset \R^{2}$ and $E \subset S^{1}$ such that $\dim_{\p} K = \gamma \in [0,1]$, $\dim_{\p} E = 1$, and $\dim_{\p} K_{e} = \gamma/2$ for every direction $e \in E$. We first describe the construction of a 'generic exceptional set' $E \subset S^{1}$ with $\dim_{\p} E = 1$.  

\begin{construction}[The set $E$]\label{setE} Fix a sequence of numbers $(t_{j})_{j \in \N} \subset (0,1)$ such that $t_{j} \nearrow 1$ as $j \to \infty$. Let $(r_{j})_{j \in \Z_{+}}$ be a sequence of positive numbers, let $(n_{j})_{j \in \Z_{+}}$ be a sequence of natural numbers, and let $C_{j}^{\calI} \subset S^{1}$ be a collection of $\Gamma(j) := n_{0}n_{1} \cdots n_{j}$ points on the unit circle. Let $\calI_{j}$ be the collection of $\Gamma(j)$ closed arcs $I \subset S^{1}$ with midpoints in $C_{j}^{\calI}$ and length $\calH^{1}(I) = r_{j}$. We require the following properties from these items:
\begin{itemize} 
\item[(P0)] The values for $j = 0$ are $r_{0} = 1 = n_{0}$ and $C_{0} = \{(1,0)\}$. Hence, $\calI_{0}$ contains one arc of length one centered at the point $(1,0)$.
\item[(P1)] $r_{j} \searrow 0$ and $n_{j} \nearrow \infty$ as $j \to \infty$. Moreover, $n_{j} \nearrow \infty$ so quickly that 
\begin{equation}\label{nj} n_{j}^{1 - t_{j}}r_{j - 1}^{t_{j}} \geq 10, \qquad j \geq 1.  \end{equation} 
\item[(P2)] If $j \geq 1$, there are $n_{j}$ points of $C_{j}^{\calI}$ inside any arc $I \in \calI_{j - 1}$. The end-points of $I$ are not in $C_{j}^{\calI}$, the midpoint of $I$ is in $C_{j}^{\calI}$, and the points in $C_{j}^{\calI} \cap I$ are so evenly distributed that $d(x,y) > n_{j}^{-1}\calH^{1}(I)/10 = n_{j}^{-1}r_{j -1}/10$ for $x,y \in C_{j}^{\calI} \cap I$. 
\item[(P3)] If $j \geq 1$, the number $r_{j}$ is so small that for any $I \in \calI_{j - 1}$ the arcs in $\calI_{j}$ centered at the points in $C_{j}^{\calI} \cap I$ are disjoint and contained in $I$. 
\end{itemize} 
Now, suppose that we have chosen the numbers $n_{j}$ and $r_{j}$ and the sets $C_{j}^{\calI}$ so that properties (P0)--(P3) are in force. Then we define
\begin{displaymath} E := \bigcap_{j = 1}^{\infty} E_{j} := \bigcap_{j = 1}^{\infty} \bigcup_{I \in \calI_{j}} I. \end{displaymath}  
The sets $E_{j}$ are compact and non-empty and satisfy $E_{j} \supset E_{j + 1}$ by (P3), so $E$ is a non-empty compact subset of $S^{1}$. In order to evaluate $\dim_{\p} E$, first note that $C_{j}^{\calI} \subset E$ for any $j \geq 0$, by (P2). Next, let $U \subset S^{1}$ be an open set intersecting $E$. Then $U$ contains an arc $I \in \calI_{j - 1}$ for arbitrarily large indices $j \in \N$. This yields
\begin{displaymath} P\left(E \cap U,\frac{r_{j - 1}}{10n_{j}}\right) \geq P\left(C_{j}^{\calI} \cap I,\frac{r_{j - 1}}{10n_{j}}\right) \stackrel{\text{(P2)}}{\geq} n_{j} \stackrel{\text{(P1)}}{\geq} \frac{10}{10^{t_{j}}} \left(\frac{r_{j - 1}}{10n_{j}}\right)^{-t_{j}} \geq \left(\frac{r_{j - 1}}{10n_{j}}\right)^{-t_{j}}. \end{displaymath} 
Since $t_{j} \nearrow 1$ as $j \to \infty$, this shows that $\overline{\dim}_{\B} [E \cap U] = 1$ for any open set $U \subset S^{1}$ with $E \cap U \neq \emptyset$, and so $\dim_{\p} E = 1$ by Proposition \ref{falcProp}.
\end{construction}

The important fact here is that the choice of the numbers $r_{j}$ above is fairly arbitrary for $j \geq 1$: we may choose them as small as we wish, but, in light of \eqref{nj}, we will then have to compensate by choosing the numbers $n_{j}$ very large. The following auxiliary result will be used in constructing the examples in both Theorem \ref{main2} and \ref{bigEx}:

\begin{lemma}\label{grid} Let $G_{n} \subset \R^{2}$ be a set homothetic to the $n \times n$ grid $\{1,\ldots,n\} \times \{1,\ldots,n\} \subset \R^{2}$. Then, if $e \in S^{1}$ is the vector $e = c(1,pq^{-1}) \in S^{1}$, where $p,q \in \Z$ and $c = (1 + p^{2}q^{-2})^{-1/2}$, we have
\begin{displaymath} \card \rho_{e}(G_{n}) \leq (1 + p)(1 + q)n, \qquad n \in \N. \end{displaymath} 
\end{lemma}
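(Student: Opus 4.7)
The plan is to exploit two trivial reductions. First, since $\rho_{e}$ is linear, the cardinality $\card \rho_{e}(G_{n})$ is invariant under translations and positive dilations of $G_{n}$, so we may replace $G_{n}$ by the standard grid $\{1,\ldots,n\} \times \{1,\ldots,n\}$. Second, the scalar factor $c$ and the denominator $q$ play no role in counting: for $(i,j) \in \{1,\ldots,n\}^{2}$ we have
\begin{displaymath} \rho_{e}(i,j) = c\left(i + \frac{p}{q}j\right) = \frac{c}{q}(qi + pj), \end{displaymath}
so distinct projected values correspond bijectively to distinct integer values taken by the linear form $L(i,j) := qi + pj$ as $(i,j)$ ranges over the grid.

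Next, I would bound the range of $L$. Assuming $p,q \geq 1$ (the cases where $p$ or $q$ is zero or negative being handled by the same argument after a reflection/sign change), the integer $L(i,j)$ lies in the interval $[p+q,\, n(p+q)]$, which has length $(n-1)(p+q)$ and therefore contains at most $(n-1)(p+q)+1$ integers. Hence
\begin{displaymath} \card \rho_{e}(G_{n}) \leq (n-1)(p+q) + 1. \end{displaymath}

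To conclude, I would check the elementary inequality
\begin{displaymath} (n-1)(p+q) + 1 \leq (1+p)(1+q)n, \end{displaymath}
which follows by expanding: the right side minus the left equals $n + npq + p + q - 1 \geq 0$ whenever $n,p,q \geq 1$. This yields exactly the bound in the statement.

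There is no genuine obstacle here — the lemma is purely combinatorial and the only content is the factorisation of $\rho_{e}$ through the integer-valued form $qi + pj$. The one minor point worth double-checking is that the bound on the right is chosen generously enough to absorb the trivial edge cases (e.g.\ $p = 0$), which a direct substitution confirms.
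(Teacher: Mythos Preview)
Your argument is correct and takes a genuinely different route from the paper. The paper proves the bound by a fibre-counting argument: for each $t \in \rho_{e}(G_{n})$ it exhibits $n$ points of the form $(x,y) + k(p,-q)$, $1 \leq k \leq n$, lying in the enlarged grid $G_{n}' := \{1,\ldots,n(1+p)\} \times \{-nq+1,\ldots,n\}$, and then divides $\card G_{n}' = n^{2}(1+p)(1+q)$ by $n$. Your approach instead linearises the problem via the integer form $L(i,j) = qi + pj$ and simply bounds the length of the interval in which $L$ takes values. This is more elementary, and it actually yields the sharper intermediate bound $(n-1)(p+q)+1$, which you then relax to the lemma's stated bound. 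The paper's method, on the other hand, is closer in spirit to the energy and tube-counting arguments used elsewhere in the article. Both proofs share the same mild sloppiness about signs: the inequality as written only makes sense for $p,q \geq 0$, and your remark about reflections is the right fix (effectively replacing $p,q$ by $|p|,|q|$), which is also implicit in the paper's argument.
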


\begin{proof} We may assume that $G_{n} = \{1,\ldots,n\} \times \{1,\ldots,n\}$, since any homothety $h(\bar{x}) = r\bar{x} + \bar{v}$ commutes with projections. If $t \in \rho_{e}(G_{n})$, find a point $(x,y) \in G_{n}$ such that $x + py/q = c^{-1}t$, and note that
\begin{displaymath} (x + kp) + \frac{p(y - kq)}{q} = \frac{t}{c}, \qquad k \in \Z. \end{displaymath} 
In particular, $\rho_{e}^{-1}\{t\} \supset \{(x,y) + k(p,-q) : 1 \leq k \leq n\}$. On the other hand, since $(x,y) \in G_{n}$, such points $(x,y) + k(p,-q)$ with $1 \leq k \leq n$ are contained in the product $\{1,\ldots,n(1 + p)\} \times \{-nq + 1,\ldots,n\} =: G_{n}'$, which has cardinality $n^{2}(1 + p)(1 + q)$. Now we have shown that for every $t \in \rho_{e}(G_{n})$ there exist at least $n$ points in the set $\rho_{e}^{-1}\{t\} \cap G_{n}'$. Since the pre-images $\rho_{e}^{-1}\{t\}$ are disjoint for various $t \in \R$, this yields the inequality
\begin{displaymath} n \cdot \card \rho_{e}(G_{n}) \leq \card G_{n}' = n^{2}(1 + p)(1 + q), \end{displaymath}
or $\card \rho_{e}(G_{n}) \leq n(1 + p)(1 + q)$, as claimed.
\end{proof}

\begin{proof}[Proof of Theorem \ref{main2}] The idea is to construct the set $K$ by an inductive procedure, and, in the process, choose the parameters of the 'generic' exceptional set $E$ so that (P0)--(P3) are satisfied, and $\dim_{\p} K_{e} = \gamma/2$ for every direction $e \in E$. The notation related to the construction of $E$ will be the same as in Construction \ref{setE}.

To construct $K$, we will define finite collections $\calQ_{j}$, $j \in \N$, of closed squares in $Q \subset \R^{2}$ of equal side-lengths $\ell(Q) =: \ell_{j}$ and write $K_{j} := \bigcup_{Q \in \calQ_{j}} Q$. The set $K$ is then be defined by $K = \bigcap_{j \in \N} K_{j}$. The collection of all midpoints of the squares in $\calQ_{j}$ is denoted by $C_{j}^{\calQ}$. Assume that $0 < \gamma < 2$, as we may, and fix a sequence $(\gamma_{j})_{j \in \N} \subset (0,2)$ such that $\gamma_{j} \nearrow \gamma$ as $j \to \infty$. We maintain the following invariants throughout the process of constructing the squares $\calQ_{j}$:
\begin{itemize}
\item[(i)] The collection $\calQ_{0}$ consists of only one square, namely $Q_{0} = [0,1]^{2}$. For every $j \geq 1$, we have $C^{\calQ}_{j - 1} \subset K_{j} \subset K_{j - 1}$.
\item[(ii)] For every $j \geq 0$, the collection $C^{\calI}_{j}$ consists of some points of the form $c(1,pq^{-1}) \in S^{1}$, where $p,q \in \Z$, $q \neq 0$, and $c = (1 + p^{2}q^{-2})^{-1/2}$. Moreover, $C^{\calI}_{j} \supset C^{\calI}_{j - 1}$ for every $j \geq 1$.
\item[(iii)] Whenever $j \geq 0$, $e \in I \in \calI_{j}$ and $\ell_{j} \leq l \leq 1$, we have
\begin{displaymath} N(\rho_{e}(K_{j}),l) \leq l^{-\gamma/2}. \end{displaymath} 
We also need the following technical hypothesis, which is only required for the induction to work: replace every square $Q \in \calQ_{j}$ by a smaller co-centric closed square $Q^{l}$ of side-length $l \leq \ell_{j}$ to obtain a new collection of squares $\calQ_{j}^{l}$, see Figure \ref{fig3}. Denote the union of these squares by $K_{j}^{l}$. Then
\begin{displaymath} N(\rho_{e}(K_{j}^{l}),l) \leq l^{-\gamma/2} \end{displaymath}
for all $e \in I \in \calI_{j}$.
\item[(iv)] If $j \geq 1$ and $Q \in \calQ_{j - 1}$, then $P(C_{j}^{\calQ} \cap Q,\ell_{j}/2) \geq \ell_{j}^{-\gamma_{j}}$. 
\end{itemize}
Let us now initiate the induction. Condition (i) forces us to choose $\calQ_{0} = \{Q_{0}\} = \{[0,1]^{2}\}$, $C_{0}^{\calQ} = \{(1/2),1/2)\}$ and $\ell_{0} = 1$. It is clear that properties (i)--(iii) are satisfied for these parameters, and (iv) says nothing at this point. In particular, the 'technical hypothesis' in (iii) is satisfied, since the set $K_{0}^{\ell}$ is nothing but a single square of side-length $\ell < 1$. Also, recall that $n_{0} = 1 = r_{0}$ and $C^{\calI}_{0} = \{(1,0)\} \in S^{1}$ according to (P0) of Construction \ref{setE}.

Next, let us assume that $j \geq 1$ and $\calQ_{j - 1}$, $C^{\calQ}_{j - 1}$, $\ell_{j - 1}$, $\calI_{j - 1}$, $n_{j - 1}$, $r_{j - 1}$ and $C^{\calI}_{j - 1}$ have already been defined so that (i)--(iii) hold. We will now describe how to define the parameters corresponding to the index $j$, so that all the conditions (i)--(iii) are satisfied (we exclude (iv), because \textbf{assuming} property (iv) for the index $j - 1$ is not necessary to acquire it for the index $j$). First, choose $n_{j}$ so large that (P1) in Construction \ref{setE} is satisfied, that is, $n_{j}^{1 - t_{j}}r_{j - 1}^{t_{j}} \geq 10$. Then, inside every interval $I \in \calI_{j - 1}$, place $n_{j}$ points of the form $c(1,pq^{-1})$, $p,q \in \Z$, $q \neq 0$, $c = (1 + p^{2}q^{-2})^{-1/2}$, so that the endpoints of $I$ are excluded and the midpoint of $I$ is included in the selection (this is possible by (ii)), and so that the mutual distance of any pair of these points is at least $n_{j}^{-1}\calH^{1}(I)/10 = n_{j}^{-1}r_{j - 1}/10$. Points of the correct form are dense on $S^{1}$, so the existence of such a selection is no issue -- as far as we are not interested in how large $p$ and $q$ can get. The collection of all such points, for every interval $I \in \calI_{j - 1}$, is the new midpoint set $C^{\calI}_{j}$. Now (P2) is satisfied.
\begin{figure}[h!]
\begin{centering}
\includegraphics[scale = 0.7]{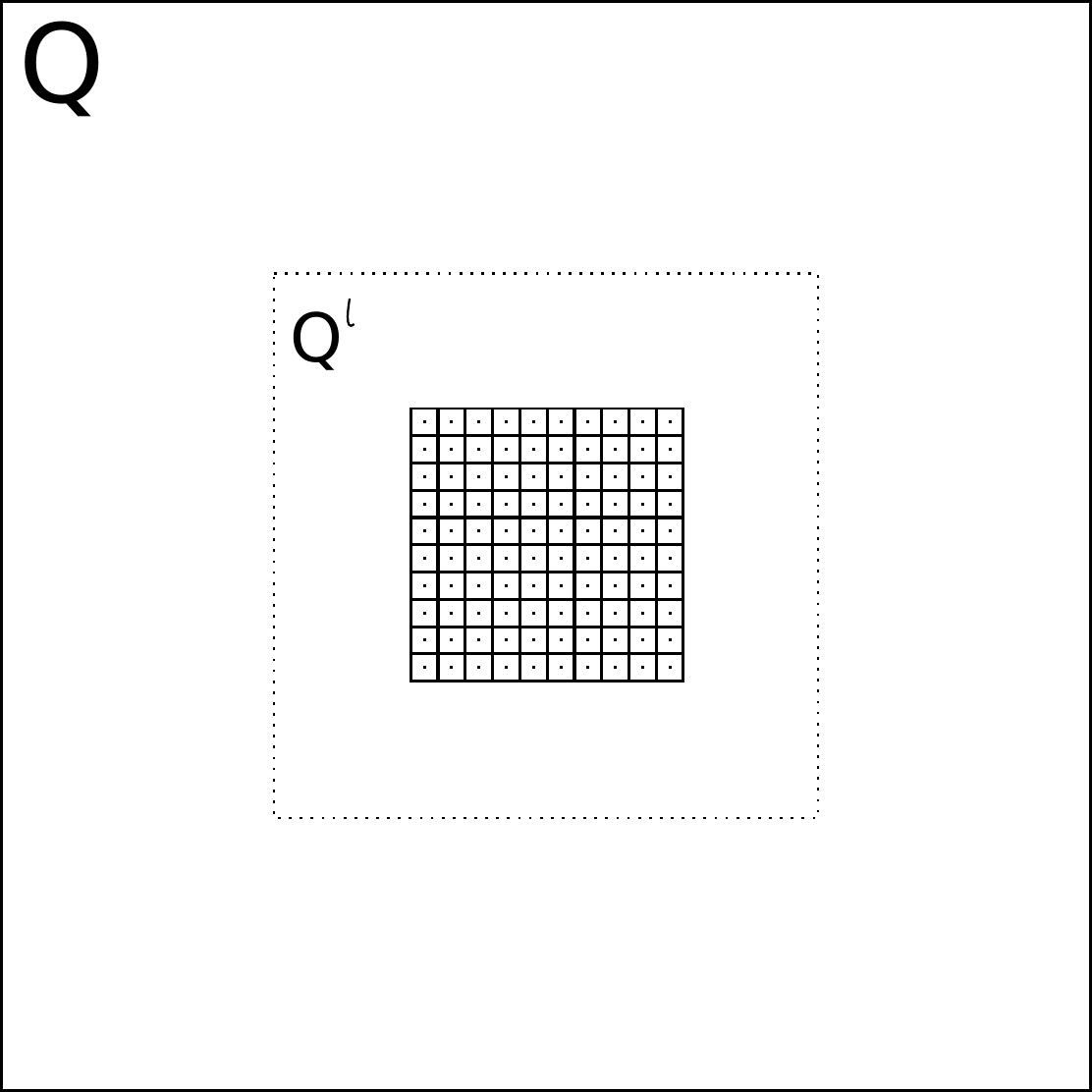}
\caption{A square $Q \in \calQ_{j - 1}$, its children in $\calQ_{j}$, and a square of the form $Q^{l} \in \calQ_{j - 1}^{l}$, as in the 'technical hypothesis' of (iii).}\label{fig3}
\end{centering}
\end{figure}

Next, we will define $\ell_{j}$ and the collection $\calQ_{j}$. Write $q_{j - 1} := \card \calQ_{j - 1}$ and $M_{j} := \max\{(1 + p)(1 + q) : c(1,pq^{-1}) \in C^{\calI}_{j}\}$. Choose $\ell_{j}$ so small that
\begin{equation}\label{ell} \ell_{j}^{1 - \gamma_{j}/2} < \ell_{j - 1} \quad \text{and} \quad q_{j - 1} \cdot \ell_{j}^{(\gamma - \gamma_{j})/2} \leq \min\left\{\frac{1}{4},\frac{1}{10M_{j}}\right\}. \end{equation}
These choices can clearly be made so that $\ell_{j}^{-\gamma_{j}/2}$ is an integer, and so $\ell_{j}^{-\gamma_{j}}$ is the square of an integer. Now, inside each square $Q \in \calQ_{j - 1}$, place $\ell_{j}^{-\gamma_{j}}$ squares of side-length $\ell_{j}$, so that the \textbf{union} of the new squares also forms a square $Q'$ of side-length $\ell(Q') = \ell_{j} \cdot \ell_{j}^{-\gamma_{j}/2} = \ell_{j}^{1 - \gamma_{j}/2} <  \ell_{j - 1} = \ell(Q)$, and the midpoint of $Q$ coincides with the midpoint of $Q'$, see Figure \ref{fig3}. The collection $\calQ_{j}$ then consists of all the $q_{j} := q_{j - 1} \cdot \ell^{-\gamma_{j}}$ small squares (of side-length $\ell_{j}$) so obtained, for every choice of $Q \in \calQ_{j - 1}$. To prove (iv), simply note that a packing of the new midpoint set $C_{j}^{\calQ}$ intersected with any square $Q \in \calQ_{j - 1}$ is obtained by placing a ball of radius $\ell_{j}/2$ centered at every point in $C_{j}^{\calQ} \cap Q$. This yields
\begin{displaymath} P(C_{j}^{\calQ} \cap Q,\ell_{j}/2) \geq \ell_{j}^{-\gamma_{j}}, \end{displaymath}  
which is (iv).

It remains to define $r_{j}$ (and $\calI_{j}$, of course, but this is completely determined by $r_{j}$ and $C^{\calI}_{j}$) and prove (iii). Set
\begin{displaymath} r_{j} := \frac{1}{2 \cdot (4q_{j}^{2})^{1/\gamma}}. \end{displaymath}
We start by proving the 'technical hypothesis' of (iii). Note that $C_{j}^{\calQ} \cap Q$ is a grid homothetic to $\{1,\ldots,\ell_{j}^{-\gamma_{j}/2}\} \times \{1,\ldots,\ell_{j}^{-\gamma_{j}/2}\}$, for any $Q \in \calQ_{j - 1}$ Hence, if $e  = c(1,pq^{-1}) \in C_{j}^{\calI}$, the previous lemma shows that 
\begin{equation}\label{form3} \card \rho_{e}(C_{j}^{\calQ}) \leq q_{j - 1}(1 + p)(1 + q)\ell_{j}^{-\gamma_{j}/2} \leq M_{j} \cdot q_{j - 1} \cdot \ell_{j}^{-\gamma_{j}/2} \stackrel{\eqref{ell}}{\leq} \frac{\ell_{j}^{-\gamma/2}}{10}. \end{equation}
Now, fix $l \leq \ell_{j}$ and consider the squares $\calQ_{j}^{l} := \{Q^{l} : Q \in \calQ_{j}\}$ as defined in (iii). Recall that these are the squares cocentric with the squares in $\calQ_{j}$ but with side-length only $l$. The $\rho_{e}$-projection of the set $K_{j}^{l} = \bigcup_{Q \in \calQ_{j}} Q^{l}$ consists of intervals of length at most $\sqrt{2} \cdot l$ with midpoints in the set $\rho_{e}(C_{j}^{\calQ})$. Hence, we may infer from \eqref{form3} that
\begin{displaymath}  N(\rho_{e}(K_{j}^{l}),l) \leq \frac{\ell_{j}^{-\gamma/2}}{5} \leq \frac{l^{-\gamma/2}}{5}, \qquad e \in C_{j}^{\calI}. \end{displaymath} 
Next, note that if $\xi \in B(e,l) \cap S^{1}$ with $e \in C_{j}^{\calI}$, we still have
\begin{displaymath} N(\rho_{\xi}(K_{j}^{l}),l) \leq l^{-\gamma/2}, \end{displaymath} 
since the intervals of length no more than $\sqrt{2} \cdot l$ that make up $\rho_{\xi}(K_{j}^{l})$ are certainly covered by the intervals that constitute $\rho_{e}(K_{j}^{l})$, stretched by a factor of five. In particular, this shows that $N(\rho_{\xi}(K^{l}_{j}),l) \leq l^{-\gamma/2}$, whenever $l \geq 2r_{j}$ and $\xi \in I \in \calI_{j}$ (then $\xi$ is at distance no more than $2r_{j} \leq l$ from one of the points in $C_{j}^{\calI}$). On the other hand, if $l \leq 2r_{j} = 1/(4q_{j}^{2})^{1/\gamma}$, we have the trivial estimate
\begin{displaymath} N(\rho_{\xi}(K_{j}^{l}),l) \leq 2q_{j} \leq l^{-\gamma/2}, \end{displaymath}
which follows from the fact that $\rho_{\xi}(K_{j}^{l})$ is the union of $q_{j}$ intervals of length no more than $\sqrt{2} \cdot l$. This proves the 'technical hypothesis' of (iii).

Finally, it is time to prove the first part of (iii). Fix $l \in [\ell_{j},1]$ and $e \in I \in \calI_{j}$. If $l > \ell_{j - 1}$, we simply note that $e \in J$ for some $J \in \calI_{j - 1}$ and use the induction hypothesis in (i) and (iii) to conclude that
\begin{displaymath} N(\rho_{e}(K_{j}),l) \leq N(\rho_{e}(K_{j - 1}),l) \leq l^{-\gamma/2}. \end{displaymath}
Next, recall that the squares of $\calQ_{j}$ inside any fixed square $Q \in \calQ_{j - 1}$ are arranged so that they form a square $Q'$ of side-length $\calL_{j} := \ell_{j}^{1 - \gamma_{j}/2} < \ell_{j - 1}$, which has the same center as $Q$. For any $l \in [\calL_{j},\ell_{j - 1}]$, we then note that the union of these squares $\{Q' : Q \in \calQ_{j - 1}\}$ is contained in the union $K_{j - 1}^{l}$ of the squares $\calQ_{j - 1}^{l} = \{Q^{l} : Q \in \calQ_{j - 1}\}$, as defined in the 'technical hypothesis' of (iii). This means that
\begin{displaymath} N(\rho_{e}(K_{j}) ,l) \leq N(\rho_{e}(K_{j - 1}^{l}),l) \leq l^{-\gamma/2}, \qquad e \in I \in \calI_{j - 1}, \end{displaymath}
by the induction hypothesis. In particular, this holds for $e \in I \in \calI_{j}$. We are left with the case $l \in [\ell_{j},\calL_{j}]$. The projection $\rho_{e}(K_{j})$ in \textbf{any} direction $e \in S^{1}$ is the union of $q_{j - 1} = \card \calQ_{j - 1}$ intervals of length no more than $\sqrt{2} \cdot \calL_{j}$. Since $l \leq \calL_{j}$, such a union can be covered by $4q_{j - 1} \cdot \calL_{j}/l$ intervals of length $l$. This and \eqref{ell} yields the estimate
\begin{align*} N(\rho_{e}(K_{j}),l) \cdot l^{\gamma/2} & \leq 4q_{j - 1} \cdot \frac{\calL_{j}}{l} \cdot l^{\gamma/2}\\
& = 4q_{j - 1} \cdot \ell_{j}^{1 - \gamma_{j}/2} \cdot l^{\gamma/2 - 1}\\
& \leq 4q_{j - 1} \cdot \ell_{j}^{(\gamma - \gamma_{j})/2} \stackrel{\eqref{ell}}{\leq} 1.  \end{align*} 
The proof of (iii) is finished. This completes the inductive step and the construction of the sets $E \subset S^{1}$ and $K \subset [0,1]^{2}$. The construction of the set $E$ abides by the scheme in Construction \ref{setE}, so we have $\dim_{\p} E = 1$. It only remains to verify that $\dim_{\p} K = \gamma$ and $\dim_{\p} K_{e} \leq \gamma/2$ for every direction $e \in E$. All the midpoint sets $C_{j}^{\calQ}$ are contained in $K$ by (i), so (iv) combined with Proposition \ref{falcProp} gives $\dim_{\p} K = \gamma$. If $e \in E$, then, for all $j \in \N$, we have $e \in I$ for some arc $I \in \calI_{j}$. Now we may deduce from (iii) that
\begin{displaymath} \dim_{\p} K_{e} \leq \overline{\dim}_{\B} K_{e} \leq \limsup_{l \to 0} \frac{\log N(K_{e},l)}{- \log l} \leq \frac{\gamma}{2}. \end{displaymath}
This completes the proof of Theorem \ref{main2}.
\end{proof} 

\section{Proof of Theorem \ref{packingCategory}}

The proof of Theorem \ref{packingCategory} divides into three parts. First, we reduce the situation from analytic sets to compact sets using a lemma from \cite{FH}. Second, we make a further reduction showing that it suffices to prove Theorem \ref{packingCategory} for upper box dimension $\overline{\dim}_{\B}$ instead of packing dimension $\dim_{\p}$. Third, we prove Theorem \ref{packingCategory} for upper box-dimension using a simple combinatorial approach.

Let $A \subset \R^{2}$ be a compact set, and let
\begin{displaymath} \m := \sup\{\dim_{\p} A_{e} : e \in S^{1}\}. \end{displaymath}
We will establish Theorem \ref{packingCategory} by showing that
\begin{equation}\label{form29} \underline{\dim}_{\MB} \left\{e \in S^{1} : \dim_{\p} A_{e} < \sigma \right\} \leq 1 + \sigma - \m, \qquad 0 \leq \sigma \leq \m, \end{equation}
where $\underline{\dim}_{\MB}$ denotes the \emph{lower modified box-dimension}
\begin{displaymath} \underline{\dim}_{\MB} B := \inf \left\{\sup_{j} \underline{\dim}_{\B} F_{j} : B \subset \bigcup_{j \in \N} F_{j}\right\}, \end{displaymath}
and $\underline{\dim}_{\B}$ is the lower box-dimension
\begin{displaymath} \underline{\dim}_{\B} F := \liminf_{\delta \to 0} \frac{\log N(F,\delta)}{-\log \delta}. \end{displaymath}
We may infer from \eqref{form29} and the definition of $\m$ that the set $\{e \in S^{1} : \dim_{\p} A_{e} \neq \m\}$ has zero length. Moreover, if we manage to prove \eqref{form29} for $\sigma < \m$, we can, by definition of $\underline{\dim}_{\MB}$, cover the set $\{e \in S^{1} : \dim_{\p} A_{e} < \sigma\}$ with countably many sets $F_{j}$ with $\underline{\dim}_{\B} F_{j} < 1$. The sets $F_{j}$ are nowhere dense, so $\{e \in S^{1} : \dim_{\p} A_{e} < \sigma\}$ is meagre by definition. The set $\{e \in S^{1} : \dim_{\p} A_{e} < \m\}$ is then meagre as well.

\subsection{First reduction} We cite the planar version of \cite[Lemma 7]{FH}.
\begin{lemma}\label{FHLemma} Let $0 \leq t < 1$, let $e \in S^{1}$, and let $A \subset \R^{2}$ be an analytic set such that $0 \leq t < \dim_{\p} A_{e}$. Then there exists a compact set $K \subset A$ with $t < \dim_{\p} K_{e}$.
\end{lemma}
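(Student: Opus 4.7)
The plan is to find a compact $L \subset A_{e}$ with $\dim_{\p} L > t$ and then lift $L$ back to a compact $K \subset A$ via a universally measurable section of $\rho_{e}|_{A}$, which Lusin's theorem will allow us to restrict to a continuous section on a still-large compact piece. Since $\rho_{e}$ is continuous and $A$ is analytic, the projection $A_{e}$ is an analytic subset of $\R$, so the standard descriptive set theoretic machinery (uniformisation, Lusin) is available.

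First, I would produce the target compact set in $\R$. Pick $s \in (t,\dim_{\p} A_{e})$, so that $\calP^{s}(A_{e}) > 0$. The Joyce--Preiss theorem on packing measures of analytic sets then yields a compact $L \subset A_{e}$ with $0 < \calP^{s}(L) < \infty$, and in particular $\dim_{\p} L \geq s > t$. The restriction $\mu := \calP^{s}|_{L}$ is then a finite Borel measure on the compact metric space $L$, hence Radon.

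Second, I would construct a measurable section. The set
\begin{displaymath} R := \{(x,y) \in L \times \R^{2} : y \in A \text{ and } \rho_{e}(y) = x\} \end{displaymath}
is analytic -- the intersection of the analytic set $L \times A$ with the closed graph $\{\rho_{e}(y) = x\}$ -- and its projection to $L$ equals $L$ because $L \subset A_{e}$. The Jankov--von Neumann uniformisation theorem then provides a universally measurable map $\sigma : L \to \R^{2}$ with $(x,\sigma(x)) \in R$ for every $x \in L$; equivalently, $\sigma(L) \subset A$ and $\rho_{e} \circ \sigma = \Id_{L}$.

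Third, I would apply Lusin's theorem to the universally measurable $\sigma$ against the Radon measure $\mu$, obtaining a compact $L' \subset L$ with $\mu(L') \geq \mu(L)/2 > 0$ on which $\sigma$ is continuous. Setting $K := \sigma(L')$ produces a compact subset $K \subset A$ with $K_{e} = \rho_{e}(K) = L'$, whence
\begin{displaymath} \dim_{\p} K_{e} = \dim_{\p} L' \geq s > t, \end{displaymath}
using $\calP^{s}(L') = \mu(L') > 0$. The main obstacle is the first step: the fact that an analytic set with packing dimension exceeding $t$ contains a compact subset of packing dimension $> t$ is nontrivial, since packing dimension -- unlike Hausdorff dimension -- is not automatically inner regular, and the Joyce--Preiss theorem is really needed. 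The restriction $t < 1$ plays no role in this scheme and is presumably imposed for the lemma's applications.
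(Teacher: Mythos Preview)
The paper does not prove this lemma at all; it is quoted directly from \cite[Lemma 7]{FH}, so there is no ``paper's own proof'' to compare against. Your argument is correct: Joyce--Preiss gives a compact $L \subset A_{e}$ with positive finite $\calP^{s}$-measure, Jankov--von Neumann provides a universally measurable section over $L$, and Lusin (applicable since universally measurable implies $\mu$-measurable for every Radon $\mu$) upgrades this to a continuous section on a compact $L'$ of positive $\calP^{s}$-measure, whose image is the desired $K$. Your remark that the hypothesis $t < 1$ is not used in the argument is also accurate.

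It is worth noting that the paper itself uses an almost identical manoeuvre a few lines later, in the ``Second reduction'': there it starts from a compact $K$, extracts via Joyce--Preiss a compact $R \subset K_{\xi}$ with $0 < \calP^{m}(R) < \infty$, and then lifts $\calP^{m}|_{R}$ back to $K$ using \cite[Theorem 1.20]{Ma} (a measure-lifting result for continuous surjections between compact spaces). Your proof is the natural extension of that idea to the analytic case, where the extra descriptive-set-theoretic step (uniformisation plus Lusin) is needed because $A$ is merely analytic rather than compact.
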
 
It follows immediately that it suffices to prove the bound \eqref{form29} for compact sets only. Namely, if $A \subset \R^{2}$ is an analytic set with $\m = \m(A) > 0$, we may use Lemma \ref{FHLemma} to find a compact set $K \subset A$ with $m := \sup\{\dim_{\p} K_{e} : e \in S^{1}\}$ arbitrarily close to $\m$. Then
\begin{displaymath} \underline{\dim}_{\MB} \{e \in S^{1} : \dim_{\p} A_{e} < \sigma\} \leq \underline{\dim}_{\MB} \{e \in S^{1} : \dim_{\p} K_{e} < \sigma\} \leq 1 + \sigma - m \end{displaymath}
for $0 \leq \sigma \leq m$, assuming that we have \eqref{form29} for all compact sets. Letting $m \nearrow \m$ gives \eqref{form29} for $A$.

\subsection{Second reduction} Assume that we know how to prove the following.
\begin{thm}\label{boxCategory} Let $A \subset \R^{2}$ be a set, and let
\begin{displaymath} \m_{\B} := \sup \{\overline{\dim}_{\B} A_{e} : e \in S^{1}\}. \end{displaymath}
Then
\begin{equation}\label{form39} \underline{\dim}_{\MB} \{e \in S^{1} : \overline{\dim}_{\B} A_{e} < \sigma\} \leq 1 + \sigma - \m_{\B}, \qquad 0 \leq \sigma \leq \m_{\B}. \end{equation}
\end{thm}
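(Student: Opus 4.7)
The plan is to cover the exceptional set by a countable family of sets with controlled lower box dimension, and to bound each of them by an incidence count in the spirit of a classical double-counting / energy argument. Write $E := \{e \in S^{1} : \overline{\dim}_{\B} A_{e} < \sigma\}$, and for each rational $s \in [0, \sigma)$ and each $N \in \N$ set
\[
F_{s, N} := \{e \in S^{1} : N(A_{e}, \delta) \leq \delta^{-s} \text{ for every } 0 < \delta < 1/N\}.
\]
Then $E = \bigcup_{s, N} F_{s, N}$ is a countable union and, by the definition of $\underline{\dim}_{\MB}$, it suffices to prove $\underline{\dim}_{\B} F_{s, N} \leq 1 + s - \m_{\B}$ for every rational $s \in [0, \sigma)$ and every $N \in \N$.

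Fix $\epsilon > 0$. Since $\m_{\B}$ is a supremum, I would pick $e_{0} \in S^{1}$ with $\overline{\dim}_{\B} A_{e_{0}} > \m_{\B} - \epsilon$ and a sequence $\delta_{k} \searrow 0$ with $\delta_{k} < 1/N$ and $n_{k} := N(A_{e_{0}}, \delta_{k}) \geq \delta_{k}^{-\m_{\B} + 2\epsilon}$. For each $k$, choose $P_{k} = \{a_{1}, \ldots, a_{n_{k}}\} \subset A$ whose $\rho_{e_{0}}$-projections are $\delta_{k}$-separated. The goal is to show that for the scale $\eta_{k} := \delta_{k}^{\m_{\B} - 3\epsilon}$ one has
\[
N(F_{s, N}, \eta_{k}) \lesssim \delta_{k}^{-s}.
\]
This gives $N(F_{s, N}, \eta_{k}) \lesssim \eta_{k}^{-s/(\m_{\B} - 3\epsilon)}$ along a sequence $\eta_{k} \to 0$, so $\underline{\dim}_{\B} F_{s, N} \leq s/(\m_{\B} - 3\epsilon)$; letting $\epsilon \to 0$ gives $\underline{\dim}_{\B} F_{s, N} \leq s/\m_{\B}$. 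Because $A_{e} \subset \R$ forces $\m_{\B} \leq 1$, an elementary calculation shows $s/\m_{\B} \leq 1 + s - \m_{\B}$ whenever $0 \leq s \leq \m_{\B} \leq 1$, and the target estimate \eqref{form39} follows.

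For the displayed discrete bound I would run the following incidence argument. Given $e \in F_{s, N}$, the projection $\rho_{e}(P_{k}) \subset A_{e}$ is contained in at most $M := \delta_{k}^{-s}$ intervals of length $\delta_{k}$, so Cauchy--Schwarz produces at least $n_{k}^{2}/(4M)$ ordered pairs $(i, j)$ with $i \neq j$ and $|\rho_{e}(a_{i} - a_{j})| \leq \delta_{k}$, equivalently $e \in B_{ij} := \{\xi \in S^{1} : |\rho_{\xi}(a_{i} - a_{j})| \leq \delta_{k}\}$. A direct computation shows each $B_{ij}$ is a union of two arcs of total length $\lesssim \delta_{k}/|a_{i} - a_{j}|$. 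The quantitative input is the energy bound coming from the $\delta_{k}$-separation of the $e_{0}$-projections: ordering the $a_{i}$ by their $e_{0}$-projection forces $|a_{i} - a_{j}| \geq \delta_{k}|i - j|$, hence
\[
\sum_{i \neq j} \frac{\delta_{k}}{|a_{i} - a_{j}|} \leq \sum_{i} \sum_{j \neq i} \frac{1}{|i - j|} \lesssim n_{k} \log n_{k}.
\]
Double-counting the pairs $(e, (i, j))$ with $e$ in an arbitrary $\eta_{k}$-separated subset $G \subset F_{s, N}$ and $e \in B_{ij}$ then gives
\[
|G| \cdot \frac{n_{k}^{2}}{4M} \leq \sum_{i \neq j} \#\{e \in G : e \in B_{ij}\} \lesssim \sum_{i \neq j} \max\Bigl(1, \frac{\delta_{k}}{\eta_{k} |a_{i} - a_{j}|}\Bigr) \lesssim n_{k}^{2} + \frac{n_{k} \log n_{k}}{\eta_{k}},
\]
and the choice of $\eta_{k}$ ensures $\eta_{k} n_{k} \gtrsim \log n_{k}$, so $|G| \lesssim M = \delta_{k}^{-s}$. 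The delicate point is precisely this calibration of $\eta_{k}$: it must be small enough to yield a useful lower-box-dimension bound, yet large enough (relative to $1/n_{k}$) for the energy estimate, rather than the trivial term $n_{k}^{2}$, to dominate on the right-hand side.
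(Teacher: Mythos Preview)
Your argument is correct and follows the same overall strategy as the paper: cover the exceptional set by countably many sets of the form $F_{s,N}$ (the paper calls them $E_i$), extract a finite subset $P_k\subset A$ whose projection in a near-optimal direction $e_0$ is $\delta_k$-separated, and run a double-counting energy argument. The paper packages the combinatorial step as a separate proposition about $(\delta,1)$-sets, whereas you exploit the equivalent observation $|a_i-a_j|\ge \delta_k|i-j|$ after ordering by $e_0$-projection; these give the same energy bound $\sum_{i\ne j}\delta_k/|a_i-a_j|\lesssim n_k\log n_k$.

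The one genuine difference is the scale at which you count directions. The paper takes $G$ to be $\delta_k$-separated and obtains $N(E_i,\delta_k)\lesssim \delta_k^{m-\sigma-1-\varepsilon}$, leading directly to $\underline{\dim}_{\B} E_i\le 1+\sigma-\m_{\B}$. You instead take $G$ to be $\eta_k$-separated with the much coarser $\eta_k=\delta_k^{\m_{\B}-3\epsilon}$, which kills the $1/\eta_k$ term in the upper bound and leaves $|G|\lesssim \delta_k^{-s}$. This yields the \emph{stronger} conclusion $\underline{\dim}_{\B} F_{s,N}\le s/\m_{\B}$, which you then relax to the stated estimate via $s/\m_{\B}\le 1+s-\m_{\B}$ (valid since projections lie in $\R$, so $\m_{\B}\le 1$). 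So your route not only recovers \eqref{form39} but in fact improves it; the paper's proof is slightly more modular but forfeits this sharpening by counting at the finest scale.
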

Then, we claim that we can also prove \eqref{form29} \textbf{for compact sets}. This reduction is based on the following lemma, which will also be useful in the next section.
\begin{lemma}\label{packingIsBox} Let $\Dim$ be any countably stable notion of dimension, and let $\sigma,\beta > 0$. Suppose that there exist a Borel regular measure $\mu$ and a $\mu$-measurable set $B \subset \R^{2}$ such that $\mu(B) > 0$, and
\begin{displaymath} \Dim \{e \in S^{1} : \dim_{\p} B_{e} < \sigma\} > \beta. \end{displaymath}
Then there exists a compact set $K \subset B$ with $\mu(K) > 0$ such that 
\begin{displaymath} \Dim \{e \in S^{1} : \overline{\dim}_{\B} K_{e} < \sigma\} > \beta. \end{displaymath}
\end{lemma}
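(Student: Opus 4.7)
My plan is a three-stage reduction leading to an application of countable stability of $\Dim$.

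First, I reduce to the case that $B$ is compact. Since $\mu$ is Borel regular with $\mu(B) > 0$, there exists a compact $B_{0} \subseteq B$ with $\mu(B_{0}) > 0$. The inclusion $(B_{0})_{e} \subseteq B_{e}$ gives $\dim_{\p} (B_{0})_{e} \leq \dim_{\p} B_{e}$, so the hypothesis $\Dim \{e : \dim_{\p} B_{e} < \sigma\} > \beta$ transfers to $B_{0}$; I henceforth assume $B$ itself is compact.

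Next, by countable stability of $\Dim$ applied to
\[
\{e \in S^{1} : \dim_{\p} B_{e} < \sigma\} = \bigcup_{\sigma' \in (0,\sigma) \cap \Q} \{e : \dim_{\p} B_{e} \leq \sigma'\},
\]
I fix a rational $\sigma' \in (0, \sigma)$ with $\Dim E' > \beta$, where $E' := \{e \in S^{1} : \dim_{\p} B_{e} \leq \sigma'\}$. For each $e \in E'$, the definition of packing dimension supplies compact sets $F_{j}^{e} \subset \R$ with $B_{e} = \bigcup_{j} F_{j}^{e}$ and $\overline{\dim}_{\B} F_{j}^{e} \leq \sigma'$, and by $\sigma$-additivity applied to the pushforward measure $\rho_{e\ast}(\mu|_{B})$, some $F_{j(e)}^{e}$ satisfies $\mu(B \cap \rho_{e}^{-1}(F_{j(e)}^{e})) > 0$. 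Setting $K^{e} := B \cap \rho_{e}^{-1}(F_{j(e)}^{e})$, we obtain for each $e \in E'$ a compact subset of $B$ with positive $\mu$-measure and $\overline{\dim}_{\B} (K^{e})_{e} \leq \sigma' < \sigma$.

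The crux -- and the main obstacle -- is to replace this $e$-dependent collection $\{K^{e}\}_{e \in E'}$ by a single \emph{countable} family $\{K_{n}\}_{n \in \N}$ of compact subsets of $B$ with $\mu(K_{n}) > 0$, such that for each $e \in E'$ there exists $n$ with $\overline{\dim}_{\B}(K_{n})_{e} < \sigma$. Granted such a family, countable stability of $\Dim$ yields an index $n_{0}$ with $\Dim\{e \in E' : \overline{\dim}_{\B}(K_{n_{0}})_{e} < \sigma\} > \beta$, and $K := K_{n_{0}}$ is the desired compact set.

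To build the countable family I would quantise over rational parameters as follows. Fix a rational $\sigma'' \in (\sigma', \sigma)$, fix a countable dense set $D \subset S^{1}$, and, for each rational quadruple $(e_{0}, C, k_{0}, \eta) \in D \times \Q_{+} \times \N \times \Q_{+}$ and each finite combinatorial choice of, at each dyadic scale $2^{-k}$ with $k \geq k_{0}$, a collection of at most $C \cdot 2^{k\sigma''}$ dyadic intervals of length $2^{-k}$ with rational endpoints (a countable data stream, by enumerating finitely supported choices up to each truncation and taking a countable diagonal union), associate the compact set obtained by intersecting $B$ with $\rho_{e_{0}}^{-1}$ of the resulting compact limit set $V$, and retain only those with $\mu \geq \eta$. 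For each $e \in E'$, the closed set $F_{j(e)}^{e}$ can be approximated at every dyadic scale by a rational dyadic union whose covering numbers respect the bound defining $\overline{\dim}_{\B} \leq \sigma''$, and a nearby $e_{0} \in D$ will transfer this bound to $\rho_{e}$ up to a constant stretching factor, by the same elementary comparison used in the proof of Theorem \ref{main2}. Verifying that the matching truly lands inside the prescribed countable family, and that a positive measure lower bound can be extracted uniformly across this matching, is where the bulk of the combinatorial bookkeeping would lie; once it is done, countable stability concludes the proof.
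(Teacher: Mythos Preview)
Your first two reductions are fine, and you correctly identify that the crux is to replace the $e$-dependent family $\{K^{e}\}$ by a single countable one. However, the quantisation scheme you sketch does not work as stated. A compact set $V \subset \R$ with $\overline{\dim}_{\B} V \leq \sigma''$ is determined by its covering behaviour at \emph{all} scales, so encoding it requires an infinite sequence of dyadic-interval choices; there are uncountably many such sequences, and ``enumerating finitely supported choices up to each truncation and taking a countable diagonal union'' does not yield a countable family of compact sets with the required box-dimension control. Even if one restricts to sets described by some countable recipe, there is no reason the particular $F_{j(e)}^{e}$ --- or anything close enough to inherit simultaneously the box-dimension bound \emph{and} a uniform positive-measure lower bound --- should appear in the list. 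You flag exactly this as ``where the bulk of the combinatorial bookkeeping would lie'', but it is not bookkeeping: it is a genuine obstruction.

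The paper's argument bypasses this entirely with a different idea. One first chooses the compact $\tilde{K} \subset B$ to lie inside $\spt \mu$, so that $\mu(\tilde{K} \cap U) > 0$ for \emph{every} open $U$ meeting $\tilde{K}$. The countable family is then simply $\{\tilde{K} \cap \overline{U}_{j}\}_{j}$, where $(U_{j})$ runs over rational open balls meeting $\tilde{K}$; positive measure is automatic by the support condition, with no uniform lower bound needed. To see that every $e$ with $\dim_{\p} \tilde{K}_{e} < \sigma$ lands in some $E_{j} := \{e : \overline{\dim}_{\B}[\tilde{K} \cap \overline{U}_{j}]_{e} < \sigma\}$, one invokes Baire's theorem: since $\tilde{K}_{e}$ is compact and covered by closed sets $F_{i}$ of box dimension $< \sigma$, some $F_{i} \cap \tilde{K}_{e}$ has nonempty relative interior in $\tilde{K}_{e}$, i.e.\ there is an open $V \subset \R$ with $\emptyset \neq \tilde{K}_{e} \cap V \subset F_{i}$. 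Any rational ball $U_{j}$ with $\overline{U}_{j} \subset \rho_{e}^{-1}(V)$ and $U_{j} \cap \tilde{K} \neq \emptyset$ then witnesses $e \in E_{j}$. The use of Baire to localise small box dimension to a relatively open piece --- hence to a countable index --- is the missing ingredient in your approach.
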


\begin{proof} Take a compact set $\tilde{K} \subset B$ such that $\mu(\tilde{K} \cap U) > 0$ for all open sets $U \subset \R^{2}$ which intersect $\tilde{K}$: any compact set $\tilde{K} \subset B \cap \spt \mu$ with positive measure will do, and such sets exists by \cite[Theorem 1.10(1)]{Ma}. Next, let $(U_{j})_{j \in \N}$ be the countable collection of all open balls with rational centers and rational radii that intersect $\tilde{K}$. Write $E := \{e \in S^{1} : \dim_{\p} B_{e} < \sigma\}$, and set 
\begin{displaymath} E_{j} := \{e \in S^{1} : \overline{\dim}_{\B} [\tilde{K} \cap \overline{U}_{j}]_{e} < \sigma\}. \end{displaymath}
Here $[\tilde{K} \cap \overline{U}_{j}]_{e} := \rho_{e}(\tilde{K} \cap \overline{U}_{j})$, as usual. We claim that $E \subset \bigcup_{j} E_{j}$. Let $e \in E$. Then $\dim_{\p} \tilde{K}_{e} < \sigma$, which, by definition, means that
\begin{displaymath} \inf \left\{\sup_{i} \overline{\dim}_{\B} F_{i} : \tilde{K}_{e} \subset \bigcup_{i \in \N} F_{i} \right\} < \sigma,  \end{displaymath} 
where the sets $F_{i}$ can be assumed to be closed. Now, let $(F_{i})_{i \in \N}$ be a countable collection of closed sets such that $\tilde{K}_{e} \subset \bigcup_{i} F_{i}$ and $\overline{\dim}_{\B} F_{i} < \sigma$ for every $i \in \N$. Since $\tilde{K}_{e}$ is compact, Baire's theorem tells us that some intersection $\tilde{K}_{e} \cap F_{i}$ must have interior points in the relative topology of $\tilde{K}_{e}$: in other words, we may find an open set $V \subset \R$ such that $\emptyset \neq \tilde{K}_{e} \cap V \subset F_{i}$. Since the open set $\rho_{e}^{-1}(V) \subset \R^{2}$ intersects $\tilde{K}$, we may deduce that the closure of one of the balls $U_{j}$ lies in $\rho_{e}^{-1}(V)$. Then
\begin{displaymath} \overline{\dim}_{\B} [\tilde{K} \cap \overline{U}_{j}]_{e} \leq \overline{\dim}_{\B} [\tilde{K}_{e} \cap V] \leq \overline{\dim}_{\B} F_{i} < \sigma \end{displaymath}
which means that $e \in E_{j}$. Since $\Dim$ is countably stable, we may now conclude that
\begin{displaymath} \beta < \Dim E \leq \sup_{j} \Dim E_{j}. \end{displaymath}
Thus, one of the sets $E_{j}$ must satisfy $\Dim E_{j} > \beta$. Now $K = \tilde{K} \cap \overline{U}_{j}$, for the same index $j$, is the set we were after.
\end{proof}

Let us see how to prove \eqref{form29} for a compact set $K \subset \R^{2}$, assuming Theorem \ref{boxCategory}. Suppose that \eqref{form29} fails. Then there exist numbers $\sigma < m < \m$ such that
\begin{equation}\label{form40} \underline{\dim}_{\MB} \{e \in S^{1} : \dim_{\p} K_{e} < \sigma\} > 1 + \sigma - m. \end{equation}
Pick a direction $\xi \in S^{1}$ such that $\dim_{\p} K_{\xi} > m$. Then, according to a result of Joyce and Preiss \cite{JP}, we may extract a compact subset $R \subset K_{\xi}$ with $0 < \calP^{m}(R) < \infty$. Note that $\rho_{\xi} \colon K \cap \rho_{\xi}^{-1}(R)\to R$ is a continuous surjection between compact spaces, so we may use \cite[Theorem 1.20]{Ma} to find a measure $\mu$ supported on $K \cap \rho_{\xi}^{-1}(R)$ such that 
\begin{equation}\label{form38} \rho_{\xi\sharp}\mu = \calP^{m}\llcorner R. \end{equation}
We then apply Lemma \ref{packingIsBox} with the choices $\Dim = \underline{\dim}_{\MB}$, $B = K \cap \rho_{\xi}^{-1}(R)$, and the measure $\mu$ we just constructed. Since $\mu(B) > 0$ and \eqref{form40} holds, we may extract a compact set $K^{\mu} \subset B = K \cap \rho_{\xi}^{-1}(R)$ with $\mu(K^{\mu}) > 0$ such that
\begin{equation}\label{form41} \underline{\dim}_{\MB} \{e \in S^{1} : \overline{\dim}_{\B} K^{\mu}_{e} < \sigma\} > 1 + \sigma - m. \end{equation}
Recalling \eqref{form38}, we have
\begin{displaymath} \calP^{m}(K^{\mu}_{\xi}) = \mu[\rho_{\xi}^{-1}(K_{\xi}^{\mu})] \geq \mu(K^{\mu}) > 0, \end{displaymath}
which certainly implies that $\overline{\dim}_{\B} K_{\xi}^{\mu} \geq m$. In particular, we may infer from Theorem \ref{boxCategory} that
\begin{displaymath} \underline{\dim}_{\MB} \{e \in S^{1} : \overline{\dim}_{\B} K^{\mu}_{e} < \sigma\} \leq 1 + \sigma - m. \end{displaymath}
This contradicts \eqref{form41} and completes the second reduction.

\subsection{Proof of Theorem \ref{boxCategory}} We first introduce a discrete notion of 'well-spread $\delta$-separated sets' and prove a version of Marstrand's projection theorem for such sets. Then, we derive Theorem \ref{boxCategory} by finding large well-spread sets inside the given arbitrary set $A$.
\begin{definition}\label{deltaSet} A finite set $C \subset B(0,1)$ is called \emph{a $(\delta,1)$-set}, if the points in $C$ are $\delta$-separated, and
\begin{displaymath} \card [C \cap B(x,r)] \lesssim \frac{r}{\delta}, \qquad x \in \R^{2},\: r \geq \delta. \end{displaymath}
\end{definition}
%\begin{lemma}\label{everySet} Let $\gamma \in (0,2]$, and let $C \subset B(0,1)$ be a finite set, which consists of $\lesssim \delta^{-\gamma}$ $\delta$-separated points. Then $C$ is a $(\delta^{(2 + \gamma)/2},1)$-set.
%\end{lemma}
%\begin{proof} Since the points in $C$ are $\delta$-separated, we have the inequality $\card [C \cap B(x,r)] \lesssim (r/\delta)^{2}$ for $r \geq \delta$. Combining this with the bound $\card [C \cap B(x,r)] \leq \card C$ yields
%\begin{displaymath} \card[C \cap B(x,r)] \lesssim \min\left\{\left(\frac{r}{\delta}\right)^{2},\delta^{-\gamma}\right\} \leq \left[\left(\frac{r}{\delta}\right)^{2} \cdot \delta^{-\gamma} \right]^{1/2} = \frac{r}{\delta^{(2 + \gamma)/2}}. \end{displaymath}
%\end{proof} 
\begin{proposition}\label{discreteMarstrand} Let $C \subset \R^{2}$ be a $(\delta,1)$-set with $n \in \N$ points. Let $\tau > 0$, and let $E \subset S^{1}$ be a $\delta$-separated collection of vectors such that
\begin{displaymath} N(C_{e},\delta) \leq \delta^{\tau}n, \qquad e \in E. \end{displaymath}
Then $\card E \lesssim \delta^{\tau - 1} \cdot \log (1/\delta)$.
\end{proposition}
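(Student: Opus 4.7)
My plan is to run a discrete analogue of the classical energy/incidence proof of Marstrand's projection theorem, using Cauchy--Schwarz on the fibres and then double counting directions versus pairs of points.

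\textbf{Step 1: Lower bound on coincidences, for each direction.} For a fixed $e \in E$, cover $C_{e}$ by $M \leq \delta^{\tau} n$ intervals of length $2\delta$. Since $\sum_{I} \card(C \cap \rho_{e}^{-1}(I)) \geq n$, Cauchy--Schwarz gives
\begin{displaymath}
\#\{(x,y) \in C \times C : |\rho_{e}(x - y)| \leq 2\delta\} \geq \sum_{I} \card(C \cap \rho_{e}^{-1}(I))^{2} \geq \frac{n^{2}}{M} \geq n \cdot \delta^{-\tau}.
\end{displaymath}
Subtracting the diagonal ($n$ pairs) and assuming $\delta$ small enough that $\delta^{-\tau} \geq 2$, the off-diagonal contribution is still $\gtrsim n \cdot \delta^{-\tau}$.

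\textbf{Step 2: Sum over directions and swap.} Summing the bound of Step 1 over $e \in E$ gives
\begin{displaymath}
\sum_{\substack{(x,y) \in C \times C \\ x \neq y}} \card\{e \in E : |\rho_{e}(x - y)| \leq 2\delta\} \gtrsim \card E \cdot n \cdot \delta^{-\tau}.
\end{displaymath}

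\textbf{Step 3: Upper bound per pair.} For $x \neq y$ with $r := |x - y|$, the set $\{e \in S^{1} : |e \cdot (x-y)| \leq 2\delta\}$ lies in two $S^{1}$-arcs of length $\lesssim \delta/r$. Because $E$ is $\delta$-separated, the number of $e \in E$ in this set is $\lesssim 1/r + 1 \lesssim 1/|x-y|$ (using $r \geq \delta$, since the points of the $(\delta,1)$-set are $\delta$-separated, and $r \leq 2$).

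\textbf{Step 4: Sum over pairs via dyadic decomposition.} For each fixed $x \in C$, split $C \setminus \{x\}$ into shells $\{y : 2^{k}\delta \leq |x - y| < 2^{k+1}\delta\}$ for $0 \leq k \lesssim \log(1/\delta)$. The $(\delta,1)$-set property gives $\card\{y : |x-y| < 2^{k+1}\delta\} \lesssim 2^{k+1}$, so
\begin{displaymath}
\sum_{y \in C \setminus \{x\}} \frac{1}{|x-y|} \lesssim \sum_{k = 0}^{\lceil \log_{2}(2/\delta) \rceil} \frac{2^{k+1}}{2^{k}\delta} \lesssim \frac{\log(1/\delta)}{\delta}.
\end{displaymath}
Summing over $x \in C$ yields $\sum_{x \neq y} 1/|x-y| \lesssim n \log(1/\delta)/\delta$.

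\textbf{Step 5: Combine.} Matching the lower bound from Step 2 with the upper bound obtained from Steps 3--4 gives
\begin{displaymath}
\card E \cdot n \cdot \delta^{-\tau} \lesssim \frac{n \log(1/\delta)}{\delta},
\end{displaymath}
which rearranges to $\card E \lesssim \delta^{\tau - 1} \log(1/\delta)$. The only point requiring some care is the bookkeeping with constants and the absorption of the diagonal contribution in Step 1, but both are routine once $\delta$ is sufficiently small relative to $\tau$; no genuine obstacle arises.
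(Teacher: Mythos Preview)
Your proof is correct and follows essentially the same approach as the paper's: both set up an energy $\mathcal{E}$ counting near-collinear pairs summed over directions, bound it below via Cauchy--Schwarz on the fibres (your Step 1, the paper's second estimate), and bound it above by swapping the sums and combining the geometric fact that at most $\lesssim |x-y|^{-1}$ directions $e \in E$ can make $x$ and $y$ fall in the same $\delta$-tube with a dyadic shell decomposition exploiting the $(\delta,1)$-property (your Steps 3--4, the paper's first estimate). The only cosmetic differences are that the paper phrases the relation via membership in a common dyadic $\delta$-tube rather than $|\rho_{e}(x-y)| \leq 2\delta$, and does not separate out the diagonal explicitly.
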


\begin{proof} Given $e \in E$, define the family of sets $\calT_{e}$ as follows:
\begin{displaymath} \calT_{e} := \{\rho_{e}^{-1}[j\delta,(j + 1)\delta) : j \in \Z\}. \end{displaymath}
Thus, $\calT_{e}$ consists of $\delta$-tubes perpendicular to $e$. Define the relation $\sim_{e}$ on $C \times C$ by 
\begin{displaymath} x \sim_{e} y \quad \Longleftrightarrow \quad x,y \in T \in \calT_{e}. \end{displaymath}
Thus, the points $x$ and $y$ are required to lie in a common $\delta$-tube in $\calT_{e}$. Let
\begin{displaymath} \mathcal{E} := \sum_{e \in E} \card \{(x,y) \in C \times C : x \sim_{e} y\}.  \end{displaymath}
If $x,y \in C$, it is a simple geometric fact that there can be no more than $\lesssim |x - y|^{-1}$ directions in $E$ such that $x \sim_{e} y$. This gives the upper bound
\begin{align*} \mathcal{E} & = \sum_{x \in C} \sum_{j : \delta \leq 2^{j} \leq 1} \mathop{\sum_{y \in C}}_{ 2^{j} \leq |x - y| < 2^{j + 1}} \card \{e \in E : x \sim_{e} y\}\\
& \lesssim \sum_{x \in C} \sum_{j : \delta \leq 2^{j} \leq 1} \mathop{\sum_{y \in C}}_{ 2^{j} \leq |x - y| < 2^{j + 1}} |x - y|^{-1}\\
& \lesssim \sum_{x \in C} \sum_{j : \delta \leq 2^{j} \leq 1} \card [C \cap B(x,2^{j + 1})] \cdot 2^{-j}\\
& \lesssim \delta^{-1} \cdot \sum_{x \in C} \sum_{j : \delta \leq 2^{j} \leq 1} 2^{j} \cdot 2^{-j} \asymp \delta^{-1} \cdot n \cdot \log\left(\frac{1}{\delta}\right).
\end{align*}
Next, let us try to find a lower bound for $\mathcal{E}$ in terms of $\card E$. Let $e \in E$. We may and will assume that $\delta^{\tau}n \geq 1$. Since $N(C_{e},\delta) \leq \delta^{\tau}n$, we find that $C$ can be covered by some tubes $T_{1},\ldots,T_{K} \in \calT_{e}$, where $K \lesssim \delta^{\tau}n$. This gives
\begin{align*} \card \{(x,y) \in C \times C : x \sim_{e} y\} & = \sum_{j = 1}^{K} \card \{(x,y) \in C \times C : x,y \in T_{j}\}\\
& = \sum_{j = 1}^{K} \card [C \cap T_{j}]^{2}\\
& \stackrel{\text{C-S}}{\geq} \frac{1}{K} \cdot \left(\sum_{j = 1}^{K} \card [C \cap T_{j}]\right)^{2}\\
& \gtrsim \delta^{-\tau} \cdot n^{-1} \cdot (\card C)^{2} = \delta^{-\tau} \cdot n.
\end{align*}
The letters C-S refer to Cauchy-Schwarz. This immediately yields 
\begin{displaymath} \delta^{-\tau} \cdot n \cdot \card E \lesssim \mathcal{E} \lesssim \delta^{-1} \cdot n \cdot \log \left(\frac{1}{\delta}\right), \end{displaymath}
and the asserted bound follows.
\end{proof}

\begin{proof}[Proof of Theorem \ref{boxCategory}] Recall that $A \subset \R^{2}$ is an arbitrary set, and
\begin{displaymath} \m_{\B} = \sup \{\overline{\dim}_{\B} A_{e} : e \in S^{1}\}. \end{displaymath}
Let $0 \leq \sigma < \m_{\B}$ and write $\tilde{E} := \{e \in S^{1} : \overline{\dim}_{\B} A_{e} < \sigma\}$. We observe that
\begin{displaymath} \tilde{E} \subset \bigcup_{i \in \N} \bigcap_{\delta \in (0,1/i)} \{e \in S^{1} : N(A_{e},\delta) \leq \delta^{-\sigma}\} =: \bigcup_{i \in \N} E_{i}, \end{displaymath}
whence, by definition of $\underline{\dim}_{\MB}$, it suffices to prove that
\begin{equation}\label{form31} \sup_{i} \underline{\dim}_{\B} E_{i} \leq 1 + \sigma - \m_{\B}. \end{equation}
Fix $i \in \N$ and let $E := E_{i}$. Also, fix $\sigma < m < \m_{\B}$, and choose a direction $\xi \in S^{1}$ such that $N(A_{\xi},\delta) \geq \delta^{-m}$ for arbitrarily small values of $\delta > 0$. Choose some such value $\delta$, and use the information $N(A_{\xi},\delta) \geq \delta^{-m}$ to find a $\delta$-separated set $C_{\delta} \subset A_{\xi}$ of cardinality $\card C_{\delta} \gtrsim \delta^{-m}$. Write $\calT_{\xi}$ for the same family of tubes in $\R^{2}$ as in the previous proof. Since $C_{\delta} \subset A_{\xi}$, there exist tubes $T_{1},\ldots,T_{K} \in \calT_{\xi}$ such that 
\begin{itemize}
\item[(a)] the tubes are at least $\delta$-separated from one another,
\item[(b)] $K \gtrsim \delta^{-m}$, and 
\item[(c)] every tube $T_{j}$ contains a point $x_{j} \in A$.
\end{itemize}
The set $C^{\delta} := \{x_{j} : 1 \leq j \leq K\} \subset A$ is clearly $\delta$-separated, and $n := \card C^{\delta} \gtrsim \delta^{-m}$. More importantly, $C^{\delta}$ is a $(\delta,1)$-set. This is a direct consequence of the fact that any ball $B(x,r) \subset \R^{2}$ of radius $r \geq \delta$ intersects no more than $\lesssim r/\delta$ tubes in $\calT_{\xi}$. The previous proposition now implies that
\begin{displaymath} N(\{e : N(C^{\delta}_{e},\delta) \leq \delta^{-\sigma}\},\delta) \leq N(\{e : N(C_{e}^{\delta},\delta) \leq \delta^{m - \sigma}n\},\delta) \leq \delta^{m - \sigma - 1 - \varepsilon} \end{displaymath}
for $\delta > 0$ sufficiently small. Since $C^{\delta} \subset A$, we have
\begin{displaymath} E = \bigcap_{\delta \in (0,1/i)} \{e : N(A_{e},\delta) \leq \delta^{-\sigma}\} \subset \{e : N(C^{\delta}_{e},\delta) \leq \delta^{-\sigma}\}, \end{displaymath}
so we have found arbitrarily small values of $\delta > 0$ such that $N(E,\delta) \leq \delta^{m - \sigma - 1 - \varepsilon}$. This gives \eqref{form31} and completes the proof.
\end{proof}

\begin{remark} We did not include the assertions $\m,\m_{\B} \geq 2\gamma/(2 + \gamma)$, see Remark \ref{FHRemark}, in the statements of Theorems \ref{packingCategory} and \ref{boxCategory}, because they are well-known, and combinatorial-geometric proofs already exist in \cite{FH1}. To see how the bounds would follow from our method, let us sketch the proof of $\m_{\B} \geq 2\gamma/(2 + \gamma)$ for any set $A \subset \R^{2}$ with $\overline{\dim}_{\B} A = \gamma \in (0,2]$. First of all, there exist arbitrarily small scales $\delta > 0$ such that $A$ contains a $\delta$-separated subset $C^{\delta}$ of cardinality between $\delta^{-\gamma + \varepsilon}$ and $\delta^{-\gamma}$. Then, it is easy to check that $C^{\delta}$ is, in fact, a $(\delta^{(2 + \gamma)/2},1)$-set, so Proposition \ref{discreteMarstrand} shows that $N(C^{\delta}_{e},\delta^{(2 + \gamma)/2}) \gtrsim \delta^{-\gamma + 2\varepsilon}$ for all but a very few ($\delta^{(2 + \gamma)/2}$-separated) directions. For all the 'good' directions we have 
\begin{displaymath} \frac{\log N(K_{e},\delta^{(2 + \gamma)/2})}{-\log \delta^{(2 + \gamma)/2}} \gtrsim \frac{\gamma - 2\varepsilon}{(2 + \gamma)/2} \approx \frac{2\gamma}{2 + \gamma}, \end{displaymath}
which means precisely that $\overline{\dim}_{\B} K_{e} \geq 2\gamma/(2 + \gamma)$ in almost every direction. To get the same conclusion for $\dim_{\p}$ instead of $\overline{\dim}_{\B}$, one has to pass through Lemma \ref{packingIsBox} in a similar spirit as we did in the second reduction.
\end{remark}

\section{Proofs of Theorems \ref{estimate} and \ref{estimate2}} 
%Our argument combines ideas from the proofs of \cite[Theorem 4.1]{Ma2} and a 'discrete Marstrand projection theorem', see \cite[Proposition 7]{PS}, due to P. Shmerkin and Y. Peres. We start with a simple reduction:

The proof of Theorem \ref{estimate} is based on a modification of the argument we used in Proposition \ref{discreteMarstrand}. In the proof of Theorem \ref{estimate2}, the same structure is again present, but we also make use of a 'dimension conservation principle' due to H. Furstenberg.

\begin{proof}[Proof of the first estimate in Theorem \ref{estimate}] Frostman's lemma for analytic sets, see \cite{Ca}, and Lemma \ref{packingIsBox} combined reduce our task to proving the following assertion: assume that $\gamma \in (0,1)$, let $K \subset B(0,1)$ be a compact set supporting a Borel probability measure $\mu$ with $I_{\gamma}(\mu) < \infty$, and let $0 < \sigma < \gamma$. Then  the packing dimension of the exceptional set
\begin{displaymath} \tilde{E} := \{e \in S^{1} : \overline{\dim}_{\B} K_{e} < \sigma\} \end{displaymath}
admits the estimate
\begin{displaymath} \dim_{\p} \tilde{E} \leq \frac{\sigma\gamma}{\gamma + \sigma(\gamma - 1)}. \end{displaymath}
As in the previous section, we note that $\tilde{E}$ satisfies
\begin{displaymath} \tilde{E} \subset \bigcup_{i \in \N} \bigcap_{\delta \in (0,1/i)} \{e \in S^{1} : N(K_{e},\delta) \leq \delta^{-\sigma}\} =: \bigcup_{i \in \N} E_{i}. \end{displaymath}
So, it suffices to prove that
\begin{equation}\label{form37} \overline{\dim}_{\B} E_{i} \leq \frac{\sigma\gamma}{\gamma + \sigma(\gamma - 1)} \end{equation}
for every $i \in \N$. Fix $i \in \N$, $0 < \delta < 1/i$, and write $E := E_{i}$. Let us redefine some of the notation from the previous section. There will be tubes: given $e \in S^{1}$, we write
\begin{displaymath} \calT_{e} = \{\rho_{e}^{-1}[j\delta^{\rho},(j + 1)\delta^{\rho} : j \in \Z\}, \end{displaymath}
where $\rho = \rho(\sigma,\gamma) \geq 1$ is a parameter to be chosen later. We define the relation $\sim_{e}$ as before:
 \begin{displaymath} x \sim_{e} y \quad \Longleftrightarrow \quad x,y \in T \in \calT_{e}. \end{displaymath}
 Let $E_{0} \subset E$ be any $\delta$-separated finite subset. This time, the energy $\mathcal{E}$ looks like 
 \begin{displaymath} \mathcal{E} := \sum_{e \in E_{0}} \mu \times \mu(\{(x,y) : x \sim_{e} y\}). \end{displaymath}
 We first aim to bound $\mathcal{E}$ from above. To this end, we make the \emph{a priori} assumption $M := \card E_{0} \lesssim \delta^{-\tau}$ for some $\tau \in (0,1]$. Of course, this is always satisfied with $\tau = 1$. Also, we need the simple geometric fact that the set $\{e \in S^{1} : x \sim_{e} y\}$ is an arc of length $\lesssim \delta^{\rho}/|x - y|$. Thus, there are no more than $\lesssim \max\{1,\delta^{\rho - 1}/|x - y|\}$ values of $e$ in $E_{0}$ such that $x \sim_{e} y$.  Whenever $\delta^{\rho - 1}/|x - y| \geq 1$, this and the inequality $\min\{a,b\} \leq a^{\gamma}b^{1 - \gamma}$ allow us to estimate
\begin{displaymath} \card \{e \in E_{0} : x \sim_{e} y\} \lesssim \min\left\{\frac{\delta^{\rho - 1}}{|x - y|}, M \right\} \leq \frac{\delta^{\gamma(\rho - 1)}}{|x - y|^{\gamma}} \cdot \delta^{-\tau(1 - \gamma)} = \frac{\delta^{\gamma(\rho - 1)- \tau(1 - \gamma)}}{|x - y|^{\gamma}}. \end{displaymath}
Thus,
\begin{align*} \mathcal{E} & \lesssim \iint_{\{|x - y| \geq \delta^{\rho - 1}\}} \, d\mu x \, d\mu y +  \iint_{\{|x - y| \leq \delta^{\rho - 1}\}} \card\{e \in E_{0} : x \sim_{e} y\} \, d\mu x\, d\mu y\\
& \lesssim 1 + \delta^{\gamma(\rho - 1)- \tau(1 - \gamma)} \iint |x - y|^{-\gamma} \, d\mu x \, d\mu y \asymp \max\{1,\delta^{\gamma(\rho - 1)- \tau(1 - \gamma)}\}. \end{align*}
Next, we estimate $\mathcal{E}$ from below in terms of $M$. If $e \in E_{0}$, we have
\begin{displaymath} N(K_{e},\delta^{\rho}) \leq \delta^{-\rho\sigma}, \end{displaymath}
since $\delta^{\rho} \leq \delta < 1/i$. This means that $K$ -- and $\spt \mu$ in particular -- can be covered with some tubes $T_{1},\ldots,T_{K} \in \calT_{e}$ with $K \lesssim \delta^{-\rho\sigma}$. An application of the Cauchy-Schwarz inequality, similar to the one seen in the proof of Theorem \ref{discreteMarstrand}, gives
\begin{align*} \mu \times \mu(\{(x,y) : x \sim_{e} y\}) & = \sum_{j = 1}^{K} \mu \times \mu(\{(x,y) : x,y \in T_{j}\})\\
& = \sum_{j = 1}^{K} \mu(T_{j})^{2} \stackrel{\text{C-S}}{\geq} \frac{1}{K} \left(\sum_{j = 1}^{K} \mu(T_{j}) \right)^{2} \gtrsim \delta^{\rho\sigma}.
\end{align*}
This shows that $\mathcal{E} \gtrsim M \cdot \delta^{\rho\sigma}$, and so
\begin{equation}\label{form34} M \lesssim \delta^{-\rho\sigma} \cdot \max\{1,\delta^{\gamma(\rho - 1) - \tau(1 - \gamma)}\}. \end{equation}
The proof is finished by iterating this estimate. Here is the idea. \textbf{If} 
\begin{equation}\label{form36} \gamma(\rho - 1) - \tau(1 - \gamma) \leq 0, \end{equation} the second term dominates inside the maximum in \eqref{form34}, and we obtain the bound $M \lesssim \delta^{-\rho\sigma + \gamma(\rho - 1) - \tau(1 - \gamma)}$. We may then replace the \emph{a priori} estimate $M \lesssim \delta^{-\tau}$ by $M \lesssim \delta^{-\rho\sigma + \gamma(\rho - 1) - \tau(1 - \gamma)}$ and start the proof over (of course, here we need to know that some \emph{a priori} estimate is \textbf{true} to begin with, but, as noted, we always have $M \lesssim \delta^{-\tau}$ with $\tau = 1$, for example). Continuing in this manner (and assuming that \eqref{form36} always holds), we get a sequence of estimates, where the 'new' exponent of $\delta$ is obtained by multiplying the previous one by $(1 - \gamma) < 1$ and adding $-\rho\sigma + \gamma(\rho - 1)$. After $n \geq 1$ iterations, the result will look like
\begin{displaymath} -\tau_{n} := [-\rho\sigma + \gamma(\rho - 1)]\sum_{k = 0}^{n - 1} (1 - \gamma)^{k} - (1 - \gamma)^{n}\tau. \end{displaymath}
Since $-\tau_{n} \to -\rho\sigma/\gamma + (\rho - 1)$, we see that $M \lesssim \delta^{-\rho\sigma/\gamma + (\rho - 1)}$, and this gives 
\begin{equation}\label{form35} \overline{\dim}_{\B} E \leq \frac{\rho\sigma}{\gamma} - (\rho - 1). \end{equation}
It is immediate from \eqref{form35} that large choices of $\rho$ give better estimates for $\overline{\dim}_{\B} E$. So, how large can we take $\rho$ to be? For the validity of the previous argument, it was crucial that \eqref{form36} was true in every one of the infinite number of iterations: in other words, it seems like we should choose $\rho$ so that \eqref{form36} holds with $\tau$ replaced by $\tau_{n}$, for all $n \in \N$. Fortunately, there is an easier way. Let
\begin{displaymath} \rho := \frac{\gamma}{\gamma + \sigma(\gamma - 1)} \geq 1. \end{displaymath}
Then, there are two alternatives. If \eqref{form36} fails at some iteration (that is, for some $\tau_{n}$) we may read from \eqref{form34} that $M \lesssim \delta^{-\rho\sigma}$. This immediately yields the estimate \eqref{form37}. But if \eqref{form36} holds for every $\tau_{n}$, $n \in \N$, we have \eqref{form35} at our disposal: and with this particular choice of $\rho$, one readily checks that we end up with \eqref{form37} again.
\end{proof}

\begin{proof}[Proof of the second estimate in Theorem \ref{estimate}] The proof begins in a manner similar to the previous one. It suffices to show the following assertion: assume that $\gamma \in (0,1)$, let $K \subset B(0,1)$ be a compact set supporting a Borel probability measure $\mu$ satisfying $\mu(B(x,r)) \lesssim r^{\gamma}$ and $I_{\gamma}(\mu) < \infty$, let $\gamma/2 \leq \sigma < \gamma$, and let $i \in \N$. Then the upper box-dimension of the exceptional set
\begin{displaymath} E := \bigcap_{\delta \in (0,1/i)} \{e \in S^{1} : N(K_{e},\delta) \leq \delta^{-\sigma}\} \end{displaymath}
admits the estimate
\begin{equation}\label{box} \overline{\dim}_{\B} E \leq  \frac{(2\sigma - \gamma)(1 - \gamma)}{\gamma/2} + \sigma. \end{equation}
If $\card E \leq 2$, we are done. Otherwise, choose three distinct vectors $\xi_{1},\xi_{2},\xi_{3} \in E$. We record the following useful property: there exists a constant $\alpha > 0$ such that any vector $e \in S^{1}$ is at distance $\alpha$ from \textbf{at least two} of the vectors $\xi_{1},\xi_{2},\xi_{3}$.

Fix $\delta < 1/i$. Let us recall and redefine some notation from the previous proofs. Given $e \in S^{1}$, we write
\begin{displaymath} \calT_{e} := \{\rho_{e}^{-1}[j\delta,(j + 1)\delta) : j \in \Z\}. \end{displaymath}
Thus, $\calT_{e}$ consists of disjoint half-open $\delta$-tubes, perpendicular to the vector $e$. If $x,y \in \R^{2}$, we define the relation $x \sim_{e} y$, as before, by 
\begin{displaymath} x \sim_{e} y \quad \Longleftrightarrow \quad x,y \in T \in \calT_{e}. \end{displaymath}
Thus, the points $x$ and $y$ have to be contained in the \textbf{same} tube in $\calT_{e}$. Now we define a version of the $\mathcal{E}$-energy. Let $E_{0} \subset E$ be any $\delta$-separated set, and define
\begin{displaymath} \mathcal{E} := \sum_{e \in E_{0}} \iint_{\{(x,y) : x \sim_{e} y\}} |x - y|^{1 - \gamma} \, d\mu x \, d\mu y. \end{displaymath}
Let us first bound $\mathcal{E}$ from above. Again, we make use of the fact that the set $\{e \in S^{1} : x \sim_{e} y\}$ is an arc $J_{x,y}$ of length $\ell(J_{x,y}) \lesssim \delta/|x - y|$. In particular, given a pair of points $x,y \in \R^{2}$, at most $\lesssim |x - y|^{-1}$ vectors $e \in E_{0}$ can satisfy $x \sim_{e} y$. This observation yields
\begin{displaymath} \mathcal{E} = \iint \card \{e \in E_{0} : x \sim_{e} y\} |x - y|^{1 - \gamma} \, d\mu x \, d\mu y \lesssim  \iint |x - y|^{-\gamma} \, d\mu x \, d\mu y \asymp 1. \end{displaymath}

Next, we will bound $\mathcal{E}$ from below in terms of $\card E_{0}$. Fix any vector $e \in E_{0}$. Then $N(K_{e},\delta) \leq \delta^{-\sigma}$, which means that $\spt \mu \subset K$ is covered by some tubes $T_{1},\ldots,T_{k} \in \calT_{e}$ with $k \lesssim \delta^{-\sigma}$. Fix $\tau > 0$, and, for each tube $T_{j}$, choose a $\delta \times \delta^{\tau}$-rectangle $S_{j} \subset T_{j}$, see Figure \ref{fig7}, with the following property. The set $T_{j} \setminus S_{j}$ has two $\delta^{\tau}$-separated components, say $T_{j}^{-}$ and $T_{j}^{+}$. We choose the position of the rectangle $S_{j}$ so that either
\begin{equation}\label{form16} \mu(T_{j} \setminus S_{j}) \leq c\delta^{\sigma} \quad \text{ or } \quad \mu(T_{j}^{-}) \asymp \mu(T_{j}^{+}),  \end{equation}
where $c > 0$ is a constant so small that $k \cdot c\delta^{\sigma} \leq 1/4$. This means that \textbf{if} we can choose the rectangle $S_{j}$ so that the first option in \eqref{form16} holds, then we do just that. But if no such choice of $S_{j}$ is possible, then, for any choice of $S_{j}$, the opposite must hold: $\mu(T_{j}^{-}) + \mu(T_{j}^{+}) = \mu(T_{j} \setminus S_{j}) > c\delta^{\sigma}$. Now, if we move $S_{j}$ by an amount of $\delta$ up or down the tube $T_{j}$, the $\mu$-measures of the half-tubes $T_{j}^{-}$ and $T_{j}^{+}$ can change by no more than $\lesssim \delta^{\gamma}$, which is much smaller than $c\delta^{\sigma}$ for small values of $\delta$. This ensures that the second option in \eqref{form16} can be attained for a suitable choice of the position of $S_{j}$ (at least if $\delta$ is small enough, which we can always assume).
\begin{figure}[h!]
\begin{center}
\includegraphics[scale = 0.8]{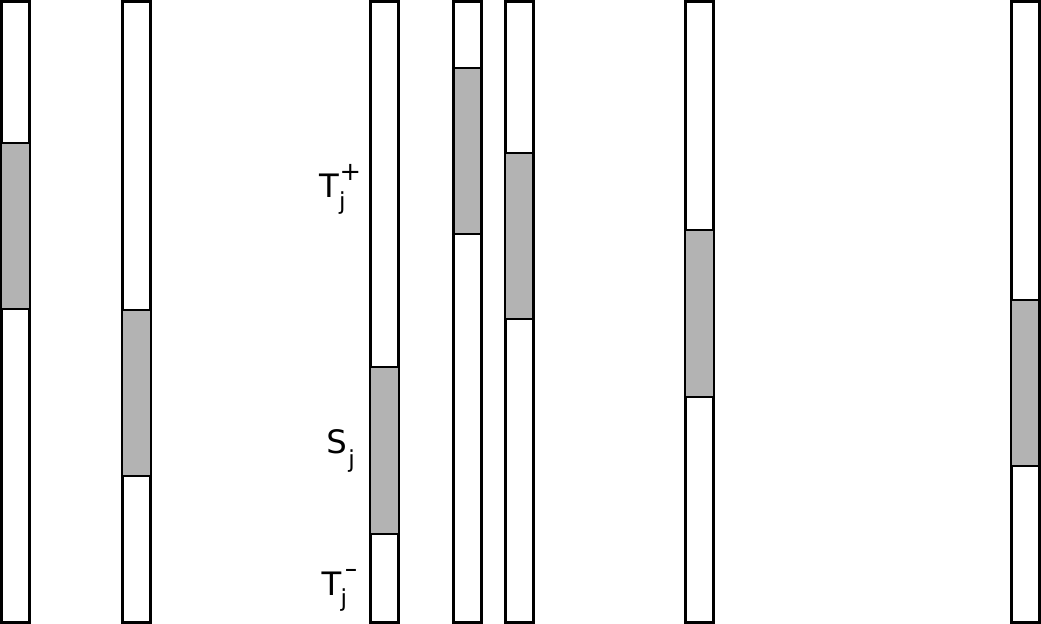}
\caption{The tubes $T_{j}$ and the rectangles $S_{j}$.}\label{fig7}
\end{center}
\end{figure}

Next, we claim that
\begin{equation}\label{form17} \sum_{j = 1}^{k} \mu(T_{j} \setminus S_{j}) \geq \frac{1}{2}, \end{equation} 
for \textbf{large} enough $\tau > 0$ (equivalently, for small enough $\delta^{\tau}$). To prove this, assume that \eqref{form17} fails. Since the total $\mu$-mass of the tubes $T_{j}$ is one, this implies that
\begin{equation}\label{form19} \sum_{j = 1}^{k} \mu(S_{j}) \geq \frac{1}{2}. \end{equation}
%In other words, a large portion of the support of $\mu$ can be covered by $\lesssim \delta^{-\sigma}$ rectangles with dimensions $\delta \times \delta^{\tau}$. This yields
%\begin{displaymath} \frac{1}{2} \leq \sum_{j = 1}^{K} \mu(S_{j}) \lesssim \delta^{-\sigma} \cdot \delta^{\gamma\tau}, \end{displaymath}
%and so $\delta^{\tau} > a\delta^{\sigma/\gamma}$ for some constant $a > 0$, independent of $\delta$. Thus, if 
%\begin{equation}\label{form18} \delta^{\tau} = a\delta^{\sigma/\gamma}, \end{equation}
%we see that \eqref{form17} must hold. This is the reason for the estimate $\overline{\dim}_{\B} E \leq \sigma/\gamma$, as will be clear in a moment. But first, let us find another value $\tau$, which ensures that \eqref{form17} holds. Once more, suppose that \eqref{form17} fails, and \eqref{form19} holds. 
We will now use \eqref{form19} to extract a lower bound for $\delta^{\tau}$. We may and will further assume that every rectangle $S_{j}$ has $\mu$-measure at least $c\delta^{\sigma}$: if this is not true to begin with, simply discard all the rectangles with $\mu(S_{j}) < c\delta^{\sigma}$ to obtain a subcollection of some \emph{remaining} rectangles $S_{j}$, which satisfy $\mu(S_{j}) \geq c\delta^{\sigma}$. Then \eqref{form19} holds with $1/2$ replaced by $1/4$, since the total $\mu$-measure of the discarded rectangles $S_{j}$ is bounded by $k \cdot c\delta^{\sigma} \leq 1/4$. We keep the same notation for these remaining rectangles.

It is time to recall the vectors $\xi_{1},\xi_{2},\xi_{3} \in E$ that were chosen at the beginning of the proof. As we remarked upon choosing these $\xi_{j}$, we may find two among the three vectors, say $\xi_{1}$ and $\xi_{2}$, such that $|e - \xi_{1}| \geq \alpha$ and $|e - \xi_{2}| \geq \alpha$. We use this information as follows:
\begin{claim}\label{claim5} Let $P \subset \R^{2}$ be any set, which is contained in a single $\delta$-tube $T \in \calT_{e}$. Then
\begin{displaymath} N(P_{\xi_{j}},\delta) \gtrsim N(P,\delta), \qquad j \in \{1,2\}, \end{displaymath}
where the implicit constants depend only on $\alpha$.
\end{claim}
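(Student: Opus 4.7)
The plan is a one-dimensional reduction. Since $P$ lies in a single $\delta$-tube perpendicular to $e$, its geometry is essentially one-dimensional along $e^{\perp}$, and $\rho_{\xi_j}$ restricted to $T$ is a controlled linear image of this structure, provided $\xi_j$ is transverse to $e$. I would first set up coordinates so that $e = (1, 0)$ and $e^{\perp} = (0, 1)$, making $T$ lie inside a vertical strip $[s_0, s_0 + \delta] \times \R$. Writing $\xi_j = (\cos \theta_j, \sin \theta_j)$, the projection reads $\rho_{\xi_j}(s, t) = s\cos\theta_j + t\sin\theta_j$, and for any two points $(s_1, t_1), (s_2, t_2) \in T$ one has $|s_1 - s_2| \leq \delta$, giving the key comparison
\begin{displaymath} |\rho_{\xi_j}(s_1, t_1) - \rho_{\xi_j}(s_2, t_2)| \geq |\sin\theta_j|\,|t_1 - t_2| - \delta. \end{displaymath}
Observe also that the restriction of $\rho_{e^{\perp}}$ to $T$ is $1$-Lipschitz with fibers of diameter $\leq \delta$, whence $N(\rho_{e^{\perp}}(P), \delta) \asymp N(P, \delta)$.

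The transversality hypothesis $|e - \xi_j| \geq \alpha$ should yield $|\sin\theta_j| \geq c(\alpha) > 0$. Starting with a maximal $\delta$-separated subset of $\rho_{e^{\perp}}(P)$, of cardinality $M \asymp N(P, \delta)$, I would subsample it by keeping every $K$-th element with $K := \lceil 2/|\sin \theta_j|\rceil$. The surviving $t$-coordinates are $K\delta$-separated, so by the displayed inequality their $\rho_{\xi_j}$-images are pairwise separated by at least $|\sin\theta_j| K \delta - \delta \geq \delta$. This produces $\gtrsim_\alpha N(P, \delta)$ many $\delta$-separated points in $P_{\xi_j}$, proving the claim.

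The main (minor) point to verify is the quantitative transversality $|\sin\theta_j| \gtrsim \alpha$. The identity $|e - \xi_j|^2 = 2 - 2\cos\theta_j$ immediately gives $|\sin(\theta_j/2)| \geq \alpha/2$, which excludes $\xi_j$ being near $e$; the complementary control on $\cos(\theta_j/2)$, needed to rule out $\xi_j$ near $-e$, is ensured by the way the three vectors $\xi_1, \xi_2, \xi_3$ were selected, so that every $e \in S^1$ is projectively well-separated from at least two of them. Once this is in hand, all implicit constants in the estimates above depend only on $\alpha$, as claimed.
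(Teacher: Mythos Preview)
Your argument is correct and is essentially the same as the paper's, just written out in coordinates: the paper observes that if $x,y \in P$ with $|x-y| \geq C\delta$, then the segment $\overline{xy}$ is nearly parallel to $e^{\perp}$, hence makes a definite angle with $\xi_{j}^{\perp}$, giving $|\rho_{\xi_{j}}(x) - \rho_{\xi_{j}}(y)| \gtrsim_{\alpha} \delta$. Your displayed inequality $|\rho_{\xi_{j}}(s_{1},t_{1}) - \rho_{\xi_{j}}(s_{2},t_{2})| \geq |\sin\theta_{j}|\,|t_{1}-t_{2}| - \delta$ is exactly the quantitative form of this, and your subsampling step makes explicit what the paper leaves implicit.

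One remark: you correctly notice that the hypothesis $|e - \xi_{j}| \geq \alpha$ alone does not exclude $\xi_{j}$ near $-e$, where $|\sin\theta_{j}|$ would be small and the claim would genuinely fail. The paper's brief proof glosses over this point as well. Your resolution (reading the separation projectively, which is what is actually needed and what the choice of three distinct directions on $S^{1}/\{\pm 1\}$ naturally provides) is the right fix; it is worth stating this explicitly rather than deferring to ``the way the three vectors were selected'', since the paper's text only asserts Euclidean separation.
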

\begin{proof} If $x,y \in P$ and $|x - y| \geq C\delta$, then the line segment $l$ connecting $x$ and $y$ is almost perpendicular to $e$. In particular, for large enough $C > 0$, we have that $l$ cannot be perpendicular to $\xi_{j}$, and this gives $|\rho_{\xi_{j}}(x) - \rho_{\xi_{j}}(y)| \gtrsim \delta$. 
\end{proof}
We apply the claim with $P_{j} := \spt \mu \cap S_{j}$, for each of the remaining rectangles $S_{j}$. Note that since $\mu(S_{j}) \geq c\delta^{\sigma}$, and $\mu$ satisfies the power bound $\mu(B(x,\delta)) \lesssim \delta^{\gamma}$, we have $N(P_{j},\delta) \gtrsim \delta^{\sigma - \gamma}$. Similarly, it follows from the condition $\sum \mu(S_{j}) \geq 1/4$ that
\begin{equation}\label{form21} N\left(\bigcup_{j = 1}^{k} P_{j},\delta^{\tau}\right) \gtrsim \delta^{-\gamma\tau}. \end{equation}
Since the vectors $\xi_{1}$ and $\xi_{2}$ are $\alpha$-separated (which means that they are essentially orthogonal), we may deduce that either
\begin{equation}\label{form20} N\left(\bigcup_{j = 1}^{k} \rho_{\xi_{1}}(P_{j}),\delta^{\tau} \right) \gtrsim \delta^{-\gamma\tau/2} \quad \text{or} \quad N\left(\bigcup_{j = 1}^{k} \rho_{\xi_{2}}(P_{j}),\delta^{\tau} \right) \gtrsim \delta^{-\gamma\tau/2}, \end{equation}
where the implicit constants depend only on $\alpha$ and the implicit constants in \eqref{form21}. Namely, if both inequalities failed, we could easily cover $\bigcup P_{j}$ with $\ll \delta^{-\gamma\tau}$ balls of radius $\delta^{\tau}$, contradicting \eqref{form21}. Suppose, for example, the the first inequality in \eqref{form20} holds. Then we may choose a $5\delta^{\tau}$-separated subset
\begin{displaymath} R \subset \bigcup_{j = 1}^{k} \rho_{\xi_{1}}(P_{j}) \end{displaymath}
of cardinality $\card R \gtrsim \delta^{-\gamma\tau/2}$. For each point $t \in R$, we may find an index $j(t) \in \{1,\ldots,k\}$ such that $t \in \rho_{\xi_{1}}(P_{j(t)})$. But since $P_{j(t)} \subset S_{j(t)}$, we see that $\rho_{\xi_{1}}(P_{j(t)}) \subset [t - 2\delta^{\tau},t + 2\delta^{\tau}]$. This means that the projections $\rho_{\xi_{1}}(P_{j(t)})$ are $\delta^{\tau}$-separated for distinct $t \in R$. Now, it remains to use Claim \ref{claim5} to deduce the lower bound  
\begin{displaymath} N(\rho_{\xi_{1}}(P_{j}),\delta) \gtrsim N(P_{j},\delta) \gtrsim \delta^{\sigma - \gamma} \end{displaymath}
for every $j \in \{1,\ldots,k\}$, and, in particular, for every $j = j(t)$. It follows that 
\begin{displaymath} N\left(\bigcup_{j = 1}^{k} \rho_{\xi_{1}}(P_{j}),\delta\right) \geq \sum_{t \in R} N(\rho_{\xi_{1}}(P_{j(t)}),\delta) \gtrsim \delta^{-\gamma\tau/2} \cdot \delta^{\sigma - \gamma}. \end{displaymath}
On the other hand, we have $\xi_{1} \in E$, which means that
\begin{displaymath} N\left(\bigcup_{j = 1}^{k} \rho_{\xi_{1}}(P_{j}),\delta\right) \leq N(K_{\xi_{1}},\delta) \leq \delta^{-\sigma}. \end{displaymath}
Comparing the estimates leads to the existence of a constant $b > 0$, independent of $\delta$, such that $\delta^{\tau} > b\delta^{(2\sigma - \gamma)/(\gamma/2)}$. All this was deduced solely on the basis of \eqref{form17} failing. Thus, if
\begin{equation}\label{form22} \delta^{\tau} = b\delta^{(2\sigma - \gamma)/(\gamma/2)}, \end{equation}
we see that \eqref{form17} must hold.

Now we are prepared to estimate $\mathcal{E}$ from below. Choose $\tau > 0$ in such a manner that \eqref{form17} holds. As we just demonstrated, the choice giving $\delta^{\tau} = b\delta^{(2\sigma - \gamma)/(\gamma/2)}$ is ok. Since \eqref{form17} holds, we may discard the indices $j \in \{1,\ldots,k\}$ such that the first possibility in \eqref{form16} is realized: for the remaining indices $j$, say $j \in \{1,\ldots,K\}$, $K \leq k \lesssim \delta^{-\sigma}$, the latter option in \eqref{form16} holds, and, moreover, we still have
\begin{equation}\label{form23} \sum_{j = 1}^{K} \mu(T_{j}\setminus S_{j}) \geq \frac{1}{4} \end{equation}
by the choice of $c$. Here is the reason why we are so interested in removing a (large) rectangle $S_{j}$ from $T_{j}$: if $x \in T_{j}^{-}$ and $y \in T_{j}^{+}$, we have $|x - y| \geq \delta^{\tau}$. This means that we can make the following estimate:
\begin{align*} \iint_{\{(x,y) : x\sim_{e} y\}} |x - y|^{1 - \gamma} \, d\mu x \, d\mu y & \geq \sum_{j = 1}^{K} \int_{x \in T_{j}^{-}} \int_{y \in T_{j}^{+}} |x - y|^{1 - \gamma} \, d\mu x \, d\mu y\\
& \geq \delta^{\tau(1 - \gamma)} \cdot \sum_{j = 1}^{K} \mu(T_{j}^{-}) \cdot \mu(T_{j}^{+})\\
& \stackrel{\eqref{form16}}{\asymp} \delta^{\tau(1 - \gamma)} \cdot \sum_{j = 1}^{K} \mu(T_{j}^{+})^{2}\\
& \stackrel{\text{C-S}}{\geq} \delta^{\tau(1 - \gamma)} \cdot \frac{1}{K} \left(\sum_{j = 1}^{K} \mu(T_{j}^{+}) \right)^{2}\\
&  \stackrel{\eqref{form16}}{\gtrsim} \delta^{\tau(1 - \gamma) + \sigma} \left(\sum_{j = 1}^{K} \mu(T_{j} \setminus S_{j}) \right)^{2} \stackrel{\eqref{form23}}{\gtrsim} \delta^{\tau(1 - \gamma) + \sigma}. \end{align*}
The letters C-S refer to Cauchy-Schwarz. This estimate holds uniformly for every vector $e \in E_{0}$, so we have
\begin{displaymath} \delta^{\tau(1 - \gamma) + \sigma} \cdot \card E_{0} \lesssim \mathcal{E} \lesssim 1. \end{displaymath}
This yields
\begin{displaymath} N(E,\delta) \lesssim \delta^{-\sigma - \tau(1 - \gamma)}\end{displaymath} 
\textbf{for any such $\tau > 0$ such that \eqref{form17} holds}. The choice of $\tau$ indicated by \eqref{form22} immediately yields the bound \eqref{box}.
\end{proof}

Next, we use a similar method to prove Theorem \ref{estimate2}. The idea is this: the last few lines of the previous proof reveal that if we could always choose $\tau$ arbitrarily close to zero, we would immediately obtain $\overline{\dim}_{\B} E \leq \sigma$. The problem with general sets is that such a choice might result in the failure of the crucial estimate \eqref{form17}: this would essentially mean that, simultaneously, the dimension of the projection in some direction $e \in E$ drops to $\sigma < \gamma$ \textbf{and} most of the measure $\mu$ is concentrated in the $\delta^{\tau}$-neighbourhood of a graph 'above' the line spanned by the vector $e$. For self-similar sets and measures (under some additional conditions, at least), such behavior is simply not possible for $\tau > 0$. The reason for this is the following \emph{dimension conservation principle} introduced by H. Furstenberg.

\begin{definition}[Dimension conservation principle] Let $K \subset \R^{2}$. A projection $\rho_{e} \colon \R^{2} \to \R$ is \emph{dimension conserving}, if there exists $\Delta = \Delta(e) \geq 0$ such that
\begin{displaymath} \Delta + \dim \{t \in \R : \dim [K \cap \rho_{e}^{-1}\{t\}] \geq \Delta\} \geq \dim K. \end{displaymath}
In this definition, the convention is adopted that $\dim \emptyset = -\infty$: this means, among other things, that $\Delta = \dim K$ is an admissible choice for $\Delta$ only in case there exist some lines $\rho_{e}^{-1}\{t\}$ such that $\dim [K \cap \rho_{e}^{-1}\{t\}] = \dim K$. Also, if $\rho_{e}^{-1}\{t\} \cap K = \emptyset$, we have $\dim [K \cap \rho_{e}^{-1}\{t\}] = -\infty$, which means that 
\begin{equation}\label{form24} \{t : \dim [K \cap \rho_{e}^{-1}\{t\}] \geq \delta\} \subset K_{e}. \end{equation}
\end{definition}

\begin{remark} There is no reason why $\Delta(e)$ should be unique, so, in fact, the notation $\Delta(e)$ refers to a set. Whenever we write $\Delta(e) \geq C$, we mean that 
\begin{displaymath} \sup \Delta(e) \geq C. \end{displaymath}
\end{remark}
The requirement $\inf \Delta(e) \geq C$ might seem more natural, but this definition makes Proposition \ref{prop2} slightly stronger. In \cite[Theorem 6.2]{Fu} Furstenberg proves that if $K \subset \R^{2}$ is a \emph{compact homogeneous set}, then every projection $\rho_{e}$, $e \in S^{1}$, is dimension conserving. For the precise definition of homogeneous sets, we refer to \cite[Definition 1.4]{Fu}, but for Theorem \ref{estimate2} in mind, it suffices to know two facts: (i) all self-similar sets in the plane containing no rotations and satisfying the strong separation condition are homogeneous, and (ii) all compact homogeneous sets $K$ have $\dim K = \overline{\dim}_{\B} K$. Both facts are stated immediately after \cite[Definition 1.7]{Fu}. We will use Furstenberg's result via the following easy proposition:
\begin{proposition}\label{prop1} Let $K \subset \R^{2}$ be a compact homogeneous set. Then
\begin{displaymath} \{e \in S^{1} : \dim K_{e} \leq \sigma\} \subset \{e \in S^{1} : \Delta(e) \geq \dim K - \sigma\}. \end{displaymath}
\end{proposition}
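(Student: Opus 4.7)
The plan is a short direct deduction from Furstenberg's dimension conservation theorem combined with the containment \eqref{form24}. There is no real obstacle here; the main work has already been done by quoting \cite[Theorem 6.2]{Fu}, and the proposition is essentially a mechanical unraveling of the definitions.

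First I would fix a direction $e \in S^{1}$ for which $\dim K_{e} \leq \sigma$, and appeal to Furstenberg's theorem, which guarantees that $\rho_{e}$ is dimension conserving since $K$ is compact homogeneous. This produces some admissible value $\Delta \geq 0$ such that
\begin{displaymath} \Delta + \dim \{t \in \R : \dim [K \cap \rho_{e}^{-1}\{t\}] \geq \Delta\} \geq \dim K. \end{displaymath}

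Next I would invoke the containment \eqref{form24}, which says that the fiber-dimension level set appearing above is a subset of $K_{e}$. Monotonicity of Hausdorff dimension then bounds the dimension of this set by $\dim K_{e} \leq \sigma$, and substituting into the inequality above yields $\Delta + \sigma \geq \dim K$, i.e.\ $\Delta \geq \dim K - \sigma$. Since $\Delta$ is an admissible value in $\Delta(e)$, this gives $\sup \Delta(e) \geq \dim K - \sigma$, which is exactly the convention under which $\Delta(e) \geq \dim K - \sigma$ is to be interpreted. Hence $e$ lies in the set on the right-hand side of the claimed inclusion, completing the proof.

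The only subtle point worth mentioning is the convention $\dim \emptyset = -\infty$: if $\dim K - \sigma \leq 0$ the statement is trivial, and if $\dim K - \sigma > 0$ then the set $\{t : \dim [K \cap \rho_{e}^{-1}\{t\}] \geq \Delta\}$ is automatically a subset of $K_{e}$ via \eqref{form24}, so no degenerate case arises.
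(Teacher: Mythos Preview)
Your proof is correct and follows essentially the same approach as the paper: fix $e$ with $\dim K_{e} \leq \sigma$, invoke Furstenberg's dimension conservation to obtain an admissible $\Delta$, and combine the defining inequality with the containment \eqref{form24} to force $\Delta \geq \dim K - \sigma$. The paper phrases this as a contradiction argument and in fact observes that the same reasoning gives the slightly stronger conclusion $\inf \Delta(e) \geq \dim K - \sigma$, but your direct version already suffices under the stated convention $\Delta(e) \geq C \Leftrightarrow \sup \Delta(e) \geq C$.
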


\begin{proof} According to Furstenberg's result, we know that every projection $\rho_{e}$ is dimension conserving, so that $\Delta(e)$ is well-defined. Suppose that $\dim K_{e} \leq \sigma$. If, in the set $\Delta(e)$, there was even one value $\Delta$ with $\Delta < \dim K - \sigma$, we would immediately obtain
\begin{displaymath} \dim K \leq \Delta + \dim \{t : \dim [K \cap \rho_{e}^{-1}\{t\}] \geq \Delta\} \stackrel{\eqref{form24}}{\leq} \Delta + \dim K_{e} <  \dim K, \end{displaymath}
which is absurd. Hence, $\dim K_{e} \leq \sigma$ even implies $\inf \Delta(e) \geq \dim K - \sigma$.
\end{proof}

Thus, for compact homogeneous sets, we may estimate the packing dimension of the exceptional set $\{e \in S^{1} : \Delta(e) \geq \dim K - \sigma\}$ instead of $\{e \in S^{1} : \dim K_{e} \leq \sigma\}$. Such an estimate is the content of the following proposition.

\begin{proposition}\label{prop2} Let $K \subset \R^{2}$ be a compact set with $\dim K = \overline{\dim}_{\B} K = \gamma$, and let $0 \leq \sigma < \gamma$. Then $\dim_{\p} E \leq \sigma$, where
\begin{displaymath} E = \{e \in S^{1} : \rho_{e} \text{ is dimension conserving, and } \Delta(e) \geq \gamma - \sigma\}. \end{displaymath}
\end{proposition}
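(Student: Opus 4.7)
I plan to follow the second proof of Theorem~\ref{estimate} almost verbatim, replacing the single Frostman measure on $K$ by a direction-dependent disintegrated measure $\mu_{e}$ built from the Furstenberg dimension conservation principle. For each $e \in E$, pick $\Delta_{e} \geq \gamma - \sigma$ witnessing dimension conservation and set
\begin{displaymath}
F_{e} := \{t \in \R : \dim[K \cap \rho_{e}^{-1}\{t\}] \geq \Delta_{e}\},
\end{displaymath}
so that $\dim F_{e} \geq \gamma - \Delta_{e}$; since $\Delta_{e} \geq \gamma - \sigma$, we also have $\dim F_{e} \leq \sigma$. Applying Frostman's lemma to $F_{e}$ (with error $\epsilon > 0$) and to each fiber $K \cap \rho_{e}^{-1}\{t\}$, $t \in F_{e}$, and then disintegrating, assemble a probability measure
\begin{displaymath}
\mu_{e} := \int \mu_{t}^{e} \, d\nu_{e}(t)
\end{displaymath}
on $K$ with $\mu_{e}(B(x,r)) \lesssim r^{\gamma - \epsilon}$ and $(\rho_{e})_{\sharp}\mu_{e} = \nu_{e}$; here $\nu_{e}$ has Frostman exponent $\gamma - \Delta_{e} - \epsilon$ on $F_{e}$, while each $\mu_{t}^{e}$ has Frostman exponent $\Delta_{e} - \epsilon \geq \gamma - \sigma - \epsilon$ on its fiber.

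To pass from $\dim_{\p}$ to $\overline{\dim}_{\B}$, decompose $E$ into the countable family
\begin{displaymath}
E_{i} := \{e \in E : N(F_{e}, \delta) \leq \delta^{-\sigma - \epsilon} \text{ for every } \delta \leq 1/i\},
\end{displaymath}
so it suffices to show $\overline{\dim}_{\B} E_{i} \leq \sigma + O(\epsilon)$ and let $\epsilon \to 0$. Fix $i$, $\delta < 1/i$, and a $\delta$-separated $E_{\delta} \subset E_{i}$ of cardinality $M$; reintroduce the tubes $\calT_{e}$ and the $\delta \times \delta^{\tau}$ rectangles $S_{j} \subset T_{j}$ from the second proof of Theorem~\ref{estimate}, with a parameter $\tau > 0$ to be taken small, and set
\begin{displaymath}
\mathcal{E} := \sum_{e \in E_{\delta}} \iint_{\{x \sim_{e} y\}} |x - y|^{1 - \gamma} \, d\mu_{e} \, d\mu_{e}.
\end{displaymath}
The tube Cauchy--Schwarz from the original proof yields the lower bound $\mathcal{E} \gtrsim M \delta^{\sigma + \tau(1-\gamma)}$ as soon as the strip condition $\sum_{j} \mu_{e}(T_{j} \setminus S_{j}) \geq 1/2$ holds for each $e$. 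The crucial new observation, replacing the three-direction trick of the original proof, is that the Frostman bound $\mu_{t}^{e}(B(\cdot, \delta^{\tau})) \lesssim \delta^{\tau(\gamma - \sigma - \epsilon)}$ combined with the disintegration gives $\mu_{e}(S_{j}) \lesssim \mu_{e}(T_{j}) \cdot \delta^{\tau(\gamma - \sigma - \epsilon)}$, hence
\begin{displaymath}
\sum_{j} \mu_{e}(S_{j}) \lesssim \delta^{\tau(\gamma - \sigma - \epsilon)} \ll 1
\end{displaymath}
for \emph{every} $\tau > 0$ and $\delta$ sufficiently small. This is where the dimension conservation principle does the work: it rules out the ``graph-concentration'' scenario that pinned $\tau$ away from $0$ in Theorem~\ref{estimate}. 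Letting $\tau \searrow 0$ in the resulting bound $M \lesssim \delta^{-\sigma - C\tau - O(\epsilon)}$ gives $\overline{\dim}_{\B} E_{i} \leq \sigma + O(\epsilon)$, whence $\dim_{\p} E \leq \sigma$.

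\textbf{Main obstacle.} The upper bound on $\mathcal{E}$. In the original proof one Fubini-swaps the $e$-sum inside a single-measure double integral and invokes $I_{\gamma}(\mu) < \infty$; both steps fail here, because $\mu_{e}$ depends on $e$ and because $I_{\gamma}(\mu_{e}) = \infty$ once $\gamma > 1$. I expect to bound $\mathcal{E}$ term by term, using the uniform decay $\mu_{e}(B(x,r)) \lesssim r^{\gamma - \epsilon}$ and the tube bound $\mu_{e}(T) \lesssim \delta^{\gamma - \Delta_{e} - \epsilon}$ coming from the Frostman decay of $\nu_{e}$. In the range $\gamma > 1$, this may force replacing $|x-y|^{1-\gamma}$ by $|x-y|^{1-\gamma'}$ for any $\gamma' < \min(\gamma, 1)$; this degrades the final bound by only $O(\gamma - \gamma')$ while still allowing the limit $\tau \searrow 0$ and therefore the clean conclusion $\overline{\dim}_{\B} E_{i} \leq \sigma$.
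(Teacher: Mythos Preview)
Your plan has a genuine gap at precisely the point you flag as the ``main obstacle'': the upper bound on $\mathcal{E}$ cannot be salvaged by term-by-term estimates. For a fixed $e$, the best you can extract from the disintegration is
\[
\iint_{\{x \sim_{e} y\}} |x-y|^{1-\gamma}\,d\mu_{e}\,d\mu_{e} \;\lesssim\; \delta^{\gamma - \Delta_{e} - O(\epsilon)},
\]
obtained by combining the base bound $\nu_{e}(I_{\delta}) \lesssim \delta^{\gamma-\Delta_{e}}$ with the fiber bound $\mu_{t}^{e}(B(x,r))\lesssim r^{\Delta_{e}}$. But your \emph{lower} bound on the same term, via Cauchy--Schwarz with $K\lesssim \delta^{-(\gamma-\Delta_{e})}$ tubes, is also $\gtrsim \delta^{\gamma-\Delta_{e}+O(\tau)}$. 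The two match up to $\delta^{O(\tau)}$, so summing over $e\in E_{\delta}$ gives $\mathcal{E}\asymp \sum_{e}\delta^{\gamma-\Delta_{e}}$ on both sides and yields no information whatsoever about $M=\card E_{\delta}$. The Fubini swap in the second proof of Theorem~\ref{estimate} was not a technicality: it is what converts the sum over $e$ into the factor $|x-y|^{-1}$, and with $e$-dependent measures there is no substitute.

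There are also secondary issues. Your claim ``$\dim F_{e}\leq \sigma$'' is not justified (dimension conservation gives only the \emph{lower} bound $\dim F_{e}\geq \gamma-\Delta_{e}$), and in any case a Hausdorff-dimension bound does not control $N(F_{e},\delta)$ at every scale, so the decomposition into $E_{i}$ is not well-founded. Finally, the Frostman constants of the fiber measures $\mu_{t}^{e}$ depend on $t$, so the uniform bound $\mu_{e}(B(x,r))\lesssim r^{\gamma-\epsilon}$ requires an extra pigeonholing that you have not set up.

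The paper's proof avoids all of this by dropping measures entirely. It fixes a single $\delta$-net $K_{0}\subset K$ with $\card K_{0}\asymp \delta^{-\gamma}$ (here the hypothesis $\overline{\dim}_{\B}K=\gamma$ enters) and works with the counting energy $\mathcal{E}=\sum_{e}\card\{(x,y)\in K_{0}^{2}:x\sim_{e}y\}$. The decisive idea is to build the separation into $\sim_{e}$: one requires $|x-y|\geq c$ for a \emph{constant} $c=c(\varepsilon,\sigma,\gamma)>0$ independent of $\delta$, not $|x-y|\geq \delta^{\tau}$. This is possible because the fibers carry positive $(\Delta-\tau)$-Hausdorff \emph{content}, so deleting any interval of length $c$ from a fiber still leaves content $>\varepsilon/2$ on each side, hence $\gtrsim \delta^{-(\Delta-\tau)}$ points of $K_{0}$ in each half-tube. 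With constant separation the upper bound is immediate---for fixed $(x,y)$ there are $O_{c}(1)$ directions $e$ with $x\sim_{e}y$---giving $\mathcal{E}\lesssim (\card K_{0})^{2}\asymp \delta^{-2\gamma}$, while the lower bound gives $\mathcal{E}\gtrsim M\,\delta^{\sigma+3\tau-2\gamma}$. Comparing yields $M\lesssim \delta^{-\sigma-3\tau}$.
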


\begin{proof} If the projection $\rho_{e}$ is dimension conserving, and $\Delta \in \Delta(e)$, then for any $\tau > 0$ we may find $\varepsilon > 0$ such that
\begin{displaymath} H^{\gamma - \Delta - \tau}(\{t : H^{\Delta - \tau}(K \cap \rho_{e}^{-1}\{t\}) > \varepsilon\}) > \varepsilon, \end{displaymath}
where $H^{d}$ stands for $d$-dimensional Hausdorff content. This reduces us to proving the estimate 
\begin{equation}\label{form25} \overline{\dim}_{\B} E_{\varepsilon,\tau} \leq \sigma + 3\tau. \end{equation}
for any $\varepsilon > 0$ and $0 < \tau < \gamma - \sigma$, where 
\begin{displaymath} E_{\varepsilon,\tau} := \{e \in S^{1} : H^{\gamma - \Delta - \tau}(\{t : H^{\Delta - \tau}(K \cap \rho_{e}^{-1}\{t\}) > \varepsilon\}) > \varepsilon \text{ for some } \Delta \geq \gamma - \sigma\}. \end{displaymath}
 Fix $\delta > 0$. At this point, we should mention that in the $\asymp$ and $\lesssim$ notation below, all implicit constants may depend on $\varepsilon,\gamma,K,\sigma$ and $\tau$, but \textbf{not} on $\delta$. Since $\overline{\dim}_{\B} K = \gamma$, we may choose a collection of points $K_{0} := \{x_{1},\ldots,x_{N}\} \subset K$ such that $N \asymp \delta^{-\gamma}$, and
\begin{displaymath} K \subset \bigcup_{n = 1}^{N} B(x_{n},\delta). \end{displaymath}
Given $e \in S^{1}$, define the $\delta$-tubes $\calT_{e}$ by
\begin{displaymath} \calT_{e} = \{\rho_{e}^{-1}[j\delta,(j + 1)\delta) : j \in \Z\}. \end{displaymath}
Let $d = (\gamma - \sigma - \tau)^{-1}$. We define the relation $x \sim_{e} y$ for $x,y \in \R^{2}$:
\begin{displaymath} x \sim_{e} y \quad \Longleftrightarrow \quad |x - y| \geq \left(\frac{\varepsilon}{10}\right)^{d} \text{ and } B(x,\delta) \cap T \neq \emptyset \neq B(y,\delta) \cap T \text{ for some } T \in \calT_{e}. \end{displaymath}
This definition differs from its analogues in the previous proofs in that now we require the points $x$ and $y$ to be separated by a constant independent of $\delta$, and also the strict inclusion $x,y \in T$ is relaxed to $x$ and $y$ being relatively close to a single tube in $\calT_{e}$. Let $E_{0} \subset E_{\varepsilon,\tau}$ be any $\delta$-separated finite set. The energy $\mathcal{E}$ is defined as follows:
\begin{displaymath} \mathcal{E} := \sum_{e \in E_{0}} \card \{(x,y) \in K_{0} \times K_{0} : x \sim_{e} y\}. \end{displaymath}
Once more, we intend to estimate $\mathcal{E}$ from above and below. The estimate from above is easy. If $x,y \in K_{0}$, the number of vectors $e \in E_{0}$ such that $x \sim_{e} y$ is bounded by a constant depending only on $\varepsilon,\gamma,\sigma$ and $\tau$ -- but not on $\delta$. Hence, $\mathcal{E} \lesssim N^{2} \asymp \delta^{-2\gamma}$. To bound $\mathcal{E}$ from below, fix $e \in E_{0}$. By definition of $E_{\varepsilon,\tau}$, there exist $\Delta \geq \gamma - \sigma$ and tubes $T_{1},\ldots,T_{k} \in \calT_{e}$ such that $k \gtrsim \delta^{\Delta + \tau - \gamma}$, and every tube $T_{j}$ contains a line $L_{j} := \rho_{e}^{-1}\{t_{j}\}$ with
\begin{displaymath} H^{\Delta - \tau}(K \cap L_{j}) > \varepsilon. \end{displaymath}
Consider a fixed tube $T_{j}$. If $\delta < (\varepsilon/9)^{d}$, then, by the choice of $d$, the $(\Delta - \tau)$-dimensional Hausdorff content of a rectangle $S$ with dimensions $\delta \times (\varepsilon/9)^{d}$ is no more than $\varepsilon/2$. This implies that 
\begin{equation}\label{form26} H^{\Delta - \tau}([K \cap L_{j}] \setminus S) > \varepsilon/2 \end{equation}
for any such rectangle $S$. A $\delta$-cover of the set $[K \cap L_{j}] \setminus S$ is obtained by all the balls $B(x_{n},\delta)$, $x_{n} \in K_{0}$, which have non-empty intersection with $[K \cap L_{j}] \setminus S$. According to \eqref{form26}, there must be $\gtrsim \delta^{\tau - \Delta}$ such balls, for any choice of $S$. Now, as in the previous proof, we simply choose $S \subset T_{j}$ in such a manner that $T_{j} \setminus S$ is divided into two disjoint $(\varepsilon/9)^{d}$-separated half-tubes $T_{j}^{+}$ and $T_{j}^{-}$ so that
\begin{displaymath} \card \{x_{n} : B(x_{n},\delta) \cap [K \cap L_{j} \cap T_{j}^{\pm}] \neq \emptyset\} \gtrsim \delta^{\tau - \Delta}. \end{displaymath}
Finally, if $x_{m},x_{n} \in K_{0}$ are points such that $B(x_{m},\delta) \cap [K \cap L_{j} \cap T_{j}^{-}] \neq \emptyset$ and $B(x_{n},\delta) \cap [K \cap L_{j} \cap T_{j}^{+}] \neq \emptyset$, we have $|x_{n} - x_{m}| \geq (\varepsilon/9)^{d} - 2\delta \geq (\varepsilon/10)^{d}$ for small enough $\delta$, and this shows that $x_{m} \sim_{e} x_{n}$. By the choice of $S$, there are $\gtrsim \delta^{2(\tau - \Delta)}$ pairs $(x_{m},x_{n})$ with this property. Now we would like to make the estimate
\begin{align*} \card\{(x,y) \in K_{0} \times K_{0} : x \sim_{e} y\} & \gtrsim k \cdot \delta^{2(\tau - \Delta)}\\
& \gtrsim \delta^{\Delta + \tau - \gamma + 2(\tau - \Delta)}\\
& = \delta^{3\tau - \Delta - \gamma} \geq \delta^{3\tau + \sigma - 2\gamma}, \end{align*}
the last inequality being equivalent with $\Delta \geq \gamma - \sigma$. This is correct, but one must be a bit careful, since, in the first inequality, any pair of points $(x_{m},x_{n})$ may be counted several times, if $B(x_{m},\delta) \cap [K \cap L_{j} \cap T_{j}^{-}] \neq \emptyset$ and $B(x_{n},\delta) \cap [K \cap L_{j} \cap T_{j}^{+}] \neq \emptyset$ for multiple indices $j$. We are saved by the fact that any ball of radius $\delta$ may intersect no more than three tubes $T_{j}$, so each pair $(x_{m},x_{n})$ gets counted no more than nine times. This implies that $\mathcal{E}$ can be bounded from below as
\begin{displaymath} \mathcal{E} \gtrsim \card E_{0} \cdot \delta^{3\tau + \sigma - 2\gamma}, \end{displaymath}
and so we have proved that
\begin{displaymath} \card E_{0} \lesssim \delta^{- \sigma - 3\tau}. \end{displaymath}
This gives \eqref{form25} and concludes the proof of the proposition.
\end{proof}

We will now finish the proof of Theorem \ref{estimate2}.

\begin{proof}[Proof of Theorem \ref{estimate2}] If $K$ is compact and homogeneous, it follows from \cite{Fu} that $\dim K = \overline{\dim}_{\B} K$. Thus, the part of Theorem \ref{estimate2} for compact homogeneous sets follows immediately by combining Propositions \ref{prop1} and \ref{prop2}. 

Next, let $K \subset \R^{2}$ be a self-similar set with $\dim K = \gamma$, and let $0 \leq \sigma < \gamma$. If $K$ contains an irrational rotation, it follows from \cite[Theorem 5]{PS} that $\dim K_{e} = \gamma$ for every direction $e \in S^{1}$. So, we may assume that $K$ contains no irrational rotations. Then \cite[Lemma 4.2]{Or} shows that there exists a self-similar set $\tilde{K} \subset K$ satisfying the strong separation condition, containing no rotations, and with $\tilde{\gamma} = \dim \tilde{K} > \sigma$. According to \cite{Fu}, the set $\tilde{K}$ is homogeneous, and certainly also $\overline{\dim}_{\B} \tilde{K} = \tilde{\gamma}$. Hence, it follows from Propositions \ref{prop1} and \ref{prop2} that the set
\begin{displaymath} \tilde{E} := \{e \in S^{1} : \dim \tilde{K}_{e} \leq \sigma\}. \end{displaymath}
satisfies $\dim_{\p} \tilde{E} \leq \sigma$. The proof is finished by observing that 
\begin{displaymath} \{e \in S^{1} : \dim K_{e} \leq \sigma\} \subset \tilde{E}. \end{displaymath}
\end{proof}

\section{The example in Theorem \ref{bigEx}}

Let us say a few words to explain our motivation to see through the construction presented below. If $K \subset \R^{2}$ is a self-similar fractal containing no rotations, then $\dim K_{e} = \dim_{\p} K_{e} = \overline{\dim}_{\B} K_{e}$ for every vector $e \in S^{1}$. It is a long-standing problem, attributed to H. Furstenberg, see \cite[Question 2.5]{PSo}, to determine the largest possible size of the exceptional set $\{e \in S^{1} : \dim K_{e} < \dim K\}$, given that $K \subset \R^{2}$ is self-similar without rotations and $\dim K \leq 1$. It is conjectured that this set should be no more than countable. At some point, it occurred to us that perhaps this conjecture could be verified by showing that the set $\{e \in S^{1} : \dim_{\p} K_{e} < \dim K\}$ is always at most countable, for \textbf{any} set Borel set $K \subset \R^{2}$ with $\dim K \leq 1$. These dreams were put to rest by the emergence of the construction below. The seemingly stronger conclusion in Theorem \ref{bigEx} that the exceptional set may even have large packing dimension is practically free of charge: the construction would be no less tedious, were we only interested in the uncountability of the set $\{e \in S^{1} : \dim_{\p} K_{e} < \dim K\}$. Finally, it is still possible that the approach via general sets and the packing dimension of projections could be used to prove a weaker form of Furstenberg's conjecture, namely that $\dim \{e \in S^{1} : \dim K_{e} < \dim K\} = 0$ for self-similar sets $K \subset \R^{2}$ as above.

Another point worth mentioning relates our example to a 'number theoretic' construction from the 70's. In \cite{KM}, Kaufman and Mattila prove that Kaufman's bound \eqref{kaufman} is sharp by presenting a Borel set $B \subset \R^{2}$ of Hausdorff dimension $\dim B = s \in (0,1]$ such that $\dim \{e \in S^{1} : \dim B_{e} < \dim B\} = s$. It is fair to ask, whether, by lucky coincidence, the projections of the set $B$ might also have small packing dimension: this could potentially be a major trouble-saver and an improvement to Theorem \ref{bigEx}! There is a simple reason why this idea fails: the example of Kaufman and Mattila is a set $B$ of the second category in the plane. Every continuous open surjection, including projections, take sets of second category to sets of second category. It follows immediately that $\dim_{\p} B_{e} = 1$ for every $e \in S^{1}$.

\subsection{Proof of Theorem \ref{bigEx}} We begin by setting up some notation. Let $K_{1}, K_{2} \subset B(0,1/2)$ be compact sets, which are expressible as the unions of certain finite collections $\calG_{1}$ and $\calG_{2}$ of closed balls with disjoint interiors. We define a new set $K_{1} \star K_{2} \subset B(0,1/2)$ by 'taking all the balls in $\calG_{2}$ and scaling and translating them inside each and every ball in $\calG_{1}$'. Formally, if $B \subset \R^{2}$ is a closed ball, let $T_{B}$ be the linear transformation taking $B(0,1/2)$ to $B$ without rotations. Then
\begin{equation}\label{formalDef} K_{1} \star K_{2} := \bigcup_{B \in \calG_{1}} T_{B}(K_{2}). \end{equation}
The set $K_{1} \star K_{2} \subset B(0,1/2)$ is again compact and expressible as the union of $[\card \calG_{1}]\cdot[\card \calG_{2}]$ closed balls with disjoint interiors. The abbreviation 
\begin{displaymath} K^{(m)} := K \star K \star \cdots \star K \end{displaymath}
will be used to denote the $m$-fold $\star$-product of a set $K \subset B(0,1/2)$ with itself. Finally, if $K \subset B(0,1/2)$ is a set expressible as the union of finitely many balls with disjoint interiors, the centers of these balls form a finite set $S_{K} \subset K$, the \emph{skeleton} of $K$. We record some useful relations between $\star$-products and orthogonal projections.

\begin{lemma}\label{projCover} Let $K_{1}, K_{2} \subset B(0,1/2)$ be sets expressible as the finite unions of balls with disjoint interiors, and let $e \in S^{1}$. Then
\begin{displaymath} \card \rho_{e}(S_{K_{1} \star K_{2}}) \leq [\card \rho_{e}(S_{K_{1}})] \cdot [\card \rho_{e}(S_{K_{2}})]. \end{displaymath}
Assume, furthermore, that all the $K_{1}$-balls have common diameter $\delta_{1} \in (0,1]$. Then
\begin{displaymath} N(\rho_{e}(K_{1} \star K_{2}),\delta) \leq N(\rho_{e}(K_{1}),\delta), \qquad \delta > 0, \end{displaymath}
and
\begin{displaymath} N(\rho_{e}(K_{1} \star K_{2}),\delta) \leq [\card \rho_{e}(S_{K_{1}})] \cdot N\left(\rho_{e}(K_{2}),\frac{\delta}{\delta_{1}}\right), \quad \delta > 0. \end{displaymath} 
\end{lemma}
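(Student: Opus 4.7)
The plan is to unpack the definition \eqref{formalDef} directly and track how the homotheties $T_B$ interact with $\rho_e$. For any ball $B = B(x_B,r_B) \in \calG_1$, the map $T_B$ taking $B(0,1/2)$ onto $B$ without rotation is the affine homothety $T_B(z) = x_B + 2r_B z$, so
\[ \rho_e(T_B(z)) = \rho_e(x_B) + 2r_B \, \rho_e(z). \]
I will treat the natural setting in which the balls in $\calG_1$ share a common diameter $\delta_1$ (as used throughout the construction, and as explicitly assumed in parts (ii) and (iii)); then $2r_B = \delta_1$ for every $B \in \calG_1$, and the formula collapses to $\rho_e(T_B(z)) = \rho_e(x_B) + \delta_1 \, \rho_e(z)$.

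For (1), note that $S_{K_1 \star K_2} = \{T_B(y) : B \in \calG_1, \, y \in S_{K_2}\}$, so every element of $\rho_e(S_{K_1 \star K_2})$ is of the form $\rho_e(x_B) + \delta_1 \rho_e(y)$ and is therefore determined by the pair $(\rho_e(x_B), \rho_e(y)) \in \rho_e(S_{K_1}) \times \rho_e(S_{K_2})$. Counting the ordered pairs yields the bound. For (2), observe that $T_B(K_2) \subset T_B(B(0,1/2)) = B \subset K_1$ for every $B \in \calG_1$, so $K_1 \star K_2 \subset K_1$, hence $\rho_e(K_1 \star K_2) \subset \rho_e(K_1)$; monotonicity of $N(\,\cdot\,,\delta)$ gives the inequality.

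For (3), decompose
\[ \rho_e(K_1 \star K_2) \;=\; \bigcup_{B \in \calG_1} \bigl[ \rho_e(x_B) + \delta_1 \, \rho_e(K_2) \bigr]. \]
Two balls $B, B' \in \calG_1$ with $\rho_e(x_B) = \rho_e(x_{B'})$ contribute the same translate, so the union in fact consists of at most $\card \rho_e(S_{K_1})$ distinct sets, each being a $\delta_1$-scaled translate of $\rho_e(K_2)$. Any one such translate can be covered by $N(\delta_1 \rho_e(K_2), \delta) = N(\rho_e(K_2), \delta/\delta_1)$ balls of radius $\delta$ (scaling of covering numbers), and multiplying the two counts gives the claimed estimate.

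The only delicate bookkeeping point is ensuring in (3) that the translates indexed by $B \in \calG_1$ are collapsed according to the projected skeleton $\rho_e(S_{K_1})$ rather than the full skeleton $S_{K_1}$; this is precisely what is gained by factoring the projection of $T_B$ into a pure translation plus the scaled projection $\delta_1 \rho_e(z)$, so that the translation part depends only on $\rho_e(x_B)$. Apart from this, every step is an elementary identity or monotonicity of covering numbers, and no deeper input is required.
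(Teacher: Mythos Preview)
Your proof is correct and follows essentially the same route as the paper's, which is extremely terse (the paper says only that the first inequality ``is clear'', that the second ``follows from $K_1\star K_2\subset K_1$'', and for the third observes that $N(\rho_e[T_B(K_2)],\delta)=N(\rho_e(K_2),\delta/\delta_1)$ and that $\rho_e(x_{B_1})=\rho_e(x_{B_2})$ implies $\rho_e[T_{B_1}(K_2)]=\rho_e[T_{B_2}(K_2)]$). Your explicit factorisation $\rho_e(T_B(z))=\rho_e(x_B)+\delta_1\rho_e(z)$ is exactly what underlies those observations, and you have unpacked the argument cleanly.

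One remark worth keeping: you are right to impose the common-diameter hypothesis already for the skeleton inequality. Without it the map $(\rho_e(x_B),\rho_e(y))\mapsto\rho_e(T_B(y))$ need not be well defined, since two balls $B,B'\in\calG_1$ with $\rho_e(x_B)=\rho_e(x_{B'})$ but different radii can send the same $y\in S_{K_2}$ to points with different projections; one can build small counterexamples to the first inequality in that generality. The paper's statement is slightly loose on this point, but since every $\star$-product in the construction involves sets whose balls have a common diameter, your restriction is exactly the one that is used.
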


\begin{proof} The first inequality is clear and the second follows from $K_{1} \star K_{2} \subset K_{1}$. To prove the remaining inequality, fix $\delta > 0$. Write $\calG_{1}$ for the collection of balls, the union of which is $K_{1}$. Observe that 
\begin{displaymath} N(\rho_{e}[T_{B}(K_{2})],\delta) = N\left(\rho_{e}(K_{2}),\frac{\delta}{\delta_{1}}\right), \qquad B \in \calG_{1}. \end{displaymath}
If $B_{1} = B(x,\delta_{1}) \in \calG_{1}$ and $B_{2} = B(x_{2},\delta_{1}) \in \calG_{1}$ are balls such that $\rho_{e}(x_{1}) = \rho_{e}(x_{2})$, then also $\rho_{e}[T_{B_{1}}(K_{2})] = \rho_{e}[T_{B_{2}}(K_{2})]$. Now the desired estimate follows from \eqref{formalDef}.
\end{proof}

Next, we will introduce, for each $n \in \N$, a compact set $B_{n} \subset B(0,1/2)$, which is expressible as the union of a large but finite collection of closed balls with disjoint interiors and a common diameter. These sets will play the role of 'basic building blocks' in our construction. Indeed, the desired set $K$ will be defined by
\begin{equation}\label{form15} K = \lim_{j \to \infty} (( \cdots ((B_{n_{1},e_{1}}^{(m_{1})} \star B_{n_{2},e_{2}})^{(m_{2})} \star B_{n_{3},e_{3}})^{(m_{3})} \star \cdots )^{(m_{j - 1})} \star B_{n_{j},e_{j}})^{(m_{j})}. \end{equation}
where $B_{n,e}$ refers to a rotated copy of $B_{n}$.
\begin{figure}[h!]
\begin{center}
\includegraphics[scale = 0.8]{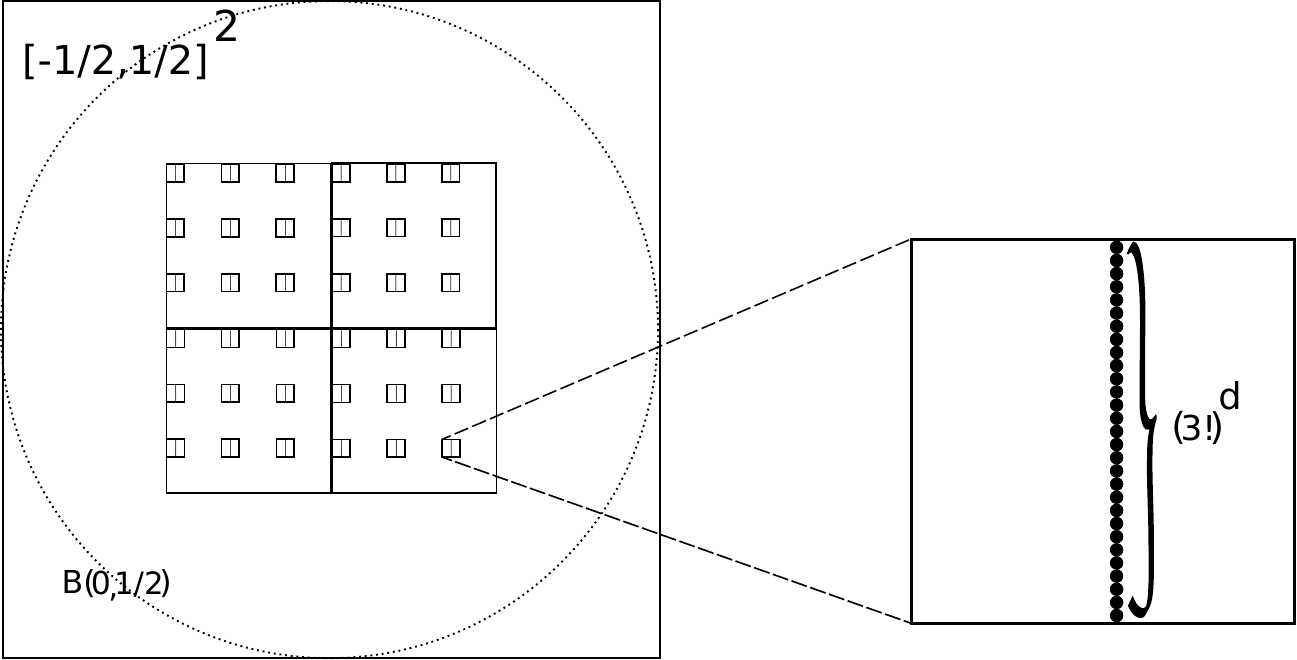}
\caption{The sets $U_{3}$ and $B_{3}$.}\label{U3}
\end{center}
\end{figure}

The set $K_{3}$ is depicted in Figure \ref{U3}. To define $B_{n}$ for general $n$, it is handy to use a variant of the $\star$-product for square collections. If $Q \subset \R^{2}$ is a closed square, let $T_{Q}$ be the linear transformation taking the unit square $[-1/2,1/2]^{2}$ onto $Q$ without rotations. If $K_{1}, K_{2} \subset [-1/2,1/2]^{2}$ are compact sets expressible as the finite unions certain collections $\calG_{1}$ and $\calG_{2}$ of closed suqares with disjoint interiors, define $K_{1} \star K_{2}$ by the familiar formula \eqref{formalDef}, just replacing the two occurences of $B$ by $Q$. Then, in order to define $B_{n}$, 
\begin{itemize}
\item[(a)] let $Q_{1} = [-1/2,1/2]^{2}$, and let $Q_{2} \subset B(0,1/2) \subset [-1/2,1/2]^{2}$ be the set consisting of the four closed squares of side-length $1/4$ and disjoint interiors, which all have a common corner at $(0,0)$,
\item[(b)] let $Q_{n} \subset [-1/2,1/2]^{2}$, $n \geq 3$, be the set consisting of $n^{2}$ closed squares of side-length $n^{-2}$ placed inside the unit square in such a manner that the midpoints form a grid homothetic to $\{1,\ldots,n\}^{2}$, and the distance between vertically or horizontally neighboring midpoints is $n^{-1}$. To specify $Q_{n}$ uniquely, we agree that the to left square has a common corner with $[-1/2,1/2]$. 
\item[(c)] Fix $d \geq 3$, and let $L_{n} \subset [-1/2,1/2]^{2}$, $n \geq 1$, be the set consisting of $(n!)^{d}$ closed squares of side-length $(n!)^{-d}$ and disjoint interiors, whose midpoints lie on the $y$-axis.
\end{itemize}

We write
\begin{displaymath} U_{n} := Q_{1} \star Q_{2} \star \cdots \star Q_{n}, \qquad n \geq 1. \end{displaymath}
The set $U_{3}$ is visible in Figure \ref{U3}. The set $B_{n}$ is defined by replacing every one of the $(n!)^{2 + d}$ squares of $U_{n} \star L_{n}$ by a concentric ball of radius $(n!)^{-2 - d}$. The set $B_{3}$ is also visible in Figure \ref{U3}. The only reason why we had to define $Q_{1}$ differently from the other sets $Q_{n}$ was to ensure that $B_{n} \subset B(0,1/2)$ for all $n \in \N$. For convenience, we also define $B_{0} := (0,1/2)$.

Recalling Lemma \ref{grid}, we say that a direction $e \in S^{1}$ is \emph{rational}, if $e = c(1,pq^{-1})$ for some integers $p,q \in \Z$, $q \neq 0$, and $c = (1 + p^{2}q^{2})^{-1/2}$. The definition of the sets $U_{n}$ and $B_{n}$ may seem complicated, but the precise structure is only needed in the proof of the following lemma; for the rest of the proof of Theorem \ref{bigEx}, we can simply refer to the three properties stated below.

\begin{lemma}\label{heart} Let $e = c(1,pq^{-1}) \in S^{1}$ be a rational direction, let $1/2 < s < 1$, and let $(1 + d)/(2 + d) < t < 1$. Then
\begin{itemize}
\item[(i)] There exists $\delta_{e,s} > 0$ such that 
\begin{displaymath} N(\rho_{e}(B_{n}),\delta) \leq \delta^{-s}, \qquad (n!)^{-2} \leq \delta \leq \delta_{e,s},\: n \in \N. \end{displaymath}
Note that if $(n!)^{-2} > \delta_{e,s}$, the claim says nothing. Moreover,
\begin{displaymath} N(\rho_{e}(B_{n}),\delta) \lesssim_{e,t} \delta^{-t}, \qquad (n!)^{-2 - d} \leq \delta \leq 1. \end{displaymath}
\item[(ii)] Let $S_{n}$ be the skeleton of the set $B_{n}$, that is, $S_{n} = S_{B_{n}}$. Then there exists $n_{e} \in \N$ such that
\begin{displaymath} \card \rho_{e}(S_{n}) \leq (n!)^{t(2 + d)}, \qquad n \geq n_{e}. \end{displaymath}
\item[(iii)] Let $e \in S^{1}$ be a rational direction such that the lines $L = \rho_{e}^{-1}\{t\}$ have negative slope $k(n!)^{-d}$ for some $k \in \{1,\ldots,(n!)^{d - 3}\}$. This simply means that $L$ can be written in the form
\begin{displaymath} L = \{(x,y) : y = -k(n!)^{-d}x + y_{0}\}, \qquad 1 \leq k \leq (n!)^{d - 3}. \end{displaymath} 
The collection of these $(n!)^{d - 3}$ directions will be denoted by $D_{n} \subset S^{1}$. Then $|e - \xi| \gtrsim (n!)^{-d}$ and $|e - (0,1)| \leq 2(n!)^{-3}$ for distinct directions $e,\xi \in D_{n}$. Most importantly,
\begin{displaymath} \card \rho_{\xi}(S_{n}) \leq 3(n!)^{1 + d}, \qquad \xi \in D_{n}, \: n \geq 3. \end{displaymath}
\end{itemize}
\end{lemma}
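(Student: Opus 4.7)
The unifying idea is to exploit the Minkowski-sum decomposition
$S_n = S_{U_n \star L_n} = S_{U_n} + (n!)^{-2} S_{L_n}$
together with
$S_{U_n} = \sum_{j=2}^{n} ((j-1)!)^{-2} S_{Q_j}$,
and the observation that $S_{U_n}$ is itself a Cartesian product $\tilde{A}_n \times \tilde{B}_n$: each $S_{Q_j}$ is a product of two one-dimensional grids, and the Minkowski sum distributes over products via $(A_{1} \times A_{2}) + (B_{1} \times B_{2}) = (A_{1} + B_{1}) \times (A_{2} + B_{2})$. The cardinalities $\card \tilde{A}_n = \card \tilde{B}_n = n!$ come from $\prod_{j=2}^{n} j$, with no collisions across levels since the spacing $1/(j!(j-1)!)$ at level $j$ shrinks factorially faster than the level-$(j-1)$ range. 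With these structural facts in hand, everything reduces to combinatorial applications of Lemma \ref{projCover} and Lemma \ref{grid}.

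For (ii), iterated Lemma \ref{projCover} yields $\card \rho_e(S_n) \leq \prod_{j=2}^{n} \card \rho_e(S_{Q_j}) \cdot \card \rho_e(S_{L_n})$. Lemma \ref{grid} bounds $\card \rho_e(S_{Q_j}) \leq (1+p)(1+q) j$, and a one-dimensional calculation gives $\card \rho_e(S_{L_n}) \leq (n!)^d$ (with $1$ in place of $(n!)^d$ if $p = 0$), so $\card \rho_e(S_n) \leq [(1+p)(1+q)]^{n-1}(n!)^{1+d}$. Since $\alpha := t(2+d) - (1+d) > 0$ by hypothesis, Stirling forces $(n!)^{\alpha}$ to overwhelm $[(1+p)(1+q)]^{n}$ past a threshold $n_e$, producing the claimed bound $\card \rho_e(S_n) \leq (n!)^{t(2+d)}$.

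For (i), given $\delta \in [(n!)^{-2},1]$, I would pick the unique $k \leq n$ with $(k!)^{-2} \leq \delta < ((k-1)!)^{-2}$. Since $B_n$ is contained in $U_k$ up to an $O((n!)^{-2-d})$-fattening (negligible at scale $\delta$), and $U_k$ is the union of $(k!)^2$ squares of side $\leq \delta$ each projecting to an interval of length $\leq \sqrt 2 \delta$, one gets $N(\rho_e(B_n),\delta) \lesssim \card \rho_e(S_{U_k}) \leq [(1+p)(1+q)]^{k-1} k!$ by iterated Lemma \ref{projCover} and Lemma \ref{grid}. Comparing with $\delta^{-s} \geq ((k-1)!)^{2s}$ and using $s > 1/2$, Stirling delivers $((k-1)!)^{2s-1} \geq Ck[(1+p)(1+q)]^{k-1}$ once $k \geq k_0(e,s)$, giving the first sub-claim with $\delta_{e,s} := ((k_0 - 1)!)^{-2}$. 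The second sub-claim is identical for $\delta \geq (n!)^{-2}$ with $t$ in place of $s$, while for $\delta \in [(n!)^{-2-d},(n!)^{-2}]$ each of the $(n!)^{2+d}$ balls of $B_n$ projects to an interval of length $O(\delta)$, so $N(\rho_e(B_n),\delta) \lesssim \card \rho_e(S_n) \leq [(1+p)(1+q)]^{n-1}(n!)^{1+d} \leq C(e,t)\delta^{-t}$ with $C(e,t) := \sup_n [(1+p)(1+q)]^{n}(n!)^{-\alpha} < \infty$.

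The heart of the lemma is (iii). The separation facts follow from a Taylor expansion of $e = (k,(n!)^d)/\sqrt{k^2+(n!)^{2d}}$ in the small parameter $k/(n!)^d$: $|e - (0,1)| \asymp k/(n!)^d \leq (n!)^{-3}$, and $|e - \xi| \asymp |k - k'|/(n!)^d \geq (n!)^{-d}$ for distinct indices $k,k'$. For the cardinality bound, I would parametrize $S_n$ in lattice coordinates: $u \in S_{U_n}$ as $u = (an,bn)/(n!)^2$ with $(a,b) \in \tilde A_n \times \tilde B_n \subset \Z^2$ and $|a|, |b| \leq n!(n-1)!/2$, and $\ell \in S_{L_n}$ as $\ell = (0,m(n!)^{-d})$ with $|m| \leq (n!)^d/2$. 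Up to a positive factor, $\rho_\xi(u + (n!)^{-2}\ell)$ becomes the integer $\Psi(a,b,m) := kan + bn(n!)^d + m$ divided by $(n!)^2$. For fixed $b$ the set $\{kan + m : a \in \tilde A_n,\ |m| \leq (n!)^d/2\}$ lies in an integer interval of width at most $k(n!)^2 + (n!)^d \leq (n!)^{d-1} + (n!)^d \leq 2(n!)^d$ (using $k \leq (n!)^{d-3}$), so contains at most $2(n!)^d$ integers. Different $b$'s produce disjoint $\Psi$-values: a unit step in $b$ shifts $\Psi$ by $n(n!)^d$, which strictly exceeds the within-$b$ range $2(n!)^d$ whenever $n \geq 3$. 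Summing over $\tilde B_n$ yields $\card \rho_\xi(S_n) \leq n! \cdot 2(n!)^d \leq 3(n!)^{1+d}$. The principal obstacle is the bookkeeping needed to justify the product structure of $S_{U_n}$ with the precise cardinalities $\card \tilde A_n = \card \tilde B_n = n!$, together with the delicate checking that the $b$-level separation $n(n!)^d$ beats the within-level oscillation uniformly for every admissible $k$ and every $n \geq 3$.
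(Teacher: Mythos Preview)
Your arguments for (ii) and for the first half of (i) are essentially the paper's own.

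There is a genuine gap in your treatment of the second half of (i). For $\delta \in [(n!)^{-2-d}, (n!)^{-2}]$ you correctly note $N(\rho_e(B_n),\delta) \lesssim \card \rho_e(S_n) \leq [(1+p)(1+q)]^{n-1}(n!)^{1+d}$, but the final inequality $[(1+p)(1+q)]^{n-1}(n!)^{1+d} \leq C(e,t)\delta^{-t}$ fails near the top of the range: at $\delta = (n!)^{-2}$ it would demand $(n!)^{1+d} \lesssim (n!)^{2t}$, impossible since $d \geq 3$ and $t < 1$. Your definition of $C(e,t)$ only makes the inequality true at the single scale $\delta=(n!)^{-2-d}$. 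The $(n!)^{d}$ coming from the $L_n$ factor in $S_n$ is far too wasteful here. The paper fixes this by working with $U_n$ rather than $B_n$: one first covers $\rho_e(U_n)$ at scale $(n!)^{-2}$ using the first half of (i) with the auxiliary exponent $s(t)=\tfrac{(2+d)t-d}{2}>\tfrac12$, obtaining $\lesssim (n!)^{2s(t)}$ intervals, and then subdivides each into $\lesssim (n!)^{-2}/\delta$ subintervals of length $\delta$. The exponent $s(t)$ is engineered so that $(n!)^{2s(t)-2}\delta^{t-1}\leq 1$ throughout the range.

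Your argument for (iii) is correct in substance but takes a genuinely different route. You parametrise $S_n$ in lattice coordinates and bound $\card\rho_\xi(S_n)$ directly by fibering over $b$, using that the within-$b$ range of $\Psi$ is at most $2(n!)^d$ while the between-$b$ separation is at least $n(n!)^d$. Two small points deserve a line each: the coordinates of $S_{U_n}$ lie in $\tfrac{n}{(n!)^2}\Z$ only up to a fixed additive offset (by Lemma~\ref{xCoord} the $x$-coordinates are half-integers over $(n!)^2$), which is harmless for cardinality; and the claim that all level-$j$ spacings are integer multiples of $n/(n!)^2$ comes down to $((j-1)!)^{-2}/j=(j!(j-1)!)^{-1}$ dividing $(n!(n-1)!)^{-1}$, which is immediate. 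The paper instead introduces the auxiliary set $S_n^+ := S_n \cup [S_n + (0,(n!)^{-2})] \cup [S_n - (0,(n!)^{-2})]$ and shows geometrically that any line of slope $-k(n!)^{-d}$ through a point of $S_n$ meets $S_n^+$ once on each of the $n!$ vertical lines carrying $S_n$; pigeonhole on $\card S_n^+=3(n!)^{2+d}$ then yields the bound. Your approach is more algebraic and avoids the auxiliary set; the paper's is more geometric and makes visible why the factor $3$ arises (it is the vertical buffer needed to absorb the drift of the slanted line across the full horizontal extent).
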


\begin{proof} We will prove both the claims in (i) for $N(\rho_{e}(U_{n}),\delta)$ instead of $N(\rho_{e}(B_{n}),\delta)$: this is fine, since $N(\rho_{e}(B_{n}),\delta) \leq N(\rho_{e}(U_{n}),\delta)$ for any $e \in S^{1}$ and $\delta > 0$. Fix $n \in \N$ and let $(n!)^{-2} \leq \delta \leq 1$. We pursue an estimate for $\log N(\rho_{e}(U_{n}),\delta)/-\log \delta$. Let $m = m_{\delta} \in \N$ be the greatest number such that $[(m - 1)!]^{-2} > \delta$. Then $m \leq n$. Denote by $S_{U_{m}}$ the skeleton of $U_{m}$: thus, $S_{U_{m}}$ is the collection of the $(m!)^{2}$ midpoints of the squares, which form $U_{m}$. The first estimate in Lemma \ref{projCover} clearly also holds for the $\star$-products of square unions, so we have
\begin{displaymath} \card \rho_{e}(S_{U_{m}}) \leq \prod_{j = 1}^{m} \card \rho_{e}(S_{Q_{j}}), \end{displaymath}
where $S_{Q_{j}}$ is the skeleton of $Q_{j}$. Now, recalling Lemma \ref{grid} and observing that $S_{Q_{j}}$ is a dilated copy of $\{1,\ldots,j\} \times \{1,\ldots,j\} \subset \R^{2}$, we have
\begin{equation}\label{form13} \card \rho_{e}(S_{U_{m}}) \leq \prod_{j = 1}^{m} [(1 + p)(1 + q)j] = [(1 + p)(1 + q)]^{m} \cdot m! \end{equation}
for the rational direction $e = c(1,pq^{-1}) \in S^{1}$. Write $c_{p,q} := (1 + p)(1 + q)$. The side-lengths of the squares forming $U_{m}$ equal $(m!)^{-2}$, so the projection $\rho_{e}(U_{m})$ consists of intervals of length no more than $2(m!)^{-2} \leq 2\delta$, whose midpoints lie in the set $\rho_{e}(S_{U_{m}})$. These intervals can be covered by $\leq 4c_{p,q}^{m} \cdot m!$ intervals of length $\delta$, which combined with the well-known fact $\log m! \asymp m \log m$ yields
\begin{align*} \frac{\log N(\rho_{e}(U_{n}),\delta)}{-\log \delta} & \leq \frac{\log N(\rho_{e}(U_{m}),\delta)}{\log ([(m - 1)!]^{2})} \leq \frac{\log (4c_{p,q}^{m}m!)}{\log [(m - 1)!]^{2}}\\
& \lesssim \frac{m\log c_{p,q} + m \log m}{2(m - 1) \log (m - 1)} =: E(m). \end{align*} 
Now, note that $E(m) \to 1/2$ as $m \to \infty$. But $m = m_{\delta} \to \infty$ as $\delta \to 0$, whence the first inequality in (i) follows.

The second inequality in (i) is an immediate consequence of the first. Given $t > (d + 1)/(d + 2)$, apply the first inequality with
\begin{displaymath} s = s(t) := \frac{(t - 1)(2 + d) + 2}{2} = \frac{(2 + d)t}{2} - \frac{d}{2} > \frac{1 + d}{2} - \frac{d}{2} = \frac{1}{2}, \end{displaymath}
to conclude that
\begin{displaymath} N(\rho_{e}(U_{n}),\delta) \lesssim_{e,t} \delta^{-s(t)}, \qquad (n!)^{-2} \leq \delta \leq 1. \end{displaymath}
If $(n!)^{-2 - d} \leq \delta \leq (n!)^{-2}$, we first apply the previous inequality with interval length $(n!)^{-2}$ to find $\lesssim_{e,t} (n!)^{2s(t)}$ intervals of length $(n!)^{-2}$, which cover $\rho_{e}(U_{n})$. Then we split these intervals into $\leq 2(n!)^{-2}/\delta$ intervals of length $\delta$ to obtain a covering of $\rho_{e}(U_{n})$ with $\delta$-intervals of cardinality $\lesssim_{e,t} (n!)^{2s(t) - 2}/\delta$. All this yields
\begin{align*} N(\rho_{e}(U_{n}),\delta)\delta^{t} \lesssim_{e,t} (n!)^{2s(t) - 2}\delta^{t - 1} \leq (n!)^{2s(t) - 2}(n!)^{(1 - t)(2 + d)} = 1 \end{align*}
by the choice of $s(t) > 1/2$.

The inequality in (ii) follows from the estimate \eqref{form13}, which shows that
\begin{displaymath} \lim_{n \to \infty} \frac{\log \card \rho_{e}(S_{U_{n}})}{\log n!} \leq 1. \end{displaymath}  
for any fixed rational direction $e = c(1,pq^{-1}) \in S^{1}$. In particular, since $(t - 1)(2 + d) + 2 > 1$, we have
\begin{displaymath} \card \rho_{e}(S_{U_{n}}) \leq (n!)^{(t - 1)(2 + d) + 2} \end{displaymath}
for sufficiently large $n \in \N$. Then, according to the first estimate in Lemma \ref{projCover}, it follows that
\begin{align*} \card \rho_{e}(S_{n}) := \card \rho_{e}(S_{B_{n}}) & \leq [\card \rho_{e}(S_{U_{n}})] \cdot [\card S_{L_{n}}]\\
& \leq (n!)^{(t - 1)(2 + d) + 2} \cdot (n!)^{d} = (n!)^{t(2 + d)} \end{align*}
for sufficiently large $n \in \N$. 

Everything about (iii) is an immediate consequence of the definition of the directions $\xi \in D_{n}$ except for the estimate $\card \rho_{\xi}(S_{n}) \leq 3(n!)^{1 + d}$. To prove this, we need
\begin{lemma}\label{xCoord} Let $(x,y) \in S_{n}$. Then $x = (r + 1/2)(n!)^{-2}$ for some $r \in \N$.
\end{lemma}
\begin{proof} Easy induction. \end{proof}

The estimate in (iii) will follow from
\begin{claim}\label{lineIntersect} Let $n \geq 3$, and let $L$ be a line with negative slope $k(n!)^{-d}$ for some $k \in \{1,\ldots,n^{d - 2}\}$. Then either $L$ has empty intersection with $S_{n}$, or $L$ meets
\begin{displaymath} S^{+}_{n} := S_{n} \cup [S_{n} + (0,(n!)^{-2})] \cup [S_{n} - (0,(n!)^{-2})] \end{displaymath}
in a set of $n!$ points.
\end{claim}

\begin{figure}[h!]
\begin{center}
\includegraphics{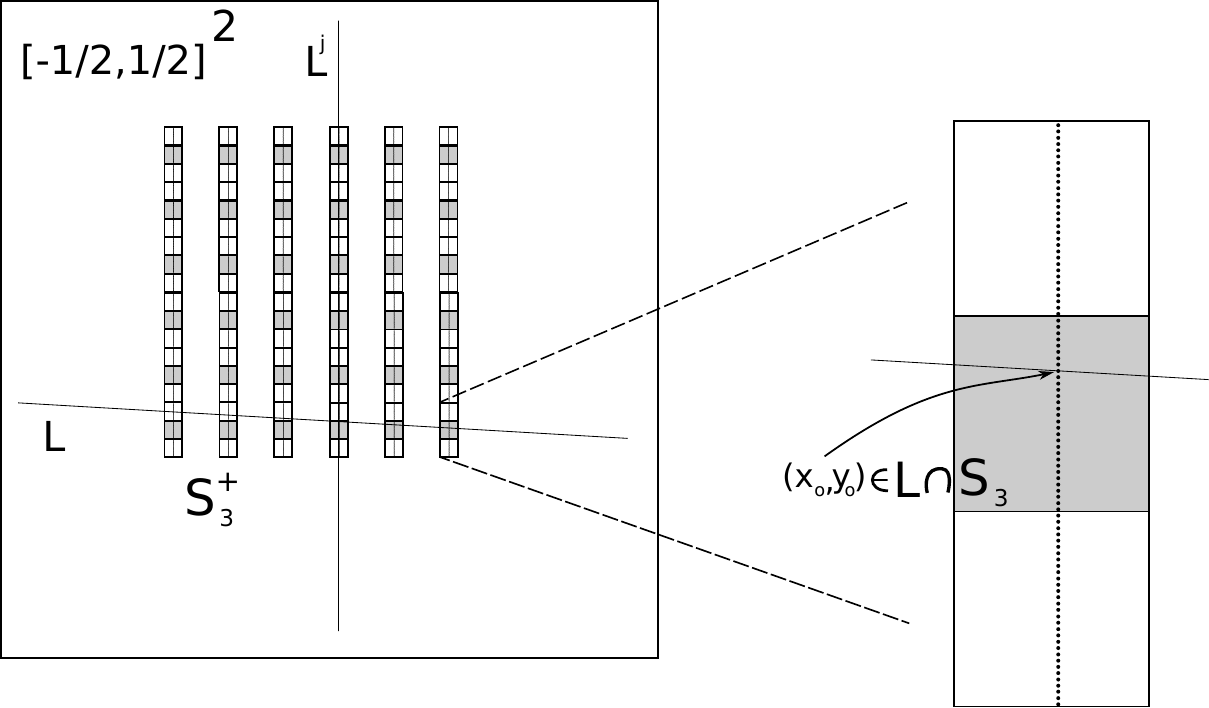}
\caption{An illustration of the set $S^{+}_{3}$ and a line $L$ with negative slope $k(n!)^{-d}$ meeting $S_{3}$. The grey squares are in $U_{3}$, but the white squares are shown only for artistic reasons: the set $S_{3}^{+}$ consists not of the squares, but the small dots inside them.}\label{S+}
\end{center}
\end{figure}

See Figure \ref{S+} for a picture of the set $S^{+}_{3}$. Let us finish the proof of (iii), assuming this claim. Note that the set $S_{n}^{+}$, $n \geq 3$, consists of $3(n!)^{2 + d}$ points, since the three sets in the definition of $S_{n}$ are disjoint for $n \geq 3$ and contain $(n!)^{2 + d}$ points each. Now suppose that $\xi \in D_{n}$ and $t \in \rho_{\xi}(S_{n})$. This means that $L := \rho_{\xi}^{-1}\{t\}$, a line with negative slope $k(n!)^{-d}$, intersects $S_{n}$. Then, according to the claim, $\card [L \cap S_{n}^{+}] = n!$. For distinct $t,t' \in \rho_{\xi}(S_{n})$, the sets $L \cap S_{n}^{+}$ are disjoint and contained in $S_{n}^{+}$. Thus,
\begin{displaymath} 3(n!)^{2 + d} = \card S_{n}^{+} \geq \card \rho_{\xi}(S_{n}) \cdot n!, \end{displaymath}
which gives the required estimate. 

Now we just need to verify Claim \ref{lineIntersect}. Let $L$ be a line with negative slope $k(n!)^{-d}$, $k \in \{1,\ldots,(n!)^{d - 2}\}$, such that $L \cap S_{n} \neq \emptyset$. Observe that all the points in $S_{n}$ lie on $n!$ vertical lines $L^{1},\ldots,L^{n!}$, and, according to Lemma \ref{xCoord}, the difference between the $x$-coordinates of any pair of these lines has the form $r(n!)^{-2}$ for some number $r \in \Z$: this difference has absolute value at most one, so we have  $|r| \leq (n!)^{2}$. Since $L$ itself is not vertical, $L$ intersects every one of the lines $L^{j}$: what we need to prove is that the point in $L \cap L^{j}$ is contained in $S_{n}^{+}$ for $1 \leq j \leq n!$. Here comes the key feature of the set $S_{n}^{+}$: if $(x_{o},y_{o}) \in S_{n}$, then
\begin{equation}\label{form14} \{y : (x,y) \in S_{n}^{+} \cap L^{j}\} \supset \{y_{o} + s(n!)^{-2 - d} : s \in \Z \text{ and } |s| \leq (n!)^{d}\} \end{equation}
for any $1 \leq j \leq n!$. In other words, for any $j \in \{1,\ldots,n!\}$, the $y$-coordinates of the set $S_{n}^{+} \cap L^{j}$ contain all the rationals of the form $y_{o} + s(n!)^{-2 - d}$, $|s| \leq (n!)^{d}$. This property follows immediately from the definitions of $S_{n}$ and $S_{n}^{+}$, and, in particular, the fact that the $y$-coordinates of the $(n!)^{d}$ points of $S_{n}$ inside any given square in $U_{n}$ are are placed at intervals $(n!)^{-2 - d}$ (see the 'magnification' on the right half of Figure \ref{S+}). 

To prove Claim \ref{lineIntersect}, fix $(x_{o},y_{o}) \in L \cap S_{n}$: such a point exists by assumption. Let $(x,y)$ be the intersection of $L$ with any line $L^{j}$, $1 \leq j \leq n!$. Then we have $x = x_{o} + r(n!)^{-2}$ for some $r \in \Z$ with $|r| \leq (n!)^{2}$. Hence, by definition of $L$,
\begin{displaymath} y = y_{o} - k(n!)^{-d}r(n!)^{-2} = y_{o} - kr(n!)^{-2 - d}. \end{displaymath}
Now it suffices to note that $kr \in \Z$ and $|kr| \leq (n!)^{2}(n!)^{d - 3} = (n!)^{d - 1} \leq (n!)^{d}$. According to \eqref{form14}, this shows that $(x,y) \in S_{n}^{+} \cap L^{j}$, and Claim \ref{lineIntersect} is proven.
\end{proof}

Now, as we start to construct the sets $K$ and $E$ of Theorem \ref{bigEx}, we may forget (almost) all about the sets $B_{n}$, and only keep in mind the properties listed in the previous lemma. Fix $\sigma \in (3/4,1)$ as in the statement of Theorem \ref{bigEx}, then choose $d \in \N$ with $d \geq 3/(1 - \sigma) > 3$. Also, pick a number $\tau = \tau(\sigma) \in ((d + 1)/(d + 2),1)$. We are now prepared to construct a compact set $K \subset B(0,1/2)$ and an exceptional set $E \subset S^{1}$ such that $\dim_{\p} E \geq \sigma$, and $\dim_{\p} K_{e} \leq \tau(\sigma)$ for every direction $e \in E$. In fact, we will even prove that $\overline{\dim}_{\B} K_{e} \leq \tau(\sigma)$ for $e \in E$, but this 'strengthening' is nothing but cosmetic according to Lemma \ref{packingIsBox}. The constructions of $K$ and $E$ proceed by induction. In our situation, however, it seems awkward to use linear induction along the natural numbers: a more flexible index set is a \emph{tree}. This is a graph $T$ with with a \emph{root} vertex $r \in T$ such that every vertex $v \in T$ has $(n_{v}!)^{d - 3}$ \emph{children} for some $n_{v} \in \N$.\footnote{The number $n_{v}$ of children will be chosen recursively, so it is not exactly well-defined to speak of the tree $T$ at this point: the infinite tree $T$ will be the end result of our induction.} Every vertex $v \in T \setminus \{r\}$ also has a unique \emph{parent} $p(v) \in T$ in the tree. The \emph{height} of a vertex $v \in T$, denoted by $h(v) \in \N$, is the distance of $v$ to the root vertex in the tree metric: thus $h(r) = 0$, and $h(v) = h(p(v)) + 1$ for $v \in T \setminus \{r\}$. To each vertex $v \in T$ we will, by a recursive procedure, associate the following items:
\begin{itemize} 
\item[(i)] a rational direction $e_{v} \in S^{1}$ and a number $c_{v} \in [1,2)$,
\item[(ii)] a compact set $K_{v} \subset B(0,1/2)$, which is the union of a collection of $k_{v} \geq h(v)$ closed balls with disjoint interiors and common diameter $\delta_{v} = k_{v}^{-1}$,
\item[(iii)] a closed arc $I_{v} \subset S^{1}$ of length $\calH^{1}(I_{v}) = \delta_{v}$, the midpoint of which is $e_{v}$.
\end{itemize}
Here are the desired properties of these parameters:
\begin{itemize}
\item[(iv)] The arcs $I_{v}$ are either nested or disjoint. If $v,w \in T$, then $I_{v} \subset I_{w}$, if and only if $v$ is a direct descendant of $w$. 
\item[(v)] All the sets $K_{v}$, $v \in T$, are nested (but we might well have $K_{v} = K_{w}$ for two distinct vertices $v,w \in T$). In particular, if $V \subset T$ is a finite collection of vertices, there exists $b \in V$ such that $K_{b} \subset K_{w}$ for all $w \in V$.
\item[(vi)] If $v \in T$, then the $(n_{v}!)^{d - 3}$ points $e_{w}$ corresponding to the children of $v$ lie in $I_{v}$ and are at distance $\gtrsim (n_{v}!)^{- d}$ from each other. 
\item[(vii)] If $e \in I_{v}$, then 
\begin{displaymath} N(K_{e},c_{v}\delta) \lesssim \delta^{-\tau}, \qquad \delta_{v} \leq \delta \leq 1. \end{displaymath}
\end{itemize}

Once we manage to get so far, we will set
\begin{displaymath} K := \bigcap_{v \in T} K_{v} \subset B(0,1) \quad \text{and} \quad E := \bigcap_{n = 0}^{\infty} \bigcup_{h(v) = n} I_{v} \subset S^{1}. \end{displaymath}
Let us quickly see how it follows from (vi) and (vii) that $\dim_{\p} K_{e} \leq \tau$ for $e \in E$ and $\dim_{\p} E \geq \sigma$. If $e \in E$, then $e \in I_{v}$ for infinitely many vertices $v \in T$. Since $\delta_{v} \to 0$ as $h(v) \to \infty$, we see immediately from (vii) that $N(K_{e},\delta)\delta^{\tau} \lesssim 1$ for all $\delta \in (0,1]$. To see that $\dim_{\p} E \geq \sigma$, one uses (vi), the information $d \geq 3/(1 - \sigma)$, and the same argument that proved in Construction \ref{setE} that the exceptional set there had packing dimension one. 

Let us initiate the construction. At first, our tree contains only one vertex, the root $r$. We start by defining $e_{r}$, $c_{r}$ and $K_{r}$: note that, by (iii), the arc $I_{r} \subset S^{1}$ is then uniquely determined by these parameters. We set $e_{r} = (0,1)$ and $c_{r} = 1$. The set $K_{r}$ is defined as the union of \textbf{the} $k_{r} \in \N$ closed balls $B \subset B(0,1/2)$ with disjoint interiors and diameter $\delta_{r} = k_{r}^{-1}$, whose centers lie on the line segment $[-1/2,1/2]$. How large should we take $k_{r}$? Lemma \ref{heart}(i) applied with $e = e_{r} = (0,1)$ implies that there exists a constant $c_{\tau} > 0$ such that
\begin{displaymath} N(\rho_{e_{r}}(B_{n}),\delta) \leq c_{\tau}\delta^{-\tau}, \qquad (n!)^{-2 - d} \leq \delta \leq 1. \end{displaymath}
Note that $S_{K_{r}} \subset \R$, so $\rho_{e_{r}}(S_{K_{r}}) = \{0\}$. Using Lemma \ref{projCover}, this implies that
\begin{displaymath} N(\rho_{e_{r}}(K_{r} \star B_{n}),\delta) = 1 \leq \delta^{-\tau}, \qquad \delta_{r} \leq \delta \leq 1, \end{displaymath}
and
\begin{displaymath} N(\rho_{e_{r}}(K_{r} \star B_{n}),\delta) \leq N\left(\rho_{e_{r}}(B_{n}),\frac{\delta}{\delta_{r}}\right) \leq [c_{\tau}\delta_{r}^{\tau}] \cdot \delta^{-\tau}, \quad \delta_{r}(n!)^{-2 - d} \leq \delta < \delta_{r}. \end{displaymath}
Now, we choose $k_{r} \in \N$ so large that $c_{\tau}\delta_{r}^{\tau} = c_{\tau}/k_{r}^{\tau} \leq 1$. Then the previous inequalities combined show that
\begin{equation}\label{form4} N(\rho_{e_{r}}(K_{r} \star B_{n}),\delta) \leq \delta^{-\tau}, \qquad \delta_{r}(n!)^{-2 - d} \leq \delta \leq 1, \end{equation}
for any $n \in \N$. 

Now $e_{r}$, $I_{r}$ and $K_{r}$ have been defined. Before we proceed, let us introduce one last piece of notation. If $e \in S^{1}$, let $R_{e} \colon \R^{2} \to \R^{2}$ be the rotation, which takes $(0,1)$ to $e$. If $n \in \N$, we write $B_{n,e} := R_{e}(B_{n})$. Now we will formulate an induction hypothesis:
\begin{itemize}
\item[(IND)] Suppose that we have already constructed a finite tree $T_{0}$ and associated to each vertex $v \in T_{0}$ the parameters $e_{v}$, $K_{v}$ and $I_{v}$ so that properties (i)--(v) hold. Moreover, if $v \in T_{0}$ is not a leaf vertex,\footnote{That is, if $v$ has children in $T_{0}$} then suppose that the number of children is $(n_{v}!)^{d - 3}$ for some $n_{v} \in \N$, and (vi) holds for $v$. According to (v), there exists $b \in T_{0}$ such that $K_{b} \subset K_{v}$ for all $v \in T_{0}$. We \textbf{assume} that
\begin{displaymath} N(\rho_{\xi}(K_{b} \star B_{n,e}),c_{b}\delta) \leq \delta^{-\tau}, \qquad \delta_{b}(n!)^{-2 - d} \leq \delta \leq 1 \end{displaymath}
for every pair of directions $e,\xi \in \{e_{v} : v \in T_{0}\}$ and for every $n \in \N \cup \{0\}$.
\end{itemize}

The content of \eqref{form4} is precisely that the the parameters associated with the root vertex $r \in T$ satisfy (IND) (and (IND) is the reason why we could not initiate the induction in any simpler manner). Pick any leaf vertex $v \in T_{0}$. Next, we will define $n_{v}$, the number of children of $v$ in $T$, and determine the values of $K_{w}$, $e_{w}$, $c_{w}$ and $I_{w}$ for all the children $w$. All of this has to be done so that (IND) remains valid for the augmented tree $T_{0} \cup \{w :  p(w) = v\}$. Already now, we mention that for every child $w$ of $v$, the set $K_{w}$ and the number $c_{w}$ will be the same, but the directions $e_{w}$ will be distinct. 

Let $n \in \N$, and consider the directions $D_{n}$ defined in Lemma \ref{heart}(iv). If $\xi \in D_{n}$, recall that $|\xi - (0,1)| \leq 2(n!)^{-3}$. Thus, the rotated directions $R_{e_{v}}(\xi)$, $\xi \in D_{n}$, satisfy $|R_{e_{v}}(\xi) - e_{v}| \leq 2(n!)^{-3}$. This shows that we may pick $n = n_{v}$ so large $R_{e_{v}}(D_{n}) \subset \operatorname{int} I_{v}$. The rational directions $e_{w}$ corresponding to the children of $v$ in $T$ are now defined to be the directions in $R_{e_{v}}(D_{n_{v}})$:
\begin{displaymath} \{e_{w} : p(w) = v\} = R_{e_{v}}(D_{n_{v}}). \end{displaymath}
Note that the distance between distinct $e_{w}$ is $\gtrsim (n!)^{-d}$ according to Lemma \ref{heart}(iii): thus (vi) holds for $v$.

As we hinted much earlier, in \eqref{form15} to be precise, the set $K_{w}$ (for any child $w$ of $v$) will have the form $K_{w} = (K_{b} \star B_{n_{v},e_{v}})^{(m_{v})}$ for some large $m_{v},n_{v} \in \N$. One criterion for the size of $n_{v}$ was already given, but there are more. Denote by $S^{n}$, $n \in \N$, the skeleton of $K_{b} \star B_{n,e_{v}}$, and write $S_{n,e}$, $e \in S^{1}$, $n \in \N$, for the skeleton of $B_{n,e}$: thus $S_{n,e} = R_{e}(S_{n})$, where $S_{n}$ is -- as before -- the skeleton of $B_{n}$. Then choose some $t \in ((d + 1)/(d + 2),\tau)$. According to Lemmas \ref{projCover} and \ref{heart}(ii), (iii), we may choose $n_{v} \in \N$ so large that
\begin{align} \card \rho_{\xi}(S^{n_{v}}) & \leq [\card \rho_{\xi}(S_{K_{b}})] \cdot [\card \rho_{\xi}(S_{n_{v},e_{v}})]\notag\\
&\label{form5} \leq [\card S_{K_{b}}] \cdot [\card \rho_{R_{e_{v}}^{-1}(\xi)}(S_{n_{v}})] \leq (n_{v}!)^{t(2 + d)} \end{align}
for all directions $\xi \in \{e_{w} : w \in T_{0}\} \cup R_{e_{v}}(D_{n_{v}})$: the reason is simply that Lemma \ref{heart}(ii) can be applied to the \textbf{finite} collection $R_{e_{v}}^{-1}(\{e_{w} : w \in T_{0}\})$ of rational directions, and the vectors $\xi \in R_{e_{v}}(D_{n_{v}})$ are handled using the bound in Lemma \ref{heart}(iii). The size of the constant $\card S_{K_{b}}$ has no bearing on the result: we can first apply Lemma \ref{heart}(ii) and (iii) with some $t'$ slightly smaller than $t$ to obtain $\card \rho_{R_{e_{v}}^{-1}(\xi)}(S_{n_{v}}) \leq (n_{v}!)^{t'(2 + d)}$ for all vectors $\xi$ as above, and then note that $[\card S_{K_{b}}] \cdot [\card \rho_{R_{e_{v}}^{-1}(\xi)}(S_{n_{v}})] \leq (n_{v}!)^{t(2 + d)}$ for $n_{v}$ large enough, of course depending on $\card S_{K_{b}}$. 

There will be three more conditions on the size of $n_{v}$. Let
\begin{displaymath} s(\tau) := \frac{(\tau - 1)(2 + d) + 2}{2} = \frac{(2 + d)\tau}{2} - \frac{d}{2} > \frac{1 + d}{2} - \frac{d}{2} = \frac{1}{2}, \end{displaymath}
and choose $1/2 < s < s(\tau)$. According to Lemma \ref{heart}(i), there exists a constant $\delta_{s} > 0$ such that
\begin{displaymath} N(\rho_{e_{v}}(B_{n,e_{v}}),\delta) = N(\rho_{(0,1)}(B_{n}),\delta) \leq \delta^{-s}, \qquad (n!)^{-2} \leq \delta \leq \delta_{s}. \end{displaymath}
This combined with Lemma \ref{projCover} shows that
\begin{align*} N(\rho_{e_{v}}(K_{b} \star B_{n,e_{v}}),\delta) & \leq [\card \rho_{e_{v}}(S_{K_{b}})] \cdot N\left(\rho_{e_{v}}(B_{n,e_{v}}),\frac{\delta}{\delta_{b}}\right)\\
& \leq [\card S_{K_{b}} \cdot \delta_{b}^{s}] \cdot \delta^{-s}, \qquad \delta_{b}(n!)^{-2} \leq \delta \leq \delta_{b}\delta_{s}. \end{align*} 
Now we have to, first, choose $n = n_{v}$ so large that $(n_{v}!)^{-2} \leq \delta_{s}$ and, second, so large that $[\card S_{K_{b}}] \cdot (n_{v}!)^{2s} \leq (n_{v}!)^{2s(\tau)}/2$. Then the previous inequality applied with $\delta = \delta_{b}(n_{v}!)^{-2}$ gives
\begin{equation}\label{form7} N(\rho_{e_{v}}(K_{b} \star B_{n_{v},e_{v}}),\delta_{b}(n_{v}!)^{-2}) \leq [\card S_{K_{b}} \cdot \delta_{b}^{s}] \cdot (\delta_{b}(n_{v}!)^{-2})^{-s} \leq (n_{v}!)^{2s(\tau)}/2. \end{equation} 
The final condition on $n_{v}$ is this: $n_{v}$ must be chosen so large that
\begin{displaymath} c_{w} := c_{b}\left(1 + \frac{2(n_{v}!)^{-3}}{c_{b}\delta_{b}(n_{v}!)^{-2}}\right) < 2. \end{displaymath}
As we remarked earlier, this definition of $c_{w}$ is common for all the children $w$ of $v$. Now we are ready to prove that
\begin{equation}\label{form6} N(\rho_{\xi}(K_{b} \star B_{n_{v},e_{v}}),c_{w} \delta) \leq \delta^{-\tau}, \qquad \delta_{b}(n_{v}!)^{-2 - d} \leq \delta \leq 1 \end{equation}
for all $\xi \in \{e_{w} : w \in T_{0}\} \cup R_{e_{v}}(D_{n_{v}})$. If $\xi = e_{w}$ for some $w \in T_{0}$, then \eqref{form6} holds by (IND), since $c_{w} \geq c_{b}$. So, let $\xi = R_{e_{v}}(d)$ for some $d \in D_{n_{v}}$. As noted before, $\xi$ satisfies the estimate $|\xi - e_{v}| \leq 2(n_{v}!)^{-3}$. It follows from this and the definition of $c_{w}$ that if $\delta_{b}(n_{v}!)^{-2} \leq \delta \leq 1$, and $\rho_{e_{v}}(K_{b} \star B_{n_{v},e_{v}})$ can be covered with, say, $k$ intervals of length $c_{b}\delta$, then $\rho_{\xi}(K_{b} \star B_{n_{v},e_{v}})$ can be covered by the $k$ intervals with the same midpoints but the slightly larger length $c_{w}\delta$.\footnote{Any definition of $c_{w} \in (1,2)$ such that this requirement is satisfied would be ok, so there is no further magic behind the complicated looking definition.} In other words,
\begin{displaymath} N(\rho_{\xi}(K_{b} \star B_{n_{v},e_{v}}),c_{w} \delta) \leq N(\rho_{e_{v}}(K_{b} \star B_{n_{v},e_{v}}),c_{b}\delta) \leq \delta^{-\tau}, \qquad \delta_{b}(n_{v}!)^{-2} \leq \delta \leq 1. \end{displaymath}
But this is not quite \eqref{form6} yet. Next, let $\delta_{b}(n_{v}!)^{-2 - d} \leq \delta < \delta_{b}(n_{v}!)^{-2}$. According to \eqref{form7}, the set $\rho_{e_{v}}(K_{b} \star B_{n_{v},e_{v}})$ can be covered with $(n_{v}!)^{2s(\tau)}/2$ intervals of length $\delta_{b}(n_{v}!)^{-2}$: note that this estimate is slightly better than the previous bound applied with $\delta = \delta_{b}(n_{v}!)^{-2}$. Once more exploiting the fact $|\xi - e_{v}| \leq 2(n_{v}!)^{-3}$ and the definition of $c_{w}$, the same intervals amplified by a factor of $c_{w}$ suffice to cover $\rho_{\xi}(K_{b} \star B_{n_{v},e_{v}})$. A covering of $\rho_{\xi}(K_{b} \star B_{n_{v},e_{v}})$ with $c_{w}\delta$-intervals is then simply obtained by splitting all the intervals of length $c_{w}\delta_{b}(n_{v}!)^{-2}$ into $2\delta_{b}(n_{v}!)^{-2}/\delta$ intervals of length $c_{w}\delta$. The total number of $c_{w}\delta$-intervals required to cover $\rho_{\xi}(K_{b} \star B_{n_{v},e_{v}})$ is hence bounded above by $(n_{v}!)^{2s(\tau)}\delta_{b}(n_{v}!)^{-2}/\delta$, which gives
\begin{align*} N(\rho_{\xi}(K_{b} \star B_{n_{v},e_{v}}),c_{w}\delta)\delta^{\tau} & \leq (n_{v}!)^{2s(\tau)}\delta_{b}(n_{v}!)^{-2}\delta^{\tau - 1}\\
& \leq (n_{v}!)^{2s(\tau)}\delta_{b}(n_{v}!)^{-2}[\delta_{b}(n_{v}!)^{-2 - d}]^{\tau - 1}\\
& = \delta_{b}^{\tau}(n_{v}!)^{2s(\tau) - (\tau - 1)(2 + d) - 2} = \delta_{b}^{\tau} \leq 1. \end{align*}
This proves \eqref{form6} and finishes the definition of $n_{v}$. Now that the number of children of $v$ has been permanently determined, it is certainly well-defined to write  $T_{+} := T_{0} \cup \{w : p(w) = v\}$.

It remains to fix $m_{v} \in \N$. Recall that $S^{n_{v}}$ was the skeleton of $K_{b} \star B_{n_{v},e_{v}}$. If $(\delta_{b}(n_{v}!)^{-2 - d})^{2} \leq \delta \leq \delta_{b}(n_{v}!)^{-2 - d}$, Lemma \ref{projCover} combined with the estimates \eqref{form5} and \eqref{form6} yields
\begin{align*} N(\rho_{\xi}[(K_{b} \star B_{n_{v},e_{v}})^{(2)}],c_{w}\delta) & \leq [\card \rho_{\xi}(S^{n_{v}})] \cdot N\left(\rho_{\xi}(K_{b} \star B_{n_{v},e_{v}}),c_{w}\left[\frac{\delta}{\delta_{b}(n_{v}!)^{-2 - d}}\right]\right)\\
& \leq (n_{v}!)^{t(2 + d)} \cdot \left(\frac{\delta}{\delta_{b}(n_{v}!)^{-2 - d}}\right)^{-\tau} \leq \delta^{-\tau} \end{align*}
for all directions $\xi \in \{e_{w} : w \in T_{+}\}$, and the same inequality for $\delta_{b}(n_{v}!)^{-2 - d} \leq \delta \leq 1$ follows immediately from \eqref{form6}. This reasoning can be iterated to show that
\begin{equation}\label{form8} N(\rho_{\xi}[(K_{b} \star B_{n_{v},e_{v}})^{(m)}],c_{w}\delta) \leq \delta^{-\tau}, \qquad (\delta_{b}(n_{v}!)^{- d - 2})^{m} \leq \delta \leq 1, \end{equation}
for any $m \in \N$ and for all $\xi \in \{e_{w} : w \in T_{+}\}$. We are finally close to proving (IND) for the set $K_{w} := (K_{b} \star B_{n_{v},e_{v}})^{(m)}$ for some sufficiently large $m \in \N$. We remind the reader that the set $K_{w}$ is the same for all the children $w$ of $v$; also, after $K_{w} \subset K_{b}$ is constructed, it will be clearly be the smallest set (in terms of inclusion) in the augmented tree $T_{+}$. Thus, according to (IND), we should be able to prove that
\begin{equation}\label{form9} N(\rho_{\xi}(K_{w} \star B_{p,e}),c_{w}\delta) \leq \delta^{-\tau}, \qquad \delta_{w}(p!)^{-2 - d} \leq \delta \leq 1, \end{equation}
for any $p \in \N$ and for any pair of directions $e,\xi \in \{e_{w} : w \in T_{+}\}$. Here $\delta_{w} = (\delta_{b}(n_{v}!)^{-d - 2})^{m}$ is the diameter of the balls in $K_{w}$. Fix $p \in \N$ and $e,\xi \in \{e_{w} : w \in T_{+}\}$. There are only finitely many such pairs, and all the directions are rational, so it follows from the latter estimate in Lemma \ref{heart}(i) that
\begin{equation}\label{form10} N(\rho_{\xi}(B_{p,e}),\delta) \leq C_{T_{+}} \cdot \delta^{-\tau}, \qquad (p!)^{-2 - d} \leq \delta \leq 1 \end{equation}
for some constant $C_{T_{+}} > 0$ depending only on these finitely many rational configurations. Now, if we denote by $S^{n_{v},m}$ the skeleton of the set $K_{w}$, inequality \eqref{form5} and the first estimate in Lemma \ref{projCover} combine to produce the bound
\begin{equation}\label{form11} \card \rho_{\xi}(S^{n_{v},m}) \leq (n_{v}!)^{mt(2 + d)}, \qquad m \in \N. \end{equation}
Fix $\delta_{w}(p!)^{-2 - d} \leq \delta \leq 1$. If $\delta \geq \delta_{w}$, then \eqref{form9} follows immediately from \eqref{form8}. In case $\delta < \delta_{w}$ we resort to Lemma \ref{projCover} once more. This combined with \eqref{form10} and \eqref{form11} yields
\begin{align*} N(\rho_{\xi}(K_{w} \star B_{p,e}),c_{w}\delta) & \leq [\card \rho_{\xi}(S^{n_{v},m})] \cdot N\left(\rho_{\xi}(B_{p,e}),\frac{\delta}{\delta_{w}}\right)\\
& \leq (n_{v}!)^{mt(2 + d)} \cdot C_{T_{+}} \cdot \left(\frac{\delta}{\delta_{w}}\right)^{-\tau}\\
& = C_{T_{+}} \cdot (n_{v}!)^{m(2 + d)(t - \tau)} \cdot \delta^{-\tau} \end{align*}
Now, the only condition we place on $m = m_{v}$ is that $C_{T_{+}} \cdot (n_{v}!)^{m(2 + d)(t - \tau)} \leq 1$. This can be achieved, since $t < \tau$. With this choice of $m_{v}$, the set $K_{w}$ satisfies \eqref{form9} and, consequently, (IND). To finish the entire construction, there remains the minor point that the intervals $I_{w}$, $w \in T_{+}$, have to be disjoint. Recall that, for the children $w$ of $v$, the directions $e_{w}$ were at least $(n_{v}!)^{-d}$ apart. This number does not depend on $m_{v}$; on the other hand $\calH^{1}(I_{w}) = \delta_{w} = (\delta_{b}(n_{v}!)^{-2 - d})^{m}$, which can be made arbitrarily small by increasing $m_{v}$ only. 

Right after formulating the properties (i)--(vii), we demonstrated that the proof of Theorem \ref{bigEx} would be finished (except for the part about $\calH^{1}(K) > 0$) given these properties for $K$ and $E$. Now (IND) states directly that properties (i)--(vi) are in force: what about (vii)? Let $v \in T$, and let $e \in I_{v}$. During the construction of the tree $T$, there comes a point where $K_{v}$ is the smallest set in the finite subtree constructed so far: in the terms of (IND), we have $v = b$ with respect to some subtree $T_{0} \subset T$. Then (IND) applied with $n = 0$ (then $B_{n,e} = B(0,1/2)$) shows that
\begin{equation}\label{form12} N(K_{e_{v}},c_{v}\delta) \leq N(\rho_{e_{v}}(K_{v}),c_{v}\delta) \leq \delta^{-\tau}, \qquad \delta_{v} \leq \delta \leq 1. \end{equation}
Since $e \in I_{v}$, we have $|e - e_{v}| \lesssim \delta_{v}$: this implies that the number of $\delta$-intervals required to cover $K_{e_{v}}$ is comparable to the number of $\delta$-intervals required to cover $K_{e_{v}}$ for any $\delta \geq \delta_{v}$. This observation combined with \eqref{form12} proves (vii). 

We omit the proof of $\calH^{1}(K) > 0$, since it is entirely standard. For example, in \cite[\S 4.12]{Ma} there are given conditions, which guarantee that $\calH^{s}(E) > 0$ for any $s > 0$ and any 'Cantor type' set $E$. It is easy to verify that $K$ satisfies all of these conditions with $s = 1$. The proof of Theorem \ref{bigEx} is finished.

\subsection{Proof of Proposition \ref{pointMarstrand}}

Proposition \ref{pointMarstrand} is an easy consequence of a theorem of Szemer\'edi and Trotter \cite{ST} on the number of incidences between points and lines in the plane. Let us state this estimate:
\begin{thm}[Szemer\'edi-Trotter incidence bound]\label{ST} Let $P \subset \R^{2}$ be a set of $n$ points, and let $\mathcal{L}$ be a collection of $m$ lines in $\R^{2}$. Write $I(P,\mathcal{L})$ for the set of \emph{incidences} between the points in $P$ and the lines in $L$. Formally, we define
\begin{displaymath} I(P,\mathcal{L}) := \{(p,L) : p \in P, L \in \mathcal{L} \text{ and } p \in L\}. \end{displaymath}
Then
\begin{displaymath} \card I(P,\mathcal{L}) \leq A(m^{2/3}n^{2/3} + m + n), \end{displaymath} 
where $A > 0$ is an absolute constant.
\end{thm}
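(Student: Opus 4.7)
The plan is to prove Theorem \ref{ST} by Sz\'ekely's graph-theoretic argument, which reduces the incidence bound to the \emph{crossing number inequality}: for any simple graph $G$ drawn in the plane with $V$ vertices and $E \geq 4V$ edges, the crossing number satisfies $cr(G) \geq E^3/(64 V^2)$. I will take this lemma as a black box (it follows from Euler's formula together with a short probabilistic sampling argument), and use it to bound incidences.

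The construction is as follows. Discard any line $L \in \mathcal{L}$ containing at most one point of $P$; this contributes at most $m$ incidences, which is absorbed into the $m$ term. For each remaining line $L$, let $k_L \geq 2$ be the number of points of $P$ on $L$, and order these points along $L$; draw an edge (as a segment along $L$) between each consecutive pair. This produces a plane drawing of a graph $G$ with vertex set $P$, so $V = n$, and
\begin{displaymath} E = \sum_{L : k_L \geq 2} (k_L - 1) \geq \tfrac{1}{2} \sum_{L : k_L \geq 2} k_L \geq \tfrac{1}{2}\bigl( I(P,\mathcal{L}) - m \bigr). \end{displaymath}

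Next I count crossings. Two edges drawn along the same line $L$ do not cross, while two edges from distinct lines $L, L' \in \mathcal{L}$ can meet only at the unique intersection point $L \cap L'$. Hence the crossing number of this drawing is at most $\binom{m}{2} \leq m^2/2$. Now I split into two cases. If $E \geq 4V = 4n$, the crossing number lemma gives
\begin{displaymath} \frac{E^3}{64 n^2} \leq cr(G) \leq \frac{m^2}{2}, \end{displaymath}
hence $E \lesssim m^{2/3} n^{2/3}$, and therefore $I(P,\mathcal{L}) \leq 2E + m \lesssim m^{2/3} n^{2/3} + m$. If instead $E < 4n$, then directly $I(P,\mathcal{L}) \leq 2E + m \lesssim n + m$. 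Combining the two cases yields the desired bound $I(P,\mathcal{L}) \leq A(m^{2/3} n^{2/3} + m + n)$ with an absolute constant $A$.

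The only nontrivial step is the crossing number inequality, which is really where the topological content of the plane enters; everything else is bookkeeping. If one does not wish to appeal to it as a black box, the standard short proof proceeds by first using Euler's formula to show $cr(G) \geq E - 3V$ for any plane graph, and then sampling each vertex independently with probability $p = 4V/E$ and applying the inequality to the induced subgraph in expectation, which upgrades the linear bound to the cubic one. This is the only part of the argument that I would expect to require genuine care; once it is available, the incidence estimate falls out immediately from the edge-count and the trivial bound on pairwise line intersections.
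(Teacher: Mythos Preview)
Your argument is correct: this is Sz\'ekely's proof, and the bookkeeping is accurate. The graph is simple because two distinct points of $P$ lie on at most one line of $\mathcal{L}$, the edge count gives $I(P,\mathcal{L}) \leq 2E + m$, and the crossing bound $cr(G) \leq \binom{m}{2}$ together with the crossing number inequality yields the claim.

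However, there is nothing to compare against: the paper does not prove Theorem~\ref{ST}. It is stated as a black box with a reference to the original Szemer\'edi--Trotter paper \cite{ST}, and is then applied to derive Proposition~\ref{pointMarstrand}. So you have supplied a proof where the paper simply cites one. Your choice of the Sz\'ekely argument is the natural one in this context---it is short, self-contained modulo the crossing number lemma, and gives the bound with explicit constants; the original cell-decomposition proof of Szemer\'edi and Trotter is considerably longer.
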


Now we are armed to prove Proposition \ref{pointMarstrand}:

\begin{proof}[Proof of Proposition \ref{pointMarstrand}] Let $P \subset \R^{2}$ be a set with $n \geq 2$ points. Suppose that $S \subset S^{1}$ is a set of directions such that $\card S = k$ and $\card P_{e} \leq n^{s} < n$ for $e \in S$. Let $A > 0$ be the constant from Theorem \ref{ST}. If $n$ is so small that $n^{s - 1} > 1/(2A)$, the desired inequality follows from the trivial bound $k \leq n^{2} \lesssim_{s} n^{2s - 1}$. Thus, we may assume that $n^{s - 1} \leq 1/(2A)$. We apply the Szemer\'edi-Trotter estimate with the point set $P$ and the collection of lines
\begin{displaymath} \mathcal{L} := \{\rho_{e}^{-1}\{t\} : e \in S \text{ and } t \in P_{e}\}. \end{displaymath} 
Then every point $p \in P$ is incident with exactly $k$ lines, which yields 
\begin{displaymath} \card I(P,\mathcal{L}) = kn. \end{displaymath}
On the other hand, there are no more than $kn^{s}$ lines in $\mathcal{L}$, so that 
\begin{displaymath} kn = \card I(P,\mathcal{L}) \leq A[(kn^{s})^{2/3}n^{2/3} + kn^{s} + n] = A(2k^{2/3}n^{(2s + 2)/3} + kn^{s}). \end{displaymath}
Here we needed the assumption $s \geq 1/2$ in the form $n \leq k^{2/3}n^{(2s + 2)/3}$. Dividing by $k^{2/3}n$ and using the assumption $n^{s - 1} \leq 1/(2A)$ gives
\begin{displaymath} k^{1/3} \leq A(n^{(2s - 1)/3} + k^{1/3}n^{s - 1}) \leq An^{(2s - 1)/3} + \frac{k^{1/3}}{2}. \end{displaymath}
Move $k^{1/3}/2$ to the left hand side and raise everything to the third power to conclude the proof.
\end{proof}

\section{Open questions}

\begin{question} How sharp are the bounds in Theorem \ref{estimate}? In particular, is it true that
\begin{displaymath} \dim_{\p} \{e \in S^{1} : \dim_{\p} K_{e} < \dim K\} < 1, \end{displaymath}
if $\dim K < 1$? According to the estimate \eqref{kaufman} by Kaufman, this holds if the first $\dim_{\p}$ (or both $\dim_{\p}$'s) is replaced by $\dim$. What is the sharp behavior of the best bound for $\dim_{\p} \{e \in S^{1} : \dim_{\p} K_{e} \leq \sigma\}$, as $\sigma \searrow \dim K/2$? Should the bound tend to zero, as in Bourgain's estimate \eqref{bourgain}?
\end{question}

\begin{question}\label{Quest} What is the best estimate one can obtain for the \textbf{Hausdorff} dimension of the set $\{e \in S^{1} : \dim_{\p} K_{e} \leq \sigma\}$ for $\sigma < \dim K$? Peres, Simon and Solomyak make no comment on the sharpness of their bound \eqref{PSSEstimate}, and the Hausdorff dimension of the exceptional set in Theorem \ref{bigEx} is likely to equal zero.  Could it be that
\begin{displaymath} \dim \{e \in S^{1} : \dim_{\p} K_{e} < \dim K\} = 0, \qquad \dim K \leq 1? \end{displaymath}
\end{question}

\section{Acknowledgements}

I am thankful to Pertti Mattila and Esa Järvenpää for many useful comments. 

%\section{Open Questions}
%As we pointed out in the introduction, the sharpness of Theorem \ref{estimate} is not known to us. For example, is it true that
%\begin{displaymath} \dim_{\p} \{e \in S^{1} : \dim_{\p} K_{e} < \dim K\} \leq \dim K? \end{displaymath}
%This would improve the estimate \eqref{PSSEstimate} by Peres, Simon and Solomyak. Neither are we aware of the sharpness of the original estimate: is the bound $\dim \{e \in S^{1} : \dim_{\p} K_{e} < \dim K\} \leq \dim K$ optimal? If $\dim_{\p} K_{e}$ is replaced by $\dim K_{e}$, then the bound is sharp as demonstrated by a 'number theoretic' example in \cite{KM}; this example does not seem to extend to the case involving $\dim_{\p} K_{e}$. An improved bound for the Hausdorff dimension of $\{e \in S^{1} : \dim_{\p} K_{e} < \dim K\}$ would have applications in the theory of iterated function systems: if $K$ is a self-similar set in the plane containing no rotations, then $K_{e}$ is a self-similar set in $\R$, so that $\dim_{\p} K_{e} = \dim K_{e}$ for every $e \in S^{1}$. This would imply that Kaufman's bound \eqref{form1} can be improved under additional hypotheses on the set $K$.

\end{document}